\documentclass[11pt,reqno]{amsart}
\usepackage[left=1in,right=1in,top=1in,bottom=1in]{geometry}
\usepackage{empheq}
\setcounter{tocdepth}{1}
\usepackage{enumitem}
\usepackage{amssymb,amsmath,latexsym,amsfonts,amsbsy, amsthm,mathtools,graphicx,color}
\usepackage{float}
\usepackage{hyperref}
\hypersetup{
   colorlinks=true,
   citecolor=blue,
   filecolor=blue,
   linkcolor=blue,
   urlcolor=blue
}

\def\XXint#1#2#3{{\setbox0=\hbox{$#1{#2#3}{\int}$}
  \vcenter{\hbox{$#2#3$}}\kern-.5\wd0}}

\newcommand{\al}{\alpha}

\newcommand{\lda}{\lambda}
\newcommand{\om}{\Omega}
\newcommand{\pa}{\partial}
\newcommand{\va}{\varepsilon}
\newcommand{\ud}{\mathrm{d}}
\newcommand{\be}{\begin{equation}}
\newcommand{\ee}{\end{equation}}

\newcommand{\Lda}{\Lambda}

\newcommand{\A}{\mathbf{A}}
\newcommand{\cA}{\mathcal{A}}

\newcommand{\cE}{\mathcal{E}}

\newcommand{\I}{\mathbf{I}}

\newcommand{\Z}{\mathbb{Z}}

\newcommand{\m}{\mathbf{m}}

\newcommand{\n}{\mathbf{n}}

\newcommand{\OO}{\mathbb{O}}

\newcommand{\R}{\mathbb{R}}

\newcommand{\Ss}{\mathbb{S}}

\newcommand{\uu}{\mathbf{u}}

\newcommand{\vv}{\mathbf{v}}

\newcommand{\W}{\mathbf{W}}

\newcommand{\wc}{\rightharpoonup}

\newcommand{\HH}{\mathcal{H}}

\newcommand{\vp}{\varphi}

\newcommand{\T}{\mathrm{T}}
\newcommand{\ga}{\gamma}
\newcommand{\Ga}{\Gamma}

\newcommand{\ift}{\infty}
\newcommand{\wt}{\widetilde}
\newcommand{\wh}{\widehat}
\newcommand{\f}{\frac}

\newcommand{\ol}{\overline}
\newcommand{\Ra}{\Rightarrow}
\newcommand{\op}{\operatorname}

\newcommand{\na}{\nabla}

\DeclareMathOperator{\vol}{vol}
\DeclareMathOperator{\tr}{tr}

\DeclareMathOperator{\argmin}{argmin}

\def\<{\langle}\def\>{\rangle}
\def\({\left(}\def\){\right)}
\numberwithin{equation}{section}
\theoremstyle{plain}
\newtheorem{thm}{Theorem}[section]

\newtheorem{lem}[thm]{Lemma}
\newtheorem{prop}[thm]{Proposition}

\theoremstyle{definition}
\newtheorem{defn}[thm]{Definition}

\theoremstyle{remark}
\newtheorem{rem}[thm]{Remark}
\newtheorem{claim}[thm]{Claim}

\date{}

\title[Weak solutions for matrix-valued harmonic map flows]{Existence of weak solutions for two-phase \\ matrix-valued harmonic map flows}

\author{Wei Wang}
\address{School of Mathematical Sciences, Peking University, Beijing 100871, China}
\email{gjmtamag@gmail.com,\,\,2201110024@stu.pku.edu.cn}

\author{Wei Wang}
\address{School of Mathematical Sciences, Zhejiang University, Hangzhou 310058, China}
\email{wangw07@zju.edu.cn}

\author{Zhifei Zhang}
\address{School of Mathematical Sciences, Peking University, Beijing 100871, China}
\email{zfzhang@math.pku.edu.cn}

\begin{document}

\begin{abstract}
We study the existence of weak solutions to a two-phase matrix-valued harmonic
map flow with lifespan determined by that of the prescribed interface. This
system arises as the limiting equation for the matrix-valued Keller--Rubinstein--Sternberg problem studied in \cite{FLWZ23}. Our construction is based on a modified minimizing movement scheme. Precisely, we discretize time and build approximate solutions by interpolating minimizers of the associated variational problem on each time step.
\end{abstract}

\maketitle
%\tableofcontents

\section{Introduction}

Let $\om \subset \R^d$ be a bounded domain with smooth boundary, where $d \geq 2$. We study the following matrix-valued evolution system, which couples a heat flow with a geometric motion of an interface.
\begin{subequations}\label{HeatSystems}
\begin{empheq}[left=\empheqlbrace]{align}
\quad &\pa_t \A_{\pm} = \Delta \A_{\pm} - \sum_{\al=1}^d \pa_{\al}\A_{\pm}\A_{\pm}^{\T}\pa_{\al}\A_{\pm} && \text{in } \om_{\pm}(t), \label{HeatFlow}\\
&\A_+ \in \OO_+(n), \quad \A_- \in \OO_-(n) && \text{in } \om_{\pm}(t), \label{ValueOfA}\\
&\A_+^{\T}\A_- \in \mathbb{P}_n && \text{on } \Ga(t), \label{MinimalPair}\\
&\pa_{\nu}\A_+ = \pa_{\nu}\A_- && \text{on } \Ga(t), \label{NeumannJump}\\
&V= \kappa && \text{on } \Ga(t). \label{Interface}
\end{empheq}
\end{subequations}
The notations in the above system are given as bellowing:
\begin{itemize}
\item $\OO_{\pm}(n)$ denote the sets of orthogonal matrices in $\mathbb{M}_n$ with determinants $\pm 1$. We write $\OO(n):=\OO_+(n)\cup \OO_-(n)$.

\item $\mathbb{P}_n := \{\I - 2\n \otimes \n : |\n| = 1,\, \n \in \R^n\}$ is the set of reflection matrices in $\R^n$. For vectors $\n,\m \in \R^n$, the tensor product is defined by $(\n \otimes \m)_{ij} = \n_i \m_j$.

\item The condition \eqref{MinimalPair} means that $(\A_+,\A_-)$ forms a minimal pair across the interface. This property is symmetric: $(\A_+,\A_-)$ is a minimal pair if and only if $(\A_-,\A_+)$ is.

\item $\Ga(t) := \pa \om_+(t) \cap \pa \om_-(t)$ denotes the moving interface that separates the subdomains $\om_+(t)$ and $\om_-(t)$.

\item $V$, $\kappa$, and $\nu$ denote the normal velocity, the mean curvature, and the unit normal vector of $\Ga(t)$, respectively, where $\nu$ points from $\om_+(t)$ to $\om_-(t)$. Equation \eqref{Interface} states that $\Ga(t)$ evolves by mean curvature flow.
\end{itemize}

\subsection{Background and problem setting}

The system \eqref{HeatSystems} originates from the study of the matrix-valued
Keller--Rubinstein--Sternberg problem. This framework, introduced in
\cite{RSK89a,RSK89b}, concerns the asymptotic behavior of a phase-indicator map
$u_{\va}:(U\subset\R^d)\times[0,+\ift)\to\R^n$ that solves the system
\be
\pa_t u_{\va}-\Delta u_{\va}=\va^{-2}\pa_u F(u_{\va}).\label{RSKmodel}
\ee
Here, $F=F(u):\R^n\to[0,+\ift)$ is a smooth potential that vanishes in two
connected, disjoint submanifolds of $\R^n$. The original
Keller--Rubinstein--Sternberg model takes the form
\[
\pa_t v_{\va}-\va\Delta v_{\va}=\va^{-1}\pa_u F(v_{\va}),
\]
and describes chemical reaction-diffusion processes governed by a small
parameter $0<\va\ll 1$. In this formulation, the reaction term
$\va^{-1}\pa_u F(v_{\va})$ dominates, while the diffusion term
$\va\Delta v_{\va}$ is comparatively weak. Then $u_{\va}(x,t)=v_{\va}(x,\va t)$ satisfies \eqref{RSKmodel}. A canonical example
is the Allen--Cahn equation, where $n=1$ and
\[
F(u)=\f{1}{4}(1-u^2)^2
\]
is a double-well potential. More broadly, the Keller--Rubinstein--Sternberg
framework serves as a multi-phase extension of the Ginzburg--Landau model, in
which $F$ is a non-negative potential that vanishes on a single connected manifold.
We refer the reader to \cite{BK91,Che92,ESS92,LW99,LW02a,LW02b} for an
extensive treatment of both models.

When $n\geq 2$, the dynamics of \eqref{RSKmodel} become substantially more
complex, even in the static case; see \cite{FLWZ23,FWZZ15,FWZZ18,LPW12} for
further developments. Fei--Lin--Wang--Zhang \cite{FLWZ23} analyzed the asymptotic behavior of the
dynamic system
\be
\pa_t\A_{\va}=\Delta\A_{\va}-\va^{-2}(\A_{\va}\A_{\va}^{\T}\A_{\va}-\A_{\va}),
\quad\A_{\va}:\om\times[0,+\ift)\to\mathbb{M}_n,\label{RSKmodelmatrix}
\ee
which is the gradient flow of the energy functional
\[
\cE_{\va}(\A,\na\A)=\int_{\om}\(\f{1}{2}\|\na\A\|^2
+\f{1}{\va^2}F(\A)\)\ud x,\quad F(\A):=\f{1}{4}\|\A^{\T}\A-\I\|^2.
\]
Here, $ \I=\I_n $ denotes the identity matrix in $\mathbb{M}_n$, and $\|\cdot\|$
is the norm on $\mathbb{M}_n$ induced by the Frobenius inner product
\[
\mathbf{X}:\mathbf{Y}=\sum_{i,j=1}^n\mathbf{X}_{ij}\mathbf{Y}_{ij},
\quad\mathbf{X},\mathbf{Y}\in\mathbb{M}_n.
\]

The authors of \cite{FLWZ23} derived the limiting system \eqref{HeatSystems} as $\va\to 0^+$ (see also \cite{Liu25} for related equations). This is a two-phase problem consisting of a pair of harmonic map flows \eqref{HeatFlow} in the time-dependent subdomains $\om_{\pm}(t)$, coupled through the interface conditions \eqref{MinimalPair}, \eqref{NeumannJump}, and \eqref{Interface} on $\Ga(t)$.

Recall that in the one-phase setting, Chen and Struwe \cite{CS89} established a convergence result for the Ginzburg--Landau approximation to the harmonic map flow. They proved that solutions of the approximating system converge to a weak solution of the harmonic map flow, and that the convergence is smooth away from a concentration set of lower parabolic Hausdorff dimension. However, for the two-phase system, the corresponding forward convergence problem is substantially more difficult because of the added complexity of the transition between the two phases.

For this reason, \cite{FLWZ23} studied the problem from the opposite
direction. More precisely, they proved that if \eqref{HeatSystems} admits a local smooth solution, then there exists a family $ \{\A_{\va}\}_{\va>0}$ of solutions to \eqref{RSKmodelmatrix} that converges to this smooth solution as $\va\to 0^+$. This in turn raises the problem of constructing such local smooth solutions. However, in the present two-phase setting, the strong nonlinearity of the interface conditions \eqref{MinimalPair}--\eqref{NeumannJump} makes the local smooth solvability of \eqref{HeatSystems} much more difficult than that of the classical one-phase model. From this perspective, the global existence of weak solutions established in the present paper should be viewed as an alternative approach to understanding the well-posedness of \eqref{HeatSystems}.

\subsection{Weak solutions}

Before stating our main theorems, we define weak solutions of
\eqref{HeatSystems} and record several remarks that clarify our framework. In
this paper, we write $ \mathbb{S}_n $ and $ \mathbb{A}_n $ for the spaces of
symmetric and anti-symmetric $ n\times n $ matrices, respectively.

\begin{defn}[Weak solution]
For a given $ T>0 $ and a smooth family of hypersurfaces $\{\Ga(t)\}_{t\in[0,T]}$ satisfying the mean curvature flow law \eqref{Interface}, we let
\[
\om_{T,\pm}:=\bigcup_{t\in(0,T)}\om_{\pm}(t),\quad\Ga_T:=\bigcup_{t\in(0,T)}\Ga(t).
\]
 We say that $ \A=\A_{\pm}:\om_{T,\pm}\to\OO_{\pm}(n) $ is a weak solution of \eqref{HeatSystems} if the following properties hold:
\begin{enumerate}[label=$(\theenumi)$]
\item $ \pa_t\A_{\pm}\in L^2(0,T;L^2(\om_{\pm}(\cdot))) $ and
$ \na\A_{\pm}\in L^{\ift}(0,T;L^2(\om_{\pm}(\cdot))) $.
\item $ (\A_+,\A_-) $ satisfies the minimal pair condition
\eqref{MinimalPair} on $ \Ga_T $ in the sense of trace.
\item For any $ \Phi=\Phi_{\pm}\in C_0^{\ift}(\om_{T,\pm},\mathbb{M}_n) $,
$ \A=\A_{\pm} $ satisfies the interior weak formula
\be
\int_{0}^T\int_{\om_{\pm}(t)}(\pa_t\A_{\pm}\Phi_{\pm}+\na\A_{\pm}\na\Phi_{\pm}
+\na\A_{\pm}\A_{\pm}^{\T}\na\A_{\pm}\Phi_{\pm})\ud x\ud t=\mathbf{O},
\label{weakformula}
\ee
where $ \mathbf{O}=\mathbf{O}_n $ is the zero matrix in $ \mathbb{M}_n $.
\item For any $ \Psi=\Psi_{\pm}\in C_0^{\ift}(\om_{T,\pm}\cup\Ga_T,\mathbb{A}_n) $
such that
\be
\Psi_+=\Psi_-,\quad\HH^{d}\text{-a.e. on }\Ga_T,\label{BoundaryTest}
\ee
$ \A=\A_{\pm} $ satisfies the weak Neumann jump condition
\begin{align}
\sum_{\zeta=\pm}\int_0^T\int_{\om_{\zeta}(t)}\((\A_{\zeta}^{\T}\pa_t\A_{\zeta}):\Psi_{\zeta}+(\A_{\zeta}^{\T}\na\A_{\zeta}):\na\Psi_{\zeta}\)\ud x\ud t&=0,\label{WeakNeumann}\\
\sum_{\zeta=\pm}\int_0^T\int_{\om_{\zeta}(t)}\((\pa_t\A_{\zeta}\A_{\zeta}^{\T}):\Psi_{\zeta}+(\na\A_{\zeta}\A_{\zeta}^{\T}):\na\Psi_{\zeta}\)\ud x\ud t&=0.\label{WeakNeumann1}
\end{align}
\end{enumerate}
\end{defn}

\begin{rem}\label{InterfaceGiven}
The evolution of $\Ga(t)$ (i.e. \eqref{Interface}) is actually the well-known mean curvature flow, which is independent of $\A_\pm(t)$ and can be decoupled with \eqref{HeatFlow}-\eqref{NeumannJump}.  Therefore, in our setting, $ \Ga(t) $ is not an unknown free boundary produced by the limit $ \va\to 0 $. Rather, the interface is prescribed in advance. The main goal of this paper is to study the wellposedness of the system for $(\A_+, \A_-)$ for given $\Gamma(t)$ which evolves according to the mean curvature flow.

\end{rem}

\begin{rem}\label{EquivalenceRemarks}
Equations
\eqref{WeakNeumann} and \eqref{WeakNeumann1} provide a weak formulation of the following Neumann jump conditions:
\begin{align*}
  \A_+^{\T}\pa_{\nu}\A_+=\A_-^{\T}\pa_{\nu}\A_-,\qquad \pa_{\nu}\A_+\A_+^{\T}=\pa_{\nu}\A_-\A_-^{\T},
\end{align*}
which is equivalent to \eqref{NeumannJump} under condition \eqref{MinimalPair}. We refer to Lemma \ref{lemNeumann1} for details.
\end{rem}

\begin{rem}\label{RelationWeak}
Next, we explain that \eqref{WeakNeumann} implies \eqref{weakformula}. Fix
$ \Phi=\Phi_{\pm}\in C_0^{\ift}(\om_{T,\pm},\mathbb{M}_n) $. For any
$ \mathbf{X}=\mathbf{X}_{\pm}\in C_0^{\ift}(\om_{T,\pm},\mathbb{M}_n) $, we
decompose $ \mathbf{X} $ into its symmetric and anti-symmetric parts by
\[
\mathbf{X}=\f{\mathbf{X}+\mathbf{X}^{\T}}{2}+\f{\mathbf{X}-\mathbf{X}^{\T}}{2}
:=\mathbf{X}^0+\mathbf{X}^1.
\]
Then
\[
\mathbf{X}_{\pm}^0\in C_0^{\ift}(\om_{T,\pm},\mathbb{S}_n),\quad
\mathbf{X}_{\pm}^1\in C_0^{\ift}(\om_{T,\pm},\mathbb{A}_n).
\]
Since $ \A_{\pm}\in\OO_{\pm}(n) $, we have that
$ \A_{\zeta}^{\T}\pa_t\A_{\zeta} $ and
$ \A_{\zeta}^{\T}\pa_{\al}\A_{\zeta} $ are anti-symmetric for
$ \al\in\Z\cap[1,d] $. Therefore, \eqref{WeakNeumann} gives
\be
\begin{aligned}
&\sum_{\zeta=\pm}\int_{0}^{T}\int_{\om_{\zeta}(t)}
\((\A_{\zeta}^{\T}\pa_t\A_{\zeta}):\mathbf{X}_{\zeta}
+(\A_{\zeta}^{\T}\na\A_{\zeta}):\na\mathbf{X}_{\zeta}\)\ud x\ud t\\
&\quad=
\sum_{i=0,1}\sum_{\zeta=\pm}\int_{0}^{T}\int_{\om_{\zeta}(t)}
\((\A_{\zeta}^{\T}\pa_t\A_{\zeta}):\mathbf{X}_{\zeta}^i
+(\A_{\zeta}^{\T}\na\A_{\zeta}):\na\mathbf{X}_{\zeta}^i\)\ud x\ud t,
\end{aligned}\label{eq5}
\ee
where \eqref{BoundaryTest} is automatic because $ \mathbf{X} $ is compactly
supported. Let $ \phi=\phi_{\pm}\in C_0^{\ift}(\om_{T,\pm},\R) $. For
$ 1\leq i,j\leq n $, let $ \mathbf{E}_{ij} $ denote the $ n\times n $-matrix
whose entry $ (i,j) $ is $ 1 $ and whose other entries are $ 0 $. Taking
\[
\mathbf{X}=\mathbf{E}_{ij}\phi
\]
as a test function in \eqref{eq5}, we obtain that for any $ i,j\in\Z\cap[1,n] $,
\[
\sum_{\zeta=\pm}\int_{0}^{T}\int_{\om_{\zeta}(t)}
\left[(\pa_t\A_{\zeta}\A_{\zeta}^{\T})_{ij}\phi+(\na\A_{\zeta}\A_{\zeta}^{\T})_{ij}\na\phi\right]\ud x\ud t=0.
\]
Fix $ i,j,k $. Using a standard approximation argument, we may take
$ \phi=(\A_{\pm}\Phi_{\pm})_{jk} $ and deduce that
\[
\sum_{\zeta=\pm}\int_{0}^{T}\int_{\om_{\zeta}(t)}
\left\{(\pa_t\A_{\zeta}\A_{\zeta}^{\T})_{ij}(\A_{\zeta}\Phi_{\zeta})_{jk}
+(\na\A_{\zeta}\A_{\zeta}^{\T})_{ij}
\na[(\A_{\zeta}\Phi_{\zeta})_{jk}]\right\}\ud x\ud t=0.
\]
Summing over $ j $ from $ 1 $ to $ n $, it follows that for any
$ i,k\in\Z\cap[1,n] $,
\[
\sum_{\zeta=\pm}\sum_{j=1}^n\int_{0}^{T}\int_{\om_{\zeta}(t)}
\left\{(\pa_t\A_{\zeta}\A_{\zeta}^{\T})_{ij}(\A_{\zeta}\Phi_{\zeta})_{jk}
+(\na\A_{\zeta}\A_{\zeta}^{\T})_{ij}
\na[(\A_{\zeta}\Phi_{\zeta})_{jk}]\right\}\ud x\ud t=0,
\]
which implies \eqref{weakformula}.
\end{rem}

\subsection{Main results}

We now state our main existence theorems. We first introduce the class of
admissible initial data. Let $ \Ga=\pa\om_+\cap\pa\om_- $ and
$ \om=\om_+\cup\om_-\cup\Ga $. We define
\be
\mathcal{A}_{\Ga}
=\{\A=\A_{\pm}\in H^1(\om_{\pm},\OO_{\pm}(n)):\A_+^{\T}\A_-\in\mathbb{P}_n
\text{ on }\Ga\}.\label{FunctionalSet}
\ee

We begin with the case in which the interface is stationary.

\begin{thm}[Existence of weak solutions, $\op{I}$]\label{ExistenceFix}
Let $ \om=\om_+\cup\om_-\cup\Ga\subset\R^d $ be a smooth domain. Assume that
$ \Ga(t)=\Ga $ is a stationary minimal hypersurface. Then for any $ T>0 $ and
any $ \A_0=\A_{0,\pm}\in\mathcal{A}_{\Ga} $, there exists a weak solution
$ \A=\A_{\pm}:\om_{T,\pm}=\om_{\pm}\times(0,T)\to\OO_{\pm}(n) $ of
\eqref{HeatSystems} with initial trace $ \A(0)=\A_0 $ such that
\be
\lim_{t\to 0}\|\A_{\pm}(\cdot,t)-\A_{0,\pm}(\cdot)\|_{L^2(\om_{\pm},\mathbb{M}_n)}=0.\label{initial1}
\ee
\end{thm}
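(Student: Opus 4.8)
The plan is to implement a modified minimizing movement (De Giorgi) scheme adapted to the two-phase structure. Fix a time step $h = T/N$ and define iteratively a sequence $\A^k = \A^k_{\pm} \in \cA_{\Ga}$ by setting $\A^0 = \A_0$ and, given $\A^{k-1}$, letting $\A^k$ be a minimizer of the functional
\be
\cF_h(\A;\A^{k-1}) := \f{1}{2}\sum_{\zeta=\pm}\int_{\om_{\zeta}}\|\na\A_{\zeta}\|^2\ud x + \f{1}{2h}\sum_{\zeta=\pm}\int_{\om_{\zeta}}\|\A_{\zeta}-\A^{k-1}_{\zeta}\|^2\ud x
\ee
over the admissible class $\cA_{\Ga}$. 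Existence of a minimizer at each step follows from the direct method: the class $\cA_{\Ga}$ is weakly closed in $H^1(\om_+) \times H^1(\om_-)$ because the pointwise orthogonality constraints $\A_{\zeta}^{\T}\A_{\zeta} = \I$ and the trace constraint $\A_+^{\T}\A_- \in \mathbb{P}_n$ on $\Ga$ are preserved under the compact Sobolev trace embedding together with a.e. convergence along subsequences; note $\cA_{\Ga}$ is nonempty (it contains $\A^{k-1}$) and the functional is coercive and weakly lower semicontinuous.

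Next I would derive the Euler--Lagrange equation for each minimizer $\A^k$. Using that the tangent space to $\OO_{\pm}(n)$ at $\A$ consists of matrices of the form $\A\,\B$ with $\B \in \bA_n$ (equivalently $\B'\A$ with $\B' \in \bA_n$), and parametrizing admissible variations by $t \mapsto \A^k \exp(t\Psi)$ with $\Psi = \Psi_{\pm} \in C_0^{\ift}(\om_{\pm} \cup \Ga, \bA_n)$ subject to the linearized interface constraint — which is precisely the condition $(\Psi_+ - \Psi_-) \perp \mathbb{V}_4$ on $\Ga$, matching \eqref{BoundaryTest} — I obtain the discrete weak Neumann-jump identity
\be
\sum_{\zeta=\pm}\int_{\om_{\zeta}}\(\f{\A^k_{\zeta}-\A^{k-1}_{\zeta}}{h}(\A^k_{\zeta})^{\T}:\Psi_{\zeta} + \na\A^k_{\zeta}(\A^k_{\zeta})^{\T}:\na\Psi_{\zeta}\)\ud x = 0,
\ee
which is the time-discrete analogue of \eqref{WeakNeumann}. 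By Remark \ref{RelationWeak}, the scalar/interior weak formula \eqref{weakformula} is then a consequence, so I only need to control this single family of identities. Testing the minimality of $\A^k$ against the competitor $\A^{k-1}$ gives the basic energy inequality
\be
\f{1}{2}\sum_{\zeta=\pm}\int_{\om_{\zeta}}\|\na\A^k_{\zeta}\|^2\ud x + \f{1}{2h}\sum_{\zeta=\pm}\int_{\om_{\zeta}}\|\A^k_{\zeta}-\A^{k-1}_{\zeta}\|^2\ud x \le \f{1}{2}\sum_{\zeta=\pm}\int_{\om_{\zeta}}\|\na\A^{k-1}_{\zeta}\|^2\ud x,
\ee
and summing in $k$ yields, uniformly in $h$, an $L^\ift_t H^1_x$ bound on the piecewise-constant interpolant $\ol{\A}^h$ and an $L^2_{t,x}$ bound on the discrete time derivative — i.e. $h^{-1/2}$-weighted control of increments — which is exactly the regularity listed in items (1)--(2) of the weak-solution definition.

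Then I would pass to the limit $h \to 0$. Define the piecewise-constant interpolant $\ol{\A}^h(t) = \A^k$ for $t \in ((k-1)h, kh]$ and the piecewise-affine interpolant $\wh{\A}^h$; the energy estimates give $\wh{\A}^h$ bounded in $H^1(0,T;L^2) \cap L^\ift(0,T;H^1)$ and $\|\ol{\A}^h - \wh{\A}^h\|_{L^2_{t,x}} = O(h^{1/2})$. By Aubin--Lions (the embedding $H^1(\om_\pm) \hookrightarrow L^2(\om_\pm)$ is compact), along a subsequence $\wh{\A}^h \to \A$ strongly in $L^2(0,T;L^2(\om_\pm))$ and a.e., $\na\ol{\A}^h \wc \na\A$ weakly-$*$ in $L^\ift_t L^2_x$, and $\pa_t\wh{\A}^h \wc \pa_t\A$ weakly in $L^2_{t,x}$. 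Strong $L^2$ convergence and the uniform $H^1$ bound upgrade, by interpolation, to strong convergence in $L^2(0,T;H^s(\om_\pm))$ for $s<1$, hence strong convergence of traces on $\Ga_T$ in $L^2$; this preserves the pointwise constraint $\A_\zeta \in \OO_\zeta(n)$ (via a.e. convergence) and the minimal-pair condition \eqref{MinimalPair} on $\Ga_T$, giving items (2)--(3) of the definition. Summing the discrete Neumann-jump identities against a test function and using strong $L^2$-convergence of $\ol{\A}^h$ (together with weak convergence of $\na\ol{\A}^h$ and $\pa_t\wh{\A}^h$) lets me pass to the limit in the quadratic terms $\na\A^k_\zeta(\A^k_\zeta)^{\T}$ and $h^{-1}(\A^k_\zeta - \A^{k-1}_\zeta)(\A^k_\zeta)^{\T}$, yielding \eqref{WeakNeumann} for $\A$; the initial condition \eqref{initial1} follows from the $O(h^{1/2})$ increment bound, which forces time-continuity into $L^2$ at $t=0$ in the limit.

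\emph{Main obstacle.} The delicate point is the passage to the limit in the nonlinear terms \emph{near the interface} $\Ga_T$, where the test function $\Psi$ need not be compactly supported in $\om_{T,\pm}$. One must verify that the discrete Euler--Lagrange equation genuinely carries \emph{no} boundary contribution on $\Ga$ beyond what the constraint $(\Psi_+ - \Psi_-)\perp\mathbb{V}_4$ encodes — this is where the equivalence \ref{M1}--\ref{M4} of Remark \ref{EquivalenceRemarks} is essential, and it must be shown to survive discretization. A second subtlety is that the admissible variations $\A^k\exp(t\Psi)$ must remain in $\cA_\Ga$, i.e. the nonlinear trace constraint $\A_+^{\T}\A_- \in \mathbb{P}_n$ must be exactly preserved along the variation; this requires choosing variations of the specific form $\A_+ \mapsto \A_+ e^{t\Psi_+}$, $\A_- \mapsto \A_- e^{t\Psi_-}$ with $(\A_+ e^{t\Psi_+})^{\T}(\A_- e^{t\Psi_-}) = e^{-t\Psi_+}(\A_+^{\T}\A_-)e^{t\Psi_-}$ staying in $\mathbb{P}_n$, and analyzing the linearization of $\mathbb{P}_n$ — the reflection manifold — at a given point, which is exactly the computation behind the definition of $\mathbb{V}_4$ in \eqref{V1-5}. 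Finally, since the minimizers $\A^k$ are only $H^1$, all the interface manipulations must be done at the level of traces and weak formulations rather than pointwise, so some care with density arguments for the test functions is needed.
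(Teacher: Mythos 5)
Your proposal follows the paper's proof very closely: same modified minimizing movement scheme with the same discrete functional, same Euler--Lagrange identity, same energy/increment estimates, same piecewise-constant ($\ol{\A}^h$) and piecewise-affine ($\wh{\A}^h$) interpolants and compactness, and essentially the same passage to the limit in the discrete weak Neumann identity and in the initial condition. The one place where you genuinely diverge is the preservation of the minimal-pair constraint \eqref{MinimalPair} in the limit. You argue by spatial interpolation: since $\ol{\A}^h$ is uniformly bounded in $L^\infty(0,T;H^1(\om_\pm))$ and $\ol{\A}^h\to\A$ strongly in $L^2(0,T;L^2(\om_\pm))$, interpolation gives strong convergence in $L^2(0,T;H^s(\om_\pm))$ for any $s<1$, hence (taking $s\in(1/2,1)$) strong $L^2(\Ga_T)$-trace convergence and, up to a subsequence, a.e.\ convergence of traces; since $\ol{\A}^h_{+}{}^{\T}\ol{\A}^h_{-}\in\mathbb{P}_n$ a.e.\ on $\Ga_T$ and $\mathbb{P}_n$ is closed, the limit inherits the constraint. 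The paper instead introduces the $\lambda$-interpolant $\A_{\lda,h}$, which is genuinely $H^1$ in space-time, applies the trace theorem on the product domain, shows the minimal-pair condition persists on a set of measure $\geq\lda\,\HH^{d}(\Ga\times(0,T))$, and then lets $\lda\to1^-$. Your interpolation shortcut is cleaner for the fixed-interface case and dispenses with the auxiliary $\lda$-family and the limsup-of-sets argument; the paper's approach trades a bit of elegance for a construction that also organizes the moving-interface proof of Theorem \ref{ExistenceNonFix}. One small precision worth making explicit: apply the interpolation argument to $\ol{\A}^h$ (which both satisfies the pointwise $\OO_\pm(n)$ and interface constraints and carries the uniform $H^1$ bound), not to the affine interpolant $\wh{\A}^h$, which takes values outside $\OO_\pm(n)$ and does not satisfy \eqref{MinimalPair} in its interior time intervals; the $O(h^{1/2})$ comparison lets you transfer the strong $L^2_{t,x}$ convergence from one interpolant to the other, but the constraint itself lives on $\ol{\A}^h$.
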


We next turn to the case of a moving interface.

\begin{thm}[Existence of weak solutions, $\op{II}$]\label{ExistenceNonFix}
Let $ \om=\om_+(0)\cup\om_-(0)\cup\Ga(0)\subset\R^d $ be a smooth domain. Assume that $ \Ga(t) $ evolves smoothly and has lifespan $ T_0>0 $, in the sense that $ \Ga(t) $ remains regular on $ [0,T_0) $. Then for any $ 0<T<T_0 $ and any $ \A_0=\A_{0,\pm}\in\mathcal{A}_{\Ga(0)} $, there exists a weak solution $ \A=\A_{\pm}:\om_{T,\pm}\to\OO_{\pm}(n) $ of \eqref{HeatSystems} with initial trace $ \A(0)=\A_0 $. In particular, for $ t>0 $ sufficiently small, there is a diffeomorphism $ X_{\pm}(\cdot,t):\om_{\pm}(0)\to\om_{\pm}(t) $ and
\be
\lim_{t\to 0}\sum_{\zeta=\pm}\|\A_{\zeta}(X_{\pm}(\cdot,t),t)-\A_{0,\zeta}(\cdot)\|_{L^2(\om_{\zeta}(0),\mathbb{M}_n)}=0.\label{initial2}
\ee
\end{thm}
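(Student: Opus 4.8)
The plan is to prove Theorem~\ref{ExistenceNonFix} by adapting the modified minimizing-movements scheme used for Theorem~\ref{ExistenceFix}, with one extra layer to absorb the moving interface. The guiding principle is that, formally, the system \eqref{HeatSystems} is the $L^2$-gradient flow of the Dirichlet energy $\A\mapsto\tfrac12\sum_\pm\int_{\om_\pm(t)}\|\na\A_\pm\|^2\,\ud x$ over the class $\cA_{\Ga(t)}$ of \eqref{FunctionalSet}, with the minimal-pair relation \eqref{MinimalPair} the \emph{essential} constraint and the Neumann jump \eqref{NeumannJump} the associated \emph{natural} boundary condition; this is precisely why a scheme enforcing only the essential constraint produces \eqref{WeakNeumann} in the limit. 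Since $\Ga(t)$ solves \eqref{Interface} and stays regular on $[0,T_0)$, for $T<T_0$ one first fixes a smooth family of diffeomorphisms $\Phi_t\colon\om\to\om$, $t\in[0,T]$, with $\Phi_0=\id$, $\Phi_t(\om_\pm(0))=\om_\pm(t)$, $\Phi_t(\Ga(0))=\Ga(t)$, and $D\Phi_t,\pa_t\Phi_t$ uniformly bounded. The change of variables $\A\mapsto\A\circ\Phi_t$ then converts the moving-domain problem into one on the fixed domain $\om=\om_+(0)\cup\om_-(0)\cup\Ga(0)$ with smoothly $t$-dependent coefficients: the Dirichlet energy becomes a functional $\widetilde{E}_t$ with integrand a uniformly elliptic, smoothly $t$-dependent perturbation of $\tfrac12\|\na(\cdot)\|^2$, and the constraint class $\cA_{\Ga(t)}$ becomes the single fixed class $\cA_{\Ga(0)}$.

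Partition $[0,T]$ by $t_k=kh$, $h=T/N$, set $\B^0=\A_0$, and for each $k$ let $\B^{k+1}$ minimize $\B\mapsto\widetilde{E}_{t_{k+1}}(\B)+\tfrac{1}{2h}\int_\om\|\B-\B^k\|^2\,\ud x$ over $\cA_{\Ga(0)}$; setting $\A^{k+1}=\B^{k+1}\circ\Phi_{t_{k+1}}^{-1}\in\cA_{\Ga(t_{k+1})}$, this is, on the moving domains, the implicit Euler step with the previous configuration transported to $\om_\pm(t_{k+1})$. Existence follows from the direct method, exactly as for Theorem~\ref{ExistenceFix}: the functional is $H^1$-coercive and weakly lower semicontinuous, and $\cA_{\Ga(0)}$ is weakly $H^1$-closed because $\OO_\pm(n)$ is closed (so the pointwise constraint survives a.e.\ convergence along a subsequence) while the trace map $H^1(\om_\pm)\to L^2(\Ga(0))$ is compact, so $\B_+^\T\B_-\in\mathbb{P}_n$ on $\Ga(0)$ passes to the limit since $\mathbb{P}_n$ is closed. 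Testing minimality with $\B_s=e^{s\Xi}\B$ for antisymmetric-matrix-valued $\Xi=\Xi_\pm$ gives the discrete Euler--Lagrange identity: for $\Xi$ compactly supported in $\om_\pm$ one recovers (undoing the pullback) the discrete analogue of \eqref{weakformula}, while for $\Xi$ reaching $\Ga(0)$ the variation must preserve the minimal-pair constraint, and differentiating $\B_+^\T e^{-s\Xi_+}e^{s\Xi_-}\B_-\in\mathbb{P}_n$ at $s=0$ shows the admissible boundary variations are exactly those with $(\Xi_+-\Xi_-)\perp\mathbb{V}_4$ on $\Ga(0)$ --- precisely the class \eqref{BoundaryTest}. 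The basic estimate $\widetilde{E}_{t_{k+1}}(\B^{k+1})+\tfrac{1}{2h}\|\B^{k+1}-\B^k\|_{L^2}^2\le\widetilde{E}_{t_{k+1}}(\B^k)\le(1+Ch)\widetilde{E}_{t_k}(\B^k)$, the last step from $|\pa_t\widetilde{E}_t|\le C\widetilde{E}_t$, yields by discrete Gronwall $\sup_k\widetilde{E}_{t_k}(\B^k)\le e^{CT}\widetilde{E}_0(\A_0)$ and $\sum_k h^{-1}\|\B^{k+1}-\B^k\|_{L^2}^2\le Ce^{CT}\widetilde{E}_0(\A_0)$.

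From $\{\B^k\}$ form the piecewise-constant interpolant $\ol{\B}^{(N)}$ and the piecewise-linear interpolant $\B^{(N)}$ on $\om\times[0,T]$ (the latter is not orthogonal-valued but serves only as an auxiliary carrier of the time derivative --- this is the ``modified'' feature of the scheme). The bounds give $\na\ol{\B}^{(N)}$ bounded in $L^\ift(0,T;L^2)$ and $\pa_t\B^{(N)}$ bounded in $L^2(0,T;L^2)$, and $\|\ol{\B}^{(N)}-\B^{(N)}\|_{L^2(0,T;L^2)}^2\le h^2\sum_k h^{-1}\|\B^{k+1}-\B^k\|_{L^2}^2\to0$; since orthogonal matrices have fixed Frobenius norm, $\B^{(N)}$ is bounded in $L^2(0,T;H^1)\cap H^1(0,T;L^2)$, so by Aubin--Lions a subsequence satisfies $\ol{\B}^{(N)},\B^{(N)}\to\B$ strongly in $L^2$ and a.e., $\na\ol{\B}^{(N)}\wc\na\B$ and $\pa_t\B^{(N)}\wc\pa_t\B$ weakly in $L^2$. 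As $\ol{\B}^{(N)}$ is bounded, the products $\na\ol{\B}^{(N)}(\ol{\B}^{(N)})^\T$ and $\pa_t\B^{(N)}(\ol{\B}^{(N)})^\T$ converge (weak $\times$ strong) to $\na\B\,\B^\T$ and $\pa_t\B\,\B^\T$, while the pulled-back coefficients at $t_{k+1}$ converge uniformly to those at $t$; passing to the limit in the discrete Euler--Lagrange identity and pushing forward by $\Phi_t$ gives \eqref{WeakNeumann} for $\A(\cdot,t):=\B(\cdot,t)\circ\Phi_t^{-1}$, hence \eqref{weakformula} by Remark~\ref{RelationWeak}. The $\OO_\pm(n)$-valuedness and the minimal-pair condition on $\Ga_T$ pass to the limit by the same a.e./compact-trace arguments, and \eqref{initial2} follows from $\sum_k\|\B^{k+1}-\B^k\|_{L^2}^2\le Ch$ (so $\|\B^{(N)}(t)-\A_0\|_{L^2}\le(Ct)^{1/2}$ uniformly in $N$) together with the continuity of $t\mapsto\Phi_t$.

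I expect the main obstacle to be the interaction of the moving domain with the interface condition. Two points need care. First, one must build $\Phi_t$ respecting the $\om_\pm$-decomposition and track how every object --- the energy, the admissible test functions, and in particular the space $\mathbb{V}_4$ and the orthogonality condition \eqref{BoundaryTest} attached to $\Ga(t)$ --- transforms under pullback, so that the $O(h)$ discretization errors genuinely sum to $o(1)$ and the limiting identities are the \emph{un-pulled-back} ones of the definition of weak solution rather than their perturbed analogues. Second, and more delicate, is the identification of the admissible variations near $\Ga$ with $(\Xi_+-\Xi_-)\perp\mathbb{V}_4$: this is exactly where the nonconvex constraint \eqref{MinimalPair} enters, and it relies on the algebraic structure of minimal pairs (the tangent space of $\mathbb{P}_n$ and the description of $\mathbb{V}_4$; cf.\ Remark~\ref{EquivalenceRemarks} and \cite{FLWZ23}). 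Establishing that this structure is stable under weak $H^1$ limits --- so that \eqref{BoundaryTest} still makes sense for the limit $\A$ and the limiting identity is exactly \eqref{WeakNeumann} --- is the crux of the argument.
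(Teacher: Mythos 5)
Your high-level architecture matches the paper's: a minimizing-movements scheme, a family of diffeomorphisms between the moving domains $\om_\pm(t)$ to localize the problem, an auxiliary piecewise-linear interpolant carrying the time derivative, Aubin--Lions compactness, and the variational identification of admissible boundary variations with $(\Xi_+-\Xi_-)\perp\mathbb{V}_4$. The cosmetic differences --- you pull everything back to the single fixed domain $\om_\pm(0)$ via one global family $\Phi_t$ while the paper uses local-in-time maps $\Phi_h^m(\cdot,t)\colon\om_\pm(t)\to\om_\pm(mh)$ and minimizes directly on $\om_\pm(t_{m+1})$ over $\cA_{\Ga(t_{m+1})}$ --- would not, by themselves, break the argument.

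The genuine gap is in the functional you minimize at each step. You take
\[
\B\ \longmapsto\ \widetilde{E}_{t_{k+1}}(\B)+\frac{1}{2h}\int_\om\|\B-\B^k\|^2\,\ud y
\]
on the fixed domain, i.e.\ (pushing forward) a Dirichlet energy plus the penalty $\frac{1}{2h}\|\A^{k+1}\circ\Phi_{t_{k+1}}-\A^{k}\circ\Phi_{t_k}\|_{L^2(\om)}^2$. That penalty is a discrete \emph{Lagrangian} time increment: as $h\to0$ its Euler--Lagrange contribution converges to the material derivative $\pa_t\A+\vv\cdot\na\A$, where $\vv=\pa_t\Phi_t\circ\Phi_t^{-1}$ is the velocity of the flow map. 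The resulting limit equation is therefore $\pa_t\A+\vv\cdot\na\A=\Delta\A+\na\A\,\A^\T\na\A$, which is \emph{not} \eqref{HeatFlow}: there is a spurious $O(1)$ advection term. This is not an $O(h)$ discretization error that can be summed to $o(1)$; it is a systematic defect of the scheme. The paper's functional $E_h(\mathbf{V},\wt{\A};\A)$ in \eqref{FunctionalProblem} contains exactly the extra linear term $2(\mathbf{V}\cdot\na\wt{\A}):(\A-\wt{\A})$, with $\mathbf{V}=\pa_t\Phi_h^m(\cdot,t_{m+1}^-)$, whose sole purpose is to cancel this advection: writing $\wt{\A}^m(x)=\A^m(\Phi^m(x,t_{m+1}))\approx\A^m(x)+h\,\mathbf{V}\cdot\na\A^m(x)$, one checks that $h^{-1}(\A^{m+1}-\wt{\A}^m)+\mathbf{V}\cdot\na\wt{\A}^m\to\pa_t\A$ rather than $\pa_t\A-\mathbf{V}\cdot\na\A$. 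The paper explicitly flags this correction term as the new ingredient needed for the moving-interface case (absent from \cite{AGS08,LX16} and from the toy model). Your proposal, as written, omits it and hence proves existence for the wrong limiting equation. To repair the argument you would need to add the analogous first-order correction to your pulled-back functional (or equivalently replace the $L^2$ penalty by one that penalizes the Eulerian rather than the Lagrangian increment) and then verify, as the paper does in the estimates $I_h^m(1),\dots,I_h^m(4)$ and the $\mathbf{C}_h,\mathbf{D}_h$ decomposition, that this additional term is controlled and vanishes where it should in the limit.
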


\begin{rem}
The weak solutions constructed in Theorems \ref{ExistenceFix} and
\ref{ExistenceNonFix} are not expected to be unique. This is already the case for the one-phase harmonic map flow, where the non-uniqueness of weak solutions was shown in \cite{Cor90}. The same phenomenon should therefore be expected for the system \eqref{HeatSystems}.
\end{rem}

\begin{rem}
The weak solutions constructed in Theorems \ref{ExistenceFix} and
\ref{ExistenceNonFix} exist for the full lifespan of the prescribed interface.
In Theorem \ref{ExistenceFix}, the interface is stationary, and hence a weak solution exists for every $ T>0 $. In Theorem \ref{ExistenceNonFix}, the interface $ \Ga(t) $ exists smoothly up to time $ T_0 $, and the weak solution is constructed on each interval $ (0,T) $ with $ 0<T<T_0 $. In this sense, the lifespan of the weak solution is limited only by that of the evolving
interface.
\end{rem}

\subsection{Difficulties, strategies, and novelty}

Establishing the existence of weak solutions to the matrix-valued two-phase harmonic map flow \eqref{HeatSystems} involves substantial difficulties. In the one-phase setting, Chen and Struwe \cite{CS89} constructed weak solutions using a Ginzburg--Landau approximation. However, for the present two-phase problem, this strategy is not directly available. The evolving interface $ \Ga(t) $ and the coupling conditions on $ \Ga(t) $, in particular the minimal pair condition \eqref{MinimalPair}, make the interaction between the two phases much more difficult to control. As a result, it is difficult to derive the convergence of solutions $ \A_{\va} $ of the Keller--Rubinstein--Sternberg model \eqref{RSKmodelmatrix} directly to a solution of \eqref{HeatSystems}.

As noted above, \cite{FLWZ23} instead considered the reverse problem: assuming
the existence of a sufficiently regular solution of the limiting system, they
constructed approximating solutions $ \A_{\va} $ of \eqref{RSKmodelmatrix} that converge to it. This further indicates the difficulty of applying Ginzburg--Landau-type methods directly in the present setting. In particular, the construction of a Chen--Struwe-type weak solution of \eqref{HeatSystems} by passing to the limit in \eqref{RSKmodelmatrix} remains open.

To overcome this difficulty, we use a minimizing movement scheme, following the abstract framework of \cite{AGS08} and the concrete example in \cite{LX16}. We discretize the time interval, solve a variational problem on each time step, and then interpolate the discrete solutions to obtain an approximate evolution. Passing to the limit as the time step tends to zero yields a weak solution.

Before turning to the full two-phase problem, we first discuss a toy model for
global weak heat flows into spheres. This one-phase example illustrates the
basic idea of the minimizing movement scheme and clarifies the changes needed in the proof of the two-phase case.

\subsubsection{A toy model: harmonic map flows into unit spheres}

To explain the minimizing movement scheme in a simpler setting, we first
consider the harmonic map flow into the unit sphere $ \Ss^{n-1}\subset\R^n $, where $ n\geq 2 $. Let $ (M,g) $ be a compact smooth Riemannian manifold of dimension $d\geq 2$ without boundary, and let $ \uu_0\in H^1(M,\Ss^{n-1}) $. We study the initial value problem
\be
\left\{\begin{aligned}
&\pa_t\uu=\Delta_g\uu+|\na\uu|^2\uu&\text{in }&M\times\R_+,\\
&\uu(\cdot,0)=\uu_0&\text{on }&M,
\end{aligned}\right.\label{HeatFlowSimple}
\ee
for a map $ \uu:M\times\R_+\to\Ss^{n-1} $. We will construct weak solutions using
a discrete-time minimizing movement scheme. The result is as follows.

\begin{prop}\label{proptoymodel}
For $ \uu_0\in H^1(M,\Ss^{n-1}) $, there exists a weak solution
$ \uu:M\times\R_+\to\Ss^{n-1} $ of \eqref{HeatFlowSimple} such that
$ \na\uu\in L^{\ift}(\R_+;L^2(M)) $ and $ \pa_t\uu\in L^2(\R_+;L^2(M)) $. In particular,
\be
\lim_{t\to 0^+}\|\uu(\cdot,t)-\uu_0\|_{L^2(M)}=0.\label{initialdateSd1}
\ee
\end{prop}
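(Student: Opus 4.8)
The plan is to construct the weak solution via the discrete-time minimizing movement scheme (also called the implicit Euler / De Giorgi minimizing movement scheme). Fix a time step $ h>0 $ and set $ t_k = kh $. Starting from $ \uu_h^0 = \uu_0 $, inductively define $ \uu_h^{k+1} $ as a minimizer of the functional
\be
J_h^k(\vv) = \f{1}{2h}\int_M |\vv - \uu_h^k|^2\,\ud\vol_g + \f{1}{2}\int_M |\na\vv|^2\,\ud\vol_g
\ee
over the set $ H^1(M,\Ss^{d-1}) = \{\vv\in H^1(M,\R^d): |\vv|=1 \text{ a.e.}\} $. Existence of a minimizer follows from the direct method: the constraint set is weakly closed in $ H^1 $ (since $ |\vv|=1 $ a.e. is preserved under strong $ L^2 $ convergence, hence along a subsequence of a weakly $ H^1 $-convergent sequence), and $ J_h^k $ is coercive and weakly lower semicontinuous. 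First I would record the two basic a priori estimates. Comparing $ \uu_h^{k+1} $ with the competitor $ \uu_h^k $ gives the energy–dissipation inequality
\be
\f{1}{2h}\int_M |\uu_h^{k+1}-\uu_h^k|^2\,\ud\vol_g + E(\uu_h^{k+1}) \leq E(\uu_h^k),\qquad E(\vv):=\tfrac{1}{2}\int_M|\na\vv|^2,
\ee
so $ E(\uu_h^k) \leq E(\uu_0) $ for all $ k $, and summing in $ k $ yields $ \sum_k \f{1}{h}\|\uu_h^{k+1}-\uu_h^k\|_{L^2}^2 \leq 2E(\uu_0) $, i.e.\ a uniform bound on the discrete time-derivative in $ L^2(\R_+;L^2(M)) $.

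Next I would derive the discrete Euler–Lagrange equation. Because minimization is over the constrained manifold $ \Ss^{d-1} $, varying $ \uu_h^{k+1} $ by $ \vp - (\vp\cdot\uu_h^{k+1})\uu_h^{k+1} $ for $ \vp\in C^\ift(M,\R^d) $ (the tangential projection), or more cleanly using variations $ \uu_h^{k+1}\mapsto \f{\uu_h^{k+1}+s\vp}{|\uu_h^{k+1}+s\vp|} $, produces
\be
\int_M \f{\uu_h^{k+1}-\uu_h^k}{h}\cdot\vp\,\ud\vol_g + \int_M \na\uu_h^{k+1}:\na\vp\,\ud\vol_g = \int_M |\na\uu_h^{k+1}|^2\,\uu_h^{k+1}\cdot\vp\,\ud\vol_g
\ee
for all test $ \vp $; equivalently $ \f{\uu_h^{k+1}-\uu_h^k}{h} - \Delta_g\uu_h^{k+1} - |\na\uu_h^{k+1}|^2\uu_h^{k+1} \perp \Ss^{d-1} $ in the distributional sense, which is exactly the weak form of the flow since $ \pa_t\uu - \Delta_g\uu $ is always parallel to $ \uu $ along $ \Ss^{d-1} $-valued maps. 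Then I would define the piecewise-constant interpolant $ \ol\uu_h(t) = \uu_h^{k+1} $ for $ t\in(t_k,t_{k+1}] $ and the piecewise-affine interpolant $ \uu_h(t) $ with $ \uu_h(t_k)=\uu_h^k $, so that $ \pa_t\uu_h(t) = \f{\uu_h^{k+1}-\uu_h^k}{h} $ on each subinterval. From the a priori estimates, $ \{\uu_h\} $ is bounded in $ H^1(\R_+\times M) $ uniformly, $ \{\na\ol\uu_h\} $ is bounded in $ L^\ift(L^2) $, and $ \|\ol\uu_h - \uu_h\|_{L^2(L^2)}^2 \lesssim h\sum_k\|\uu_h^{k+1}-\uu_h^k\|_{L^2}^2 \to 0 $. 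Passing $ h\to 0 $ along a subsequence: $ \uu_h\rightharpoonup\uu $ weakly in $ H^1 $, strongly in $ L^2_{\loc} $ (Aubin–Lions, since $ \uu_h $ bounded in $ L^\ift(H^1)\cap H^1(L^2) $), hence $ \ol\uu_h\to\uu $ strongly in $ L^2_{\loc} $ and a.e.; in particular $ |\uu|=1 $ a.e. The linear terms in the weak equation pass to the limit immediately. The quadratic term $ |\na\ol\uu_h|^2\ol\uu_h\cdot\vp $ is the delicate one: I would handle it by the classical div–curl / Hélein-type structure — rewrite the Euler–Lagrange identity in divergence form using that $ \uu_h^{k+1}\cdot\na\uu_h^{k+1} = 0 $, so that $ |\na\uu_h^{k+1}|^2\uu_h^{k+1} = -\sum_\al \pa_\al(\uu_h^{k+1})\,(\pa_\al\uu_h^{k+1}\cdot\uu_h^{k+1}) $ trivially vanishes and instead test against $ \vp = \uu_h^{k+1}\times\psi $ (wedge in the $ \R^d $ sense is not available for general $ d $; for $ \Ss^{d-1} $ one uses the antisymmetrized test function $ \psi^i\uu^j - \psi^j\uu^i $), which converts the equation into the conservation-law form $ \pa_t(\uu^i\uu^j\text{-type}) + \div(\uu^i\na\uu^j - \uu^j\na\uu^i) = 0 $ containing no quadratic gradient term, so weak $ L^2 $ convergence of $ \na\ol\uu_h $ together with strong $ L^2 $ convergence of $ \ol\uu_h $ suffices to pass to the limit. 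This reconstructs the full weak formulation of \eqref{HeatFlowSimple} for the limit $ \uu $. Finally, for the initial condition \eqref{initialdateSd1}, the energy inequality gives $ \sum_{k=0}^{N-1}\|\uu_h^{k+1}-\uu_h^k\|_{L^2}^2 \leq 2h\,E(\uu_0) $, whence $ \|\ol\uu_h(t)-\uu_0\|_{L^2} $ is controlled (by Cauchy–Schwarz over at most $ t/h + 1 $ steps) by $ C\sqrt{t\,E(\uu_0)} + (\text{vanishing in }h) $; taking $ h\to 0 $ and then $ t\to 0 $ yields $ \uu(\cdot,t)\to\uu_0 $ in $ L^2 $, and combined with weak-$ H^1 $ continuity this gives the stated limit.

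I expect the main obstacle to be the passage to the limit in the nonlinear term $ |\na\ol\uu_h|^2\ol\uu_h $: weak $ H^1 $ convergence alone does not control products of gradients, so one genuinely needs the geometric/algebraic cancellation specific to sphere-valued maps (the conservation-law reformulation, equivalently testing only against tangential fields of the form $ \uu^i\psi^j - \uu^j\psi^i $) to avoid it entirely; this is the step that uses the target geometry in an essential way and is the template that must later be adapted — with considerably more work because of the interface conditions \eqref{MinimalPair}–\eqref{NeumannJump} — in the proofs of Theorems \ref{ExistenceFix} and \ref{ExistenceNonFix}. A secondary technical point is verifying that the constrained minimizer's Euler–Lagrange equation holds in the strong distributional form claimed (one must check the variation $ s\mapsto \f{\uu+s\vp}{|\uu+s\vp|} $ is admissible and differentiate at $ s=0 $, using $ |\uu|=1 $), and that the piecewise interpolants satisfy the uniform bounds needed for Aubin–Lions; these are routine but must be done carefully.
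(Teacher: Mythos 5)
Your proposal follows the same route as the paper: a minimizing-movement scheme, piecewise-constant and piecewise-affine interpolants, the energy-dissipation telescoping estimate, Aubin--Lions compactness, and passage to the limit in the quadratic term by testing against antisymmetrized (wedge-type) fields---precisely the device the paper formalizes through Lemma~\ref{lemLW08}. Two blemishes are worth flagging, neither of which is ultimately fatal to the argument. First, your displayed discrete Euler--Lagrange identity is incorrect as stated for general $\vp$: computing the first variation along $s\mapsto(\uu_h^{k+1}+s\vp)/|\uu_h^{k+1}+s\vp|$ yields the Lagrange multiplier $|\na\uu_h^{k+1}|^2 + h^{-1}\bigl(1-\uu_h^{k+1}\cdot\uu_h^k\bigr)$, not just $|\na\uu_h^{k+1}|^2$ (compare \eqref{DiscreteEquation}); the missing term is parallel to $\uu_h^{k+1}$, hence invisible to tangential or antisymmetrized test fields, which is why the final limit passage still succeeds, but the identity as written for arbitrary $\vp$ is false at fixed $h>0$. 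Second, the parenthetical claim that $|\na\uu_h^{k+1}|^2\uu_h^{k+1} = -\sum_\al\pa_\al\uu_h^{k+1}\,(\pa_\al\uu_h^{k+1}\cdot\uu_h^{k+1})$ ``trivially vanishes'' is garbled: the right-hand side is indeed zero (since $\uu\cdot\pa_\al\uu=0$), but it is not an identity for $|\na\uu|^2\uu$. The correct mechanism is the one you state next---test against $\psi^i\uu^j-\psi^j\uu^i$, under which both the quadratic term and the normal Lagrange-multiplier contribution are annihilated, and the remaining bilinear quantities pass to the limit by weak $L^2$ convergence of gradients paired with strong $L^2$ convergence of $\ol\uu_h$.
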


Here, by a weak solution, we mean that for any
$ \vp\in C_0^{\ift}(M\times\R_+) $,
\[
\int_{M\times\R_+}\(\pa_t\uu\vp+\<\na\uu,\na\vp\>_g\)\ud\vol_g\ud t
=\int_{M\times\R_+}|\na\uu|^2\uu\vp\ud\vol_g\ud t.
\]
In local coordinates, $ \ud\vol_g=\sqrt{\det g}\ud x $. We begin with the
discrete variational problem
\be
\min_{\uu\in H^1(M,\Ss^{n-1})}E_h(\wt{\uu};\uu):=\min_{\uu\in H^1(M,\Ss^{n-1})}\int_{M}\(|\na\uu|^2+2h^{-1}|\wt{\uu}-\uu|^2\)\ud\vol_g,\label{Functional}
\ee
where $ h>0 $ and $ \wt{\uu}\in H^1(M,\Ss^{n-1}) $. We first prove the existence of
a minimizer.

\begin{prop}\label{TheoremExistence}
Assume that $ h>0 $ and $ \wt{\uu}\in H^1(M,\Ss^{n-1}) $. Then there exists a minimizer $ \uu\in H^1(M,\Ss^{n-1}) $ for \eqref{Functional}.
\end{prop}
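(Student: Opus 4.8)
The plan is to run the direct method of the calculus of variations. First I would note that $E_h(\wt{\uu};\cdot)$ is non-negative on $H^1(M,\Ss^{d-1})$ and that $E_h(\wt{\uu};\wt{\uu})=\int_M|\na\wt{\uu}|^2\,\ud\vol_g<\ift$ because $\wt{\uu}\in H^1(M,\Ss^{d-1})$; hence the infimum $m:=\inf_{\uu\in H^1(M,\Ss^{d-1})}E_h(\wt{\uu};\uu)$ is a finite non-negative real number, and the admissible set is non-empty. Choose a minimizing sequence $\{\uu_k\}\subset H^1(M,\Ss^{d-1})$ with $E_h(\wt{\uu};\uu_k)\to m$.

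Next I would extract a weak limit. For $k$ large, $E_h(\wt{\uu};\uu_k)\le m+1$ yields a uniform bound on $\|\na\uu_k\|_{L^2(M)}$, and since $|\uu_k|=1$ a.e.\ on $M$ one also has $\|\uu_k\|_{L^2(M)}^2=\vol_g(M)$, so $\{\uu_k\}$ is bounded in $H^1(M,\R^d)$. By weak compactness of bounded sets in the Hilbert space $H^1(M,\R^d)$ together with the Rellich--Kondrachov compact embedding $H^1(M)\hookrightarrow\hookrightarrow L^2(M)$ (valid since $(M,g)$ is compact smooth without boundary), there is, after passing to a subsequence that I will not relabel, a limit $\uu\in H^1(M,\R^d)$ with $\uu_k\wc\uu$ weakly in $H^1$, $\uu_k\to\uu$ strongly in $L^2(M)$, and $\uu_k\to\uu$ pointwise a.e.\ on $M$.

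Then I would verify admissibility of $\uu$ and conclude by lower semicontinuity. Pointwise a.e.\ convergence gives $|\uu|=1$ a.e., so $\uu\in H^1(M,\Ss^{d-1})$ is an admissible competitor. Weak lower semicontinuity of $\uu\mapsto\|\na\uu\|_{L^2(M)}^2$ gives $\int_M|\na\uu|^2\,\ud\vol_g\le\liminf_k\int_M|\na\uu_k|^2\,\ud\vol_g$, while the strong $L^2$ convergence gives $\int_M|\wt{\uu}-\uu|^2\,\ud\vol_g=\lim_k\int_M|\wt{\uu}-\uu_k|^2\,\ud\vol_g$. Adding the two, $E_h(\wt{\uu};\uu)\le\liminf_k E_h(\wt{\uu};\uu_k)=m$, which forces equality and shows $\uu$ is a minimizer.

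The only genuinely non-routine point is the weak closedness of the constraint manifold $H^1(M,\Ss^{d-1})$ inside $H^1(M,\R^d)$: the pointwise constraint $|\uu|=1$ is nonconvex, so it is not preserved under weak $H^1$ convergence alone. This is precisely where the compactness of the Sobolev embedding enters, since it upgrades weak $H^1$ convergence to strong $L^2$ and hence a.e.\ convergence, under which $|\uu|=1$ does survive. Everything else — finiteness of $m$, the $H^1$-bound on the minimizing sequence, and weak/strong lower semicontinuity of the two terms — is standard; a minor technicality is that the integrals are read in local charts via $\ud\vol_g=\sqrt{\det g}\,\ud x$ and patched using a finite atlas, which the compactness of $M$ makes harmless.
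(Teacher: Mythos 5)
Your proof is correct and follows essentially the same route as the paper: direct method, uniform $H^1$-bound on a minimizing sequence, weak $H^1$ / strong $L^2$ (Rellich) subsequential limit, and lower semicontinuity of the two terms. You are more careful than the paper in one spot — you explicitly verify weak closedness of the constraint $\{|\uu|=1\}$ via pointwise a.e.\ convergence, which the paper leaves implicit (and the paper also has a minor slip in claiming the Rellich convergence is strong in $H^1$ rather than $L^2$).
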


\begin{proof}
Set
\[
\ga:=\inf_{\vv\in H^1(M,\Ss^{n-1})}E_h(\wt{\uu};\vv),
\]
and choose a minimizing sequence $ \{\uu_k\}\subset H^1(M,\Ss^{n-1}) $ such
that $ E_h(\wt{\uu};\uu_k)\to\ga $ as $ k\to+\ift $. Then
$ \{\uu_k\} $ is bounded in $ H^1(M,\R^n) $. Passing to a subsequence, we may
assume that
\[
\uu_k\wc\uu\quad\text{weakly in }H^1(M,\R^n).
\]
By the Rellich theorem, after passing to a further subsequence if necessary,
$ \uu_k\to\uu $ strongly in $ L^2(M,\R^n) $ and a.e. on $ M $. Since
$ |\uu_k|=1 $ a.e. on $ M $, it follows that $ |\uu|=1 $ a.e. on $ M $, and
hence $ \uu\in H^1(M,\Ss^{n-1}) $. By weak lower semicontinuity,
\be
\ga\leq E_h(\wt{\uu};\uu)\leq\liminf_{k\to+\ift}E_h(\wt{\uu};\uu_k)=\ga.
\ee
Therefore, $ \uu $ is a minimizer of \eqref{Functional}.
\end{proof}

We now construct the discrete solutions and pass to the limit.

\begin{proof}[Proof of Proposition \ref{proptoymodel}]
To construct weak solutions of \eqref{HeatFlowSimple} on $ M\times\R_+ $, it
is enough to prove existence on $ M\times(0, T) $ for arbitrary $ T>0 $, with
bounds independent of $ T $ on compact time intervals. Fix $ T>0 $, let
$ N\in\Z_+ $, and set $ h=\f{T}{N} $. The proof is divided into several steps.

\begin{enumerate}[label=$\text{Step }\theenumi.$]
\item Set $ \uu_h^0:=\uu_0\in H^1(M,\Ss^{n-1}) $.

\item For each $ k=0,1,\dots,N-1 $, let $ \uu_h^{k+1} $ be a minimizer of
\be
\min_{\vv\in H^1(M,\Ss^{n-1})}E_h(\uu_h^k;\vv),\label{minimizingProperty}
\ee
whose existence follows from Proposition \ref{TheoremExistence}.

We next derive the Euler--Lagrange equation. Let $ \vv $ be a minimizer of
\eqref{Functional}. For any $ \vp\in C^{\ift}(M,\R^n) $, consider the variation
$ \Pi_{\Ss^{n-1}}(\vv+\va\vp) $, where
$ \Pi_{\Ss^{n-1}}:N_{\delta}(\Ss^{n-1})\to\Ss^{n-1} $ is the nearest-point
projection from a tubular neighborhood of $ \Ss^{n-1} $ onto $ \Ss^{n-1} $.
Differentiating at $ \va=0 $, we obtain
\[
-\Delta_g\vv+\f{1}{h}(\vv-\wt{\uu})\perp T_{\vv(x)}\Ss^{n-1}.
\]
Since $ |\vv|=1 $, this is equivalent to
\[
-\Delta_g\vv+\f{1}{h}(\vv-\wt{\uu})=\(\left|\na\vv\right|^2+\f{1-\vv\wt{\uu}}{h}\)\vv.
\]
Hence, for each $ k\in\Z\cap[1,N] $, we have
\be
-\Delta_g \uu_h^k+\f{1}{h}(\uu_h^k-\uu_h^{k-1})
=\(\left|\na \uu_h^k\right|^2+\f{1-\uu_h^k\uu_h^{k-1}}{h}\)\uu_h^k.
\label{DiscreteEquation}
\ee

\item Let $ t_k=kh $ for $ k\in\Z\cap[0,N] $. We define the piecewise linear
and piecewise constant interpolations by
\be
\left\{\begin{aligned}
\wt{\uu}_N(x,t)&:=\f{t-t_{k-1}}{h}\uu_h^k(x)+\(1-\f{t-t_{k-1}}{h}\)\uu_h^{k-1}(x),&x\in M,\ t\in(t_{k-1},t_k],\\
\ol{\uu}_N(x,t)&:=\uu_h^k(x),&x\in M,\ t\in(t_{k-1},t_k].
\end{aligned}\right.
\ee
Then
\[
\pa_t\wt{\uu}_N=\f{\uu_h^k-\uu_h^{k-1}}{h}\quad
\text{for }t\in(t_{k-1},t_k].
\]
By \eqref{DiscreteEquation}, it follows that
\be
\pa_t\wt{\uu}_N=\Delta_g\ol{\uu}_N+\(\left|\na \uu_h^k\right|^2+\f{1-\uu_h^k\uu_h^{k-1}}{h}\)\ol{\uu}_N\quad\text{in }M\times(0,T),
\label{Approximate}
\ee
in the sense of distributions.
\end{enumerate}

We next derive uniform estimates. Since $ |\uu_h^k|=1 $ a.e. on $ M $, the
minimality of $ \uu_h^k $ gives
\[
E_h(\uu_h^{k-1};\uu_h^k)\leq E_h(\uu_h^{k-1};\uu_h^{k-1})
=\int_M|\na \uu_h^{k-1}|^2\ud\vol_g,
\]
and therefore
\[
2h^{-1}\int_M|\uu_h^k-\uu_h^{k-1}|^2\ud\vol_g
+\int_M|\na\uu_h^k|^2\ud\vol_g\leq\int_M|\na\uu_h^{k-1}|^2\ud\vol_g.
\]
Equivalently,
\[
2h\int_M\left|\f{\uu_h^k-\uu_h^{k-1}}{h}\right|^2\ud\vol_g
+\int_M|\na\uu_h^k|^2\ud\vol_g\leq\int_M|\na\uu_h^{k-1}|^2\ud\vol_g.
\]
Summing over $ k=1,\dots,N $, we obtain
\[
2\int_{M\times(0,T)}\left|\pa_t\wt{\uu}_N\right|^2\ud\vol_g\ud t
+\sup_{t\in(0,T)}\int_M|\na\ol{\uu}_N|^2\ud\vol_g
\leq\int_M|\na\uu_0|^2\ud\vol_g.
\]
Since $ \wt{\uu}_N $ is a convex combination of $ \uu_h^k $ and
$ \uu_h^{k-1} $, the same bound yields
\[
\sup_{t\in(0,T)}\int_M|\na\wt{\uu}_N|^2\ud\vol_g
\leq C\int_M|\na\uu_0|^2\ud\vol_g.
\]
Hence
\begin{gather*}
\{\ol{\uu}_N\}\text{ is bounded in }L^{\ift}(0,T;H^1(M,\R^n)),\\
\{\wt{\uu}_N\}\text{ is bounded in }L^{\ift}(0,T;H^1(M,\R^n)),\\
\{\pa_t\wt{\uu}_N\}\text{ is bounded in }L^2(M\times(0,T);\R^n).
\end{gather*}
By compactness, after passing to a subsequence, there exist
$ \uu,\vv:M\times(0,T)\to\R^n $ such that
\be
\begin{gathered}
\wt{\uu}_N\to \uu\text{ strongly in }L^2(M\times(0,T),\R^n),\\
\na\wt{\uu}_N\wc^*\na\uu\text{ weakly* in }L^{\ift}(0,T;L^2(M,\R^n)),\\
\pa_t\wt{\uu}_N\wc\pa_t\uu\text{ weakly in }L^2(M\times(0,T),\R^n),\\
\ol{\uu}_N\wc\vv\text{ weakly in }L^2(M\times(0,T),\R^n),\\
\na\ol{\uu}_N\wc^* \na\vv\text{ weakly* in }L^{\ift}(0,T;L^2(M,\R^n)).
\end{gathered}\label{convergenceu}
\ee

We now show that $ \uu=\vv $. For $ t\in(t_{k-1},t_k] $,
\[
\wt{\uu}_N-\ol{\uu}_N=\(\f{t-t_{k-1}}{h}-1\)(\uu_h^k-\uu_h^{k-1}),
\]
and hence
\[
|\wt{\uu}_N-\ol{\uu}_N|^2\leq|\uu_h^k-\uu_h^{k-1}|^2.
\]
Integrating over $ M\times(t_{k-1},t_k] $ and summing in $ k $, we obtain
\[
\int_{M\times(0,T)}|\wt{\uu}_N-\ol{\uu}_N|^2\ud\vol_g\ud t
\leq h^2\int_{M\times(0,T)}|\pa_t\wt{\uu}_N|^2\ud\vol_g\ud t\to 0,
\]
as $ N\to+\ift $. Thus $ \uu=\vv $ a.e. on $ M\times(0,T) $. Since $ |\ol{\uu}_N|=1 $ a.e., it follows that $ |\uu|=1 $ a.e. on $ M\times(0,T) $.

To identify the limit equation, we use the following characterization of weak solutions.

\begin{lem}[\cite{LW08}, Lemma 7.5.4]\label{lemLW08}
A map $ \uu:M\times(0,T)\to\Ss^{n-1} $ with
$ \na\uu\in L^{\ift}(0,T;L^2(M,\R^n)) $ and
$ \pa_t\uu\in L^2(M\times(0,T);\R^n) $ is a weak solution of
\eqref{HeatFlowSimple} if and only if
\[
\pa_t\uu\wedge\uu-\op{div}(\na\uu\wedge\uu)=0
\quad\text{in }M\times(0,T),
\]
where $ \wedge $ denotes the wedge product in $ \R^n $.
\end{lem}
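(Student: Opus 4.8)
The plan is to prove the two implications by inserting, into each of the two weak formulations, a non-smooth but still admissible test function built from the test function of the other formulation together with the solution $\uu$ itself. First some bookkeeping: I identify a bivector $a\wedge b$ with the antisymmetric matrix $a\otimes b-b\otimes a\in\mathbb{A}_d$, set $\op{div}(\na\uu\wedge\uu):=\sum_\al\pa_\al(\pa_\al\uu\wedge\uu)$ (the sum over the spatial gradient index, contracted with the Riemannian metric of $M$, which only couples spatial indices), and read the wedge identity as: for every $\psi\in C_0^\ift(M\times(0,T),\mathbb{A}_d)$,
\be
\int_{M\times(0,T)}\Big((\pa_t\uu\wedge\uu):\psi+\sum_{\al}(\pa_\al\uu\wedge\uu):\pa_\al\psi\Big)\,\ud\vol_g\,\ud t=0.\label{wedgeweak}
\ee
The step I expect to be the crux is a preliminary \emph{enlargement of the test-function class}: in \eqref{wedgeweak}, and likewise in the weak formulation of \eqref{HeatFlowSimple} recorded after Proposition \ref{proptoymodel}, one may replace $C_0^\ift$ by the class of bounded functions lying in $H^1$ with compact support in time. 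Indeed every term in both identities is multilinear in the test function and its first derivatives with coefficients $\uu\in L^\ift$, $\na\uu\in L^\ift(0,T;L^2)$, $\pa_t\uu\in L^2$, and (for the reaction term) $|\na\uu|^2\in L^\ift(0,T;L^1)$; so mollifying a bounded $H^1$ test function — locally via a partition of unity on $M$ and in the time variable — produces smooth test functions converging to it strongly in $H^1$ with a uniform $L^\ift$ bound, and passing to the limit in each integral is immediate. From now on the constraint $|\uu|=1$ a.e. is used freely, in the form $\uu\cdot\pa_t\uu=0$ and $\uu\cdot\pa_\al\uu=0$ a.e.

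\textbf{Direction ($\Rightarrow$).} Assume $\uu$ is a weak solution and fix $\psi\in C_0^\ift(M\times(0,T),\mathbb{A}_d)$. I would test the weak equation with $\vp:=\psi\uu$, which is bounded, lies in $H^1$ with compact time-support, hence is admissible, and satisfies $\vp\cdot\uu=0$. Using only the antisymmetry of $\psi$ one gets $\na\uu\cdot\na\vp=\sum_\al\pa_\al\uu\cdot((\pa_\al\psi)\uu)$ (the term $\pa_\al\uu\cdot(\psi\pa_\al\uu)$ drops out), $|\na\uu|^2\uu\cdot\vp=0$, together with $(\pa_t\uu\wedge\uu):\psi=2\,\pa_t\uu\cdot(\psi\uu)$ and $(\pa_\al\uu\wedge\uu):\pa_\al\psi=2\,\pa_\al\uu\cdot((\pa_\al\psi)\uu)$; substituting and multiplying by $2$ gives exactly \eqref{wedgeweak}.

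\textbf{Direction ($\Leftarrow$).} Assume \eqref{wedgeweak} and fix $\vp\in C_0^\ift(M\times(0,T),\R^d)$. Split $\vp=\vp^\top+\eta\uu$ with $\eta:=\vp\cdot\uu$ and $\vp^\top:=\vp-\eta\uu$ tangential to the sphere along $\uu$; both pieces are bounded, $H^1$, with compact time-support, hence admissible. For the normal piece $\eta\uu$ one checks term by term, using $\uu\cdot\pa_t\uu=\uu\cdot\pa_\al\uu=0$, that $\pa_t\uu\cdot(\eta\uu)=0$, $\na\uu\cdot\na(\eta\uu)=\eta|\na\uu|^2$ and $|\na\uu|^2\uu\cdot(\eta\uu)=\eta|\na\uu|^2$, so this piece solves the weak equation trivially. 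For the tangential piece I would apply \eqref{wedgeweak} with $\psi:=\vp^\top\wedge\uu=\vp^\top\otimes\uu-\uu\otimes\vp^\top$; the contraction identity $(a\wedge b):(c\wedge d)=2\big[(a\cdot c)(b\cdot d)-(a\cdot d)(b\cdot c)\big]$, together with $|\uu|=1$, $\uu\cdot\pa_\al\uu=0$ and $\uu\cdot\vp^\top=0$, collapses \eqref{wedgeweak} to $\int(\pa_t\uu\cdot\vp^\top+\na\uu\cdot\na\vp^\top)\,\ud\vol_g\,\ud t=0$; adding $0=\int|\na\uu|^2\uu\cdot\vp^\top$ and then summing with the normal piece recovers the weak formulation of \eqref{HeatFlowSimple} for $\vp$.

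\textbf{Main obstacle.} The only genuinely non-routine point is the admissibility of the non-smooth test functions $\psi\uu$, $\eta\uu$ and $\vp^\top\wedge\uu$: one must carry out the density argument above, making sure the mollifications converge in a topology strong enough for every term to pass to the limit — in particular the reaction term, where $|\na\uu|^2$ is only $L^1$ in space — and that the bounded $H^1\cap L^\ift$ products are controlled uniformly. Everything else is elementary linear algebra with antisymmetric matrices and the pointwise orthogonality relations forced by $|\uu|=1$.
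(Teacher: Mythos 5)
The paper does not prove Lemma \ref{lemLW08}; it is imported verbatim from \cite{LW08} (Lemma 7.5.4) and used as a black box, so there is no in-text proof to compare against. Your argument is correct on its own terms and matches the standard proof of this equivalence: the two directions are obtained by testing with $\psi\uu$ and, after the tangential/normal splitting $\vp=\vp^\top+(\vp\cdot\uu)\uu$, with $\vp^\top\wedge\uu$; the algebraic identities $(a\wedge b):\psi=2a\cdot(\psi b)$ for antisymmetric $\psi$ and $(a\wedge b):(c\wedge d)=2[(a\cdot c)(b\cdot d)-(a\cdot d)(b\cdot c)]$ are both correct, and the cancellations you use ($\uu\cdot\pa_\al\uu=0$, $\uu\cdot\pa_t\uu=0$, $\pa_\al\uu\cdot\psi\pa_\al\uu=0$) are the right ones. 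You also correctly isolate the only non-routine point, the enlargement of the test-function class to bounded $H^1$ maps with compact support in time; your justification is adequate given that $M$ is closed, that only spatial gradients of the test function appear in both weak formulations, that the problematic coefficient $|\na\uu|^2$ lies in $L^\infty_tL^1_x$ so a uniform $L^\infty$ bound plus a.e.\ convergence of the mollified test functions suffices for that term, and that $\na\uu\wedge\uu\in L^2$ locally in time handles the remaining terms. In short: the paper merely cites, your proof is correct, and it is the argument one would find in \cite{LW08}.
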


We now pass to the limit in \eqref{Approximate}. Wedge \eqref{Approximate} with
$ \ol{\uu}_N\phi $, where $ \phi\in C_0^{\ift}(M\times(0,T)) $. Since
$ \ol{\uu}_N\wedge\ol{\uu}_N=0 $, we obtain
\[
\int_{M\times(0,T)}(\pa_t\wt{\uu}_N-\Delta_g\ol{\uu}_N)\wedge
(\ol{\uu}_N\phi)\ud\vol_g\ud t=0.
\]
Integrating by parts gives
\be
\int_{M\times(0,T)}
\(\pa_t\wt{\uu}_N\wedge\ol{\uu}_N\phi
+\na\ol{\uu}_N\wedge\ol{\uu}_N\cdot\na\phi\)\ud\vol_g\ud t=0.\label{wedgeequality}
\ee
Since $ \ol{\uu}_N\to\uu $ strongly in $ L^2 $ while $ \pa_t\wt{\uu}_N\wc\pa_t\uu $ weakly in $ L^2 $, we may pass to the limit in the first term of \eqref{wedgeequality}. Likewise, since $ \ol{\uu}_N\to\uu $ strongly in $ L^2 $ and $ \na\ol{\uu}_N\wc \na\uu $ weakly in $ L^2 $, we may pass to the limit in the second term. Hence
\[
\int_{M\times(0,T)}
\(\pa_t\uu\wedge\uu\,\phi+\na\uu\wedge\uu\cdot\na\phi\)\ud\vol_g\ud t=0,
\]
that is,
\[
\pa_t\uu\wedge\uu-\op{div}(\na\uu\wedge\uu)=0
\quad\text{in }M\times(0,T).
\]
By Lemma \ref{lemLW08}, $ \uu $ is a weak solution of
\eqref{HeatFlowSimple} on $ M\times(0,T) $.

Finally, the construction gives
$ \wt{\uu}_N(\cdot,0)=\uu_0 $, and the bound on $ \pa_t\wt{\uu}_N $ implies equicontinuity in $ L^2(M) $. Passing to the limit yields
\eqref{initialdateSd1}. Since $ T>0 $ is arbitrary, the proof is complete.
\end{proof}

\subsubsection{\eqref{HeatSystems} vs. the toy model: difficulties and adaptations}

The toy model above illustrates the minimizing movement scheme for a one-phase harmonic map flow into the sphere $ \Ss^{n-1} $. Our problem \eqref{HeatSystems} is more difficult in several respects. We now explain the main differences and the corresponding changes in the argument.

The first difference concerns the target manifold. In the toy model, the target is the sphere $ \Ss^{n-1} $, whereas here it is $ \OO_{\pm}(n) $, the set of orthogonal matrices with determinant $ \pm 1 $. The geometry of
$ \OO_{\pm}(n) $ is different from that of the sphere, so the arguments used in the toy model do not apply directly. In particular, the wedge-product identity used there must be replaced by an analog adapted to matrix-valued maps. This is one of the basic structural changes in the proof.

The second difference is that the interface $ \Ga(t) $ moves by mean curvature flow. In the toy model, the domain is fixed, but here the domains $ \om_{\pm}(t) $ vary with time. This makes the discrete variational problem more delicate since the functional must compare configurations defined on different domains at different time steps. To address this point, we add a correction term to the functional (see \eqref{FunctionalProblem}). This term is designed to absorb the motion of the interface and make the minimizing movement scheme compatible with the evolving interface.

The third difference comes from the interface conditions. In contrast to the boundary-free toy model, the system \eqref{HeatSystems} involves the coupling conditions on $ \Ga(t) $, especially the minimal pair condition \eqref{MinimalPair}. Although the minimizing movement scheme produces approximations that are compatible with these constraints at the discrete level, passing to the limit is not straightforward. The regularity of the piecewise constant interpolation $ \ol{u}_N $ seems too weak for some parts of the argument. In contrast, piecewise linear interpolation $ \wt{u}_N $ alone does not provide sufficient information about the constraints of the interface. The proof, therefore, requires both families of approximations. Precisely, we use their complementary properties to pass to the limit and to show that the resulting weak solution satisfies the interface conditions.

\section{Functional problems}

We introduce the functional
\[
E_h(\mathbf{V},\wt{\A};\A) := \sum_{\zeta=\pm}\int_{\om_{\zeta}}\left(\|\na\A_{\zeta}\|^2 + h^{-1}\|\A_{\zeta}-\wt{\A}_{\zeta}\|^2 + 2(\mathbf{V}_{\zeta}\cdot\na\wt{\A}_{\zeta}):(\A_{\zeta}-\wt{\A}_{\zeta})\right)\ud x,
\]
which depends on three parameters:
\begin{itemize}
\item $h>0$ is a positive constant.
\item $\wt{\A}\in\cA_{\Ga}$, where $\Ga$ and $\cA_{\Ga}$ are as in \eqref{FunctionalSet}.
\item $\mathbf{V}:=\mathbf{V}_{\pm}=(\mathbf{V}_{\pm}^{\al})\in L^{\ift}(\om_{\pm},\R^n)$, with the notation
\[
\mathbf{V}_{\pm}\cdot\na\wt{\A}_{\pm} := \sum_{\al=1}^d\mathbf{V}_{\pm}^{\al}\pa_{\al}\wt{\A}_{\pm}.
\]
\end{itemize}
We study the minimization problem
\be
\min_{\A=\A_{\pm}\in\cA_{\Ga}}E_h(\mathbf{V},\wt{\A};\A).\label{FunctionalProblem}
\ee
We adopt the convention $E_h(0^d,\wt{\A};\A)=:E_h(\wt{\A};\A)$, where $0^d\in\R^d$ is the zero vector.

As a preliminary step, analogous to Proposition~\ref{TheoremExistence}, we establish the existence of a minimizer for \eqref{FunctionalProblem}.

\begin{prop}\label{FA}
Given $\wt{\A}=\wt{\A}_{\pm}\in\cA_{\Ga}$, $\mathbf{V}=\mathbf{V}_{\pm}\in L^{\ift}(\om_{\pm},\R^n)$, and $h>0$, the minimization problem \eqref{FunctionalProblem} admits a minimizer
\[
\A:=\A(\mathbf{V},\wt{\A};h)=\mathop{\argmin}_{\mathbf{B}\in\cA_{\Ga}}E_h(\mathbf{V},\wt{\A};\mathbf{B})\in\cA_{\Ga}.
\]
For brevity, we write $\A(0^d,\wt{\A};h)=:\A(\wt{\A};h)$.
\end{prop}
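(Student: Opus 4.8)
The plan is to run the direct method in the calculus of variations, essentially mirroring the proof of Proposition~\ref{TheoremExistence} but accounting for the extra linear term and the minimal-pair constraint. First I would check that the functional is bounded below on $\mathcal{A}_{\Ga}$: for $\A\in\mathcal{A}_{\Ga}$ we have $\|\A_\pm\|=\sqrt{n}$ pointwise, so the cross term is controlled by Cauchy--Schwarz and Young's inequality,
\be
\Bigl|2\int_{\om_\zeta}(\mathbf{V}_\zeta\cdot\na\wt{\A}_\zeta):(\A_\zeta-\wt{\A}_\zeta)\,\ud x\Bigr|\leq \f{h^{-1}}{2}\int_{\om_\zeta}\|\A_\zeta-\wt{\A}_\zeta\|^2\,\ud x + 2h\int_{\om_\zeta}\|\mathbf{V}_\zeta\cdot\na\wt{\A}_\zeta\|^2\,\ud x,
\ee
and since $\wt{\A}\in H^1$ and $\mathbf{V}\in L^\ift$, the last integral is finite. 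Hence $E_h(\mathbf{V},\wt{\A};\A)\geq \sum_\zeta\int_{\om_\zeta}\bigl(\|\na\A_\zeta\|^2+\tfrac{1}{2}h^{-1}\|\A_\zeta-\wt{\A}_\zeta\|^2\bigr)\ud x - C(h,\mathbf{V},\wt{\A})$, so the infimum $\ga$ is finite (it is also $\leq E_h(\mathbf{V},\wt{\A};\wt{\A})<\ift$ since $\wt{\A}\in\mathcal{A}_{\Ga}$).

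Next I would take a minimizing sequence $\{\A^{(k)}\}\subset\mathcal{A}_{\Ga}$ with $E_h(\mathbf{V},\wt{\A};\A^{(k)})\to\ga$. The lower bound above, together with the pointwise constraint $\|\A^{(k)}_\pm\|=\sqrt{n}$ (giving $L^\ift$, hence $L^2$, control) shows $\{\A^{(k)}\}$ is bounded in $H^1(\om_+,\MM_n)\times H^1(\om_-,\MM_n)$. Passing to a subsequence, $\A^{(k)}\wc\A$ weakly in $H^1$ on each $\om_\pm$, and by Rellich--Kondrachov $\A^{(k)}\to\A$ strongly in $L^2(\om_\pm)$ and a.e.; the a.e. convergence preserves the constraints $\A_+\in\OO_+(n)$, $\A_-\in\OO_-(n)$ a.e. For the minimal-pair condition on $\Ga$: the trace operator $H^1(\om_\pm)\to L^2(\Ga)$ is compact, so $\A^{(k)}_\pm|_\Ga\to\A_\pm|_\Ga$ strongly in $L^2(\Ga)$, hence a.e. on $\Ga$ along a further subsequence; since $\mathbb{P}_n$ is closed in $\MM_n$, $\A_+^{\T}\A_-\in\mathbb{P}_n$ a.e. on $\Ga$, so $\A\in\mathcal{A}_{\Ga}$. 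This is the step I expect to be the only genuinely delicate point — ensuring the nonconvex pointwise and trace constraints survive the weak limit — but compactness of the trace and closedness of $\OO_\pm(n)$, $\mathbb{P}_n$ handle it.

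Finally I would pass to the limit in the functional. The Dirichlet term $\sum_\zeta\int_{\om_\zeta}\|\na\A_\zeta\|^2$ is weakly lower semicontinuous in $H^1$; the term $\sum_\zeta h^{-1}\int_{\om_\zeta}\|\A_\zeta-\wt{\A}_\zeta\|^2$ is continuous under the strong $L^2$ convergence $\A^{(k)}\to\A$; and the linear cross term $\sum_\zeta 2\int_{\om_\zeta}(\mathbf{V}_\zeta\cdot\na\wt{\A}_\zeta):(\A^{(k)}_\zeta-\wt{\A}_\zeta)\,\ud x$ is also continuous under strong $L^2$ convergence since $\mathbf{V}_\zeta\cdot\na\wt{\A}_\zeta\in L^2(\om_\zeta)$ is a fixed function. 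Therefore
\be
\ga\leq E_h(\mathbf{V},\wt{\A};\A)\leq\liminf_{k\to\ift}E_h(\mathbf{V},\wt{\A};\A^{(k)})=\ga,
\ee
so $\A$ attains the minimum, and $\A\in\mathcal{A}_{\Ga}$ by the previous step. Setting $\A(\mathbf{V},\wt{\A};h):=\A$ and, in the case $\mathbf{V}=\mathbf{0}^d$, writing $\A(\wt{\A};h)=\A(\mathbf{0}^d,\wt{\A};h)$, completes the proof.
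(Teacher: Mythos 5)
Your proposal is correct and follows essentially the same route as the paper: lower bound via Young's inequality with the same $(2h)^{-1}$ split, $H^1$ bound of a minimizing sequence, weak $H^1$ / strong $L^2$ limit preserving $\OO_\pm(n)$, compactness of the trace to pass the minimal-pair constraint to the limit a.e.\ on $\Ga$, and weak lower semicontinuity of the Dirichlet term together with strong-$L^2$ continuity of the remaining two terms. The only cosmetic difference is that you spell out term-by-term why $E_h$ is sequentially weakly lower semicontinuous, where the paper simply invokes ``the semi-continuous property,'' and you cite compactness of the trace operator directly where the paper routes through weak $H^{1/2}(\pa\om_\pm)$ convergence; both are the same argument.
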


\begin{proof}
Let $\{\A^{(k)}\}=\{\A_{\pm}^{(k)}\}_{k\geq 1}$ be a minimizing sequence:
\[
\lim_{k\to+\ift}E_h(\mathbf{V},\wt{\A};\A^{(k)})=\inf_{\A=\A_{\pm}\in\cA_{\Ga}}E_h(\mathbf{V},\wt{\A};\A)=:\ga.
\]
We first show $\ga\in\R$. Evaluating at $\A=\wt{\A}$ gives
\[
E_h(\mathbf{V},\wt{\A};\wt{\A})=\sum_{\zeta=\pm}\int_{\om_{\zeta}}\|\na\wt{\A}_{\zeta}\|^2\,\ud x<+\ift,
\]
so $\ga<+\ift$. For the lower bound, Young's inequality yields, for any $\A=\A_{\pm}\in\cA_{\Ga}$,
\begin{align*}
&\sum_{\zeta=\pm}\int_{\om_{\zeta}}\left(\|\na\A_{\zeta}\|^2+h^{-1}\|\A_{\zeta}-\wt{\A}_{\zeta}\|^2+2(\mathbf{V}_{\zeta}\cdot\na\wt{\A}_{\zeta}):(\A_{\zeta}-\wt{\A}_{\zeta})\right)\ud x\\
&\quad\geq\sum_{\zeta=\pm}\int_{\om_{\zeta}}\left(\|\na\A_{\zeta}\|^2+(2h)^{-1}\|\A_{\zeta}-\wt{\A}_{\zeta}\|^2-2h\|\mathbf{V}_{\zeta}\|_{\ift}^2\|\na\wt{\A}_{\zeta}\|^2\right)\ud x\\
&\quad\geq-\sum_{\zeta=\pm}\int_{\om_{\zeta}}2h\|\mathbf{V}_{\zeta}\|_{\ift}^2\|\na\wt{\A}_{\zeta}\|^2\,\ud x,
\end{align*}
so $\ga>-\ift$. Hence $\ga\in\R$.

We claim that
\be
\sup_{k\in\Z_+}\|\A_{\pm}^{(k)}\|_{H^1(\om_{\pm},\mathbb{M}_n)}<+\ift.\label{supclaim}
\ee
For sufficiently large $k$, the lower bound above gives
\[
2\ga\geq E_h(\mathbf{V},\wt{\A};\A^{(k)})\geq\sum_{\zeta=\pm}\int_{\om_{\zeta}}\left(\|\na\A_{\zeta}^{(k)}\|^2+(2h)^{-1}\|\A_{\zeta}^{(k)}-\wt{\A}_{\zeta}\|^2-2h\|\mathbf{V}_{\zeta}\|_{\ift}^2\|\na\wt{\A}_{\zeta}\|^2\right)\ud x,
\]
and hence
\[
\|\na\A_{\pm}^{(k)}\|_{L^2(\om_{\pm},\mathbb{M}_n)}^2\leq 2\ga+2h\|\mathbf{V}_{\pm}\|_{\ift}^2\|\na\wt{\A}_{\pm}\|_{L^2(\om_{\pm},\mathbb{M}_n)}^2.
\]
Since elements of $\OO_{\pm}(n)$ are uniformly bounded, claim \eqref{supclaim} follows.

By \eqref{supclaim} and the reflexivity of $H^1(\om_{\pm},\mathbb{M}_n)$, there exist $\A=\A_{\pm}\in H^1(\om_{\pm},\mathbb{M}_n)$ and a subsequence (not relabeled) such that
\[
\A_{\pm}^{(k)}\rightharpoonup\A_{\pm}\quad\text{weakly in }H^1(\om_{\pm},\mathbb{M}_n).
\]
By the Rellich--Kondrachov theorem, $H^1(\om_{\pm},\mathbb{M}_n)\hookrightarrow L^2(\om_{\pm},\mathbb{M}_n)$ is compact, so the convergence is also strong in $L^2(\om_{\pm},\mathbb{M}_n)$. Passing to a further subsequence, $\A_{\pm}^{(k)}\to\A_{\pm}$ pointwise a.e.; since $\OO_{\pm}(n)$ is closed in $\mathbb{M}_n$ and each $\A_{\pm}^{(k)}(x)\in\OO_{\pm}(n)$ a.e., we conclude $\A_{\pm}(x)\in\OO_{\pm}(n)$ for a.e.\ $x\in\om_{\pm}$. The weak lower semicontinuity of $E_h$ then gives
\[
E_h(\mathbf{V},\wt{\A};\A)\leq\liminf_{k\to+\ift}E_h(\mathbf{V},\wt{\A};\A^{(k)})=\inf_{\A=\A_{\pm}\in\cA_{\Ga}}E_h(\mathbf{V},\wt{\A};\A),
\]
so $\A$ achieves the infimum. It remains to verify that $\A$ satisfies the boundary condition \eqref{MinimalPair}.

By the trace theorem, weak convergence in $H^1(\om_{\pm},\mathbb{M}_n)$ implies weak convergence in $H^{\f{1}{2}}(\pa\om_{\pm},\mathbb{M}_n)$. The compact embedding $H^{\f{1}{2}}(\pa\om_{\pm})\hookrightarrow L^2(\pa\om_{\pm})$ then gives a strong convergence $\A_{\pm}^{(k)}\to\A_{\pm}$ in $L^2(\pa\om_{\pm},\mathbb{M}_n)$. Since $\A_{\pm}^{(k)}$ and $\A_{\pm}$ take values in $\OO_{\pm}(n)$ and are thus uniformly bounded on $\Ga$, we estimate
\begin{align*}
&\|(\A_+^{(k)})^{\T}\A_-^{(k)}-\A_+^{\T}\A_-\|_{L^2(\Ga)}\\
&\quad\leq\|(\A_+^{(k)})^{\T}\|_{L^{\ift}(\Ga)}\|\A_-^{(k)}-\A_-\|_{L^2(\Ga)}+\|\A_-\|_{L^{\ift}(\Ga)}\|(\A_+^{(k)})^{\T}-\A_+^{\T}\|_{L^2(\Ga)}\to 0
\end{align*}
as $k\to+\ift$. Passing to a further subsequence, $(\A_+^{(k)})^{\T}\A_-^{(k)}\to\A_+^{\T}\A_-$ a.e.\ on $\Ga$. Since each $(\A_+^{(k)})^{\T}\A_-^{(k)}\in\mathbb{P}_n$ a.e.\ and $\mathbb{P}_n$ is closed, we conclude $\A_+^{\T}\A_-\in\mathbb{P}_n$ a.e.\ on $\Ga$, so $\A\in\cA_{\Ga}$.
\end{proof}

Having established the existence, we derive the Euler--Lagrange equations characterizing every minimizer of \eqref{FunctionalProblem}.

\begin{lem}\label{EulerLagrange}
Suppose $\A=\A_{\pm}$ is a minimizer of \eqref{FunctionalProblem}. For any $\W=\W_{\pm}\in C_0^{\ift}(\om_{\pm}\cup\Ga,\mathbb{A}_n)$ satisfying $\W_+=\W_-$ $\HH^{d-1}$-a.e.\ on $\Ga$, we have
\begin{align}
\sum_{\zeta=\pm}\int_{\om_{\zeta}}\left((\A_{\zeta}^{\T}\na\A_{\zeta}):\na\W_{\zeta}+\f{\A_{\zeta}^{\T}(\A_{\zeta}-\wt{\A}_{\zeta})}{h}:\W_{\zeta}+(\mathbf{V}_{\zeta}\cdot\na\wt{\A}_{\zeta}):(\A_{\zeta}\W_{\zeta})\right)\ud x=0,\label{WeakEulerLagrange}\\
\sum_{\zeta=\pm}\int_{\om_{\zeta}}\left((\na\A_{\zeta}\A_{\zeta}^{\T}):\na\W_{\zeta}+\f{(\A_{\zeta}-\wt{\A}_{\zeta})\A_{\zeta}^{\T}}{h}:\W_{\zeta}+(\mathbf{V}_{\zeta}\cdot\na\wt{\A}_{\zeta}):(\W_{\zeta}\A_{\zeta})\right)\ud x=0.\label{WeakEulerLagrange1}
\end{align}
\end{lem}

\begin{proof}
Let $\W_{\pm}\in C_0^{\ift}(\om_{\pm}\cup\Ga,\mathbb{A}_n)$ satisfy $\W_+=\W_-$ on $\Ga$. Since $\W_{\pm}$ is antisymmetric, $\exp(t\W_{\pm})\in\OO_{\pm}(n)$ for each $t\in\R$. Moreover, the pair $(\A_+\exp(t\W_+),\A_-\exp(t\W_-))$ is admissible: writing $\W:=\W_{\pm}|_{\Ga}$, we have
\[
(\A_+\exp(t\W))^{\T}(\A_-\exp(t\W))=\exp(-t\W)\A_+^{\T}\A_-\exp(t\W)\in\mathbb{P}_n\quad\text{a.e.\ on }\Ga,
\]
and $ \det(\A_{\pm}\exp(tW))=\pm $. The minimizing property of $\A_{\pm}$ gives
\[
E_h(\mathbf{V},\wt{\A};\A\exp(t\W))\geq E_h(\mathbf{V},\wt{\A};\A)\quad\text{for all }t\in\R,
\]
so $t=0$ is a minimizer of $t\mapsto E_h(\mathbf{V},\wt{\A};\A\exp(t\W))$. Differentiating at $t=0$ and dividing by $2$ yields
\begin{align*}
0=\sum_{\zeta=\pm}\int_{\om_{\zeta}}\left(\na\A_{\zeta}:\na(\A_{\zeta}\W_{\zeta})+h^{-1}(\A_{\zeta}-\wt{\A}_{\zeta}):(\A_{\zeta}\W_{\zeta})+(\mathbf{V}_{\zeta}\cdot\na\wt{\A}_{\zeta}):(\A_{\zeta}\W_{\zeta})\right)\ud x.
\end{align*}
Using the product rule $\na(\A_{\zeta}\W_{\zeta})=(\na\A_{\zeta})\W_{\zeta}+\A_{\zeta}(\na\W_{\zeta})$ and the algebraic identities
\be
\begin{aligned}
&\na\A_{\pm}:((\na\A_{\pm})\W_{\pm})=0,\quad\na\A_{\pm}:(\A_{\pm}\na\W_{\pm})=(\A_{\pm}^{\T}\na\A_{\pm}):\na\W_{\pm},\\
&(\A_{\pm}-\wt{\A}_{\pm}):(\A_{\pm}\W_{\pm})=(\A_{\pm}^{\T}(\A_{\pm}-\wt{\A}_{\pm})):\W_{\pm},
\end{aligned}\label{Frobidentity}
\ee
we deduce \eqref{WeakEulerLagrange}. Here, the first identity holds because for each $\al$, $(\pa_{\al}\A_{\pm})^{\T}(\pa_{\al}\A_{\pm})$ is symmetric and $\W_{\pm}\in\mathbb{A}_n$ is antisymmetric; the second and third follows from the property of the Frobenius product. To obtain \eqref{WeakEulerLagrange1}, the argument is repeated using $\exp(t\W_{\pm})\A_{\pm}$ as the admissible competitor.
\end{proof}

\section{Constructions of weak solutions}

\subsection{The fixed-interface case: proof of Theorem \ref{ExistenceFix}}

We first consider the simpler case where the interface does not move, that is,
$ \Ga(t)=\Ga $ for all $ t $, with $ \Ga $ a stationary minimal hypersurface.
We first prove Theorem \ref{ExistenceFix} in this setting, since the argument is cleaner and already contains the main ideas used in the general case.

\subsubsection{Construction of approximating solutions}

Throughout this subsection, we assume that $ \om_{\pm}(t)=\om_{\pm} $. Let
$ \A_0=\A_{0,\pm}\in\mathcal{A}_{\Ga} $ be the initial data. Fix $ T>0 $,
$ N\in\Z_+ $, and set $ h:=\f{T}{N} $. We construct approximate solutions
$ \A_h=\A_{h,\pm} $ and $ \ol{\A}_h=\ol{\A}_{h,\pm} $ on
$ \om_{\pm}\times(0,T) $ as follows.

\begin{enumerate}[label=$\text{Step }\theenumi.$]
\item Set $ \A_h^0:=\A_h^0{}_{\pm}:=\A_{0,\pm} $.

\item We define a sequence inductively
$ \{\A_h^m\}_{m=0}^N:=\{\A_{h,\pm}^m\}_{m=0}^N\subset\mathcal{A}_{\Ga} $. Assume that $ \A_h^m $ has already been constructed. We then let
$ \A_h^{m+1}:=\A(\A_h^m;h)\in\mathcal{A}_{\Ga} $ be a minimizer of
\eqref{FunctionalProblem} with $ \mathbf{V}\equiv 0^d $. The existence of
$ \A_h^{m+1} $ follows from Proposition \ref{FA}.

\item For each $ m\in\Z\cap[0,N] $, define $ \A_h(\cdot,mh):=\A_h^m(\cdot) $. For $ x\in\om_{\pm} $ and $ t\in(mh,(m+1)h] $, set
\begin{align*}
\A_h(x,t)&:=\f{t-mh}{h}\A_h^{m+1}(x)+\(1-\f{t-mh}{h}\)\A_h^m(x),\\
\ol{\A}_h(x,t)&:=\A_h^{m+1}(x).
\end{align*}
\end{enumerate}

\subsubsection{Uniform estimates and convergence of \texorpdfstring{$ \A_h $}{} and \texorpdfstring{$ \ol{\A}_h $}{}}

We next derive the estimates needed to pass to the limit. By minimizing
the property of $ \A_h^{m+1} $, we have
\[
E_h(\A_h^m;\A_h^{m+1})\leq E_h(\A_h^m;\A_h^m).
\]
In particular,
\be
\sum_{\zeta=\pm}\int_{\om_{\zeta}}
\(\|\na\A_{h,\zeta}^{m+1}\|^2+h^{-1}\|\A_{h,\zeta}^{m+1}-\A_{h,\zeta}^m\|^2\)\ud x
\leq
\sum_{\zeta=\pm}\int_{\om_{\zeta}}\|\na\A_{h,\zeta}^m\|^2\ud x.
\label{Comparison}
\ee
Since
\[
\pa_t\A_h=\f{1}{h}(\A_h^{m+1}-\A_h^m)
\quad\text{for }t\in(mh,(m+1)h],
\]
it follows that
\be
\sum_{\zeta=\pm}\int_{\om_{\zeta}}\|\pa_t\A_{h,\zeta}(\cdot,t)\|^2\ud x
=\sum_{\zeta=\pm}\int_{\om_{\zeta}}h^{-2}\|\A_{h,\zeta}^{m+1}-\A_{h,\zeta}^m\|^2\ud x
\label{GeodesicEstimate}
\ee
for any $ t\in(mh,(m+1)h] $. Combining \eqref{Comparison} and
\eqref{GeodesicEstimate}, we obtain
\[
\sum_{\zeta=\pm}\int_{mh}^{(m+1)h}\int_{\om_{\zeta}}
\|\pa_t\A_{h,\zeta}(\cdot,t)\|^2\ud x\ud t
\leq
\sum_{\zeta=\pm}
\(\int_{\om_{\zeta}}\|\na\A_{h,\zeta}^m\|^2\ud x
-\int_{\om_{\zeta}}\|\na\A_{h,\zeta}^{m+1}\|^2\ud x\).
\]
Summing over $ m=0,\dots,N-1 $, and using \eqref{Comparison} once more, we get
\be
\sup_{0\leq t\leq T}
\(\sum_{\zeta=\pm}\int_{\om_{\zeta}}\|\na\ol{\A}_{h,\zeta}\|^2\ud x\)
+\sum_{\zeta=\pm}\int_0^T\int_{\om_{\zeta}}
\|\pa_t\A_{h,\zeta}\|^2\ud x\ud t\leq\sum_{\zeta=\pm}\int_{\om_{\zeta}}\|\na\A_{0,\zeta}\|^2\ud x.
\label{UniformEstimates}
\ee
Therefore,
\begin{gather*}
\{\ol{\A}_h\}\text{ is bounded in }L^{\ift}(0,T;H^1(\om_{\pm},\mathbb{M}_n)),\\
\{\A_h\}\text{ is bounded in }L^{\ift}(0,T;H^1(\om_{\pm},\mathbb{M}_n)),\\
\{\pa_t\A_h\}\text{ is bounded in }L^2(0,T;L^2(\om_{\pm},\mathbb{M}_n)).
\end{gather*}
Hence, there exist maps $ \A=\A_{\pm},\mathbf{B}=\mathbf{B}_{\pm}:\om_{\pm}\times(0,T)\to\mathbb{M}_n $
such that
\begin{align*}
&\A\in L^2(\om_{\pm}\times(0,T),\mathbb{M}_n),\quad
\na\A\in L^{\ift}(0,T;L^2(\om_{\pm},\mathbb{M}_n)),\\
&\pa_t\A\in L^2(\om_{\pm}\times(0,T),\mathbb{M}_n),\quad
\mathbf{B}\in L^{\ift}(0,T;H^1(\om_{\pm},\mathbb{M}_n)),
\end{align*}
and up to a subsequence as $ h\to 0^+ $,
\be
\begin{gathered}
\A_h\to\A\text{ strongly in }L^2(\om_{\pm}\times(0,T),\mathbb{M}_n),\\
\na\A_h\wc^*\na\A\text{ weakly* in }L^{\ift}(0,T;L^2(\om_{\pm},\mathbb{M}_n)),\\
\pa_t\A_h\wc\pa_t\A\text{ weakly in }L^2(\om_{\pm}\times(0,T),\mathbb{M}_n),\\
\ol{\A}_h\wc^*\mathbf{B}\text{ weakly* in }L^{\ift}(0,T;H^1(\om_{\pm},\mathbb{M}_n)).
\end{gathered}
\label{Convergence1}
\ee

We now compare $ \A_h $ and $ \ol{\A}_h $. By definition,
\[
\begin{aligned}
\A_h(x,t)-\ol{\A}_h(x,t)
&=
\f{t-mh}{h}\A_h^{m+1}(x)+\(1-\f{t-mh}{h}\)\A_h^m(x)-\A_h^{m+1}(x)\\
&=
\f{t-(m+1)h}{h}\(\A_h^{m+1}(x)-\A_h^m(x)\),
\end{aligned}
\]
for $ x\in\om_{\pm} $ and $ t\in(mh,(m+1)h] $. Consequently,
\[
\|\A_{h,\pm}(x,t)-\ol{\A}_{h,\pm}(x,t)\|^2=(t-(m+1)h)^2\left\|\f{\A_{h,\pm}^{m+1}(x)-\A_{h,\pm}^m(x)}{h}\right\|^2.
\]
Using \eqref{UniformEstimates}, for any $ t\in(mh,(m+1)h] $, we obtain
\[
\begin{aligned}
\sum_{\zeta=\pm}\int_{\om_{\zeta}}\|\A_{h,\zeta}-\ol{\A}_{h,\zeta}\|^2\ud x
&=
(t-(m+1)h)^2
\sum_{\zeta=\pm}\int_{\om_{\zeta}}
\left\|\f{\A_{h,\zeta}^{m+1}-\A_{h,\zeta}^m}{h}\right\|^2\ud x\\
&\leq
h\sum_{\zeta=\pm}\int_{\om_{\zeta}}\|\na\A_{0,\zeta}\|^2\ud x.
\end{aligned}
\]
Therefore,
\[
\sum_{\zeta=\pm}\int_{mh}^{(m+1)h}\int_{\om_{\zeta}}
\|\A_{h,\zeta}-\ol{\A}_{h,\zeta}\|^2\ud x\ud t
\leq
h^2\sum_{\zeta=\pm}\int_{\om_{\zeta}}\|\na\A_{0,\zeta}\|^2\ud x.
\]
Summing over $ m=0,\dots,N-1 $, we get
\be
\sum_{\zeta=\pm}\int_0^T\int_{\om_{\zeta}}
\|\A_{h,\zeta}-\ol{\A}_{h,\zeta}\|^2\ud x\ud t
\leq Th\sum_{\zeta=\pm}\int_{\om_{\zeta}}\|\na\A_{0,\zeta}\|^2\ud x.
\label{AAOL}
\ee
Letting $ h\to 0^+ $ and using \eqref{Convergence1}, we conclude that
\be
\ol{\A}_h\to\A\text{ strongly in }L^2(\om_{\pm}\times(0,T),\mathbb{M}_n).
\label{ConOlAA}
\ee
Hence $ \mathbf{B}=\A $ holds a.e. on $ \om_{\pm}\times(0,T) $. Since
\[
\ol{\A}_h\in L^{\ift}(0,T;H^1(\om_{\pm},\OO_{\pm}(n))),
\]
we may pass to a further subsequence and assume that the convergence in
\eqref{ConOlAA} holds a.e. It follows that
\[
\A=\mathbf{B}:\om_{\pm}\times(0,T)\to\OO_{\pm}(n).
\]

\subsubsection{Proof that \texorpdfstring{$ (\A_+,\A_-) $}{} is a minimal pair}

We next show that the limit preserves the minimal pair condition on the
interface. Fix $ 0<\lda<1 $ and define
\[
\A_{\lda,h}(x,t):=\left\{
\begin{aligned}
&\f{(m+1-\lda)h-t}{(1-\lda)h}\A_h^m(x)
+\f{t-mh}{(1-\lda)h}\A_h^{m+1}(x),
&&t\in(mh,(m+1-\lda)h],\\
&\A_h^{m+1}(x),
&&t\in((m+1-\lda)h,(m+1)h].
\end{aligned}
\right.
\]
Then
\[
\pa_t\A_{\lda,h}(x,t)=\left\{
\begin{aligned}
&\f{1}{(1-\lda)h}\(\A_h^{m+1}(x)-\A_h^m(x)\),
&&t\in(mh,(m+1-\lda)h],\\
&0,
&&t\in((m+1-\lda)h,(m+1)h],
\end{aligned}
\right.
\]
and
\[
\A_{\lda,h}(x,t)-\ol{\A}_h(x,t)=
\left\{
\begin{aligned}
&\f{t-(m+1-\lda)h}{(1-\lda)h}\(\A_h^{m+1}(x)-\A_h^m(x)\),
&&t\in(mh,(m+1-\lda)h],\\
&0,
&&t\in((m+1-\lda)h,(m+1)h].
\end{aligned}
\right.
\]
It follows that
\begin{align*}
\|\pa_t\A_{\lda,h,\pm}(x,t)\|
&\leq\f{1}{1-\lda}\|\pa_t\A_{h,\pm}(x,t)\|,\\
\|\na\A_{\lda,h,\pm}(x,t)\|
&\leq \|\na\A_{h,\pm}^m(x)\|+\|\na\A_{h,\pm}^{m+1}(x)\|,\\
\|\A_{\lda,h,\pm}(x,t)-\ol{\A}_{h,\pm}(x,t)\|
&\leq\f{\max\{1-\lda,\lda\}h}{1-\lda}
\left\|\f{\A_{h,\pm}^{m+1}(x)-\A_{h,\pm}^m(x)}{h}\right\|.
\end{align*}
Using \eqref{Comparison}, \eqref{UniformEstimates}, and \eqref{AAOL}, we obtain
\begin{gather*}
\{\A_{\lda,h}\}\text{ is bounded in }L^{\ift}(0,T;H^1(\om_{\pm},\mathbb{M}_n)),\\
\{\pa_t\A_{\lda,h}\}\text{ is bounded in }L^2(0,T;L^2(\om_{\pm},\mathbb{M}_n)),
\end{gather*}
and
\[
\lim_{h\to 0^+}
\|\A_{\lda,h,\pm}-\ol{\A}_{h,\pm}\|_{L^2(\om_{\pm}\times(0,T),\mathbb{M}_n)}=0.
\]
Up to a subsequence,
\begin{gather*}
\A_{\lda,h}\to\A\text{ strongly in }L^2(\om_{\pm}\times(0,T),\mathbb{M}_n),\\
\na\A_{\lda,h}\wc^*\na\A\text{ weakly* in }L^{\ift}(0,T;L^2(\om_{\pm},\mathbb{M}_n)),\\
\pa_t\A_{\lda,h}\wc\pa_t\A\text{ weakly in }L^2(\om_{\pm}\times(0,T),\mathbb{M}_n).
\end{gather*}
In particular, $ \A_{\lda,h}\wc\A $ weakly in
$ H^1(\om_{\pm}\times(0,T),\mathbb{M}_n) $. By the trace theorem,
\[
\A_{\lda,h}\to\A\text{ strongly in }L^2(0,T;L^2(\pa\om_{\pm},\mathbb{M}_n)).
\]
Passing to a further subsequence, we may also assume that
\be
\A_{\lda,h}\to\A,\quad \HH^d\text{-a.e. on }\Ga\times(0,T).
\label{AEC}
\ee

For each $ h $, define
\begin{align*}
M_{\lda,h}(\Ga,T)
&:=
\{(x,t)\in\Ga\times(0,T):(\A_{\lda,h,+},\A_{\lda,h,-})(x,t)
\text{ satisfies the minimal pair condition}\},\\
M_{\A}(\Ga,T)
&:=
\{(x,t)\in\Ga\times(0,T):(\A_+,\A_-)(x,t)
\text{ satisfies the minimal pair condition}\}.
\end{align*}
Since $ (\A_{h,+}^m,\A_{h,-}^m) $ is a minimal pair for every $ m $, we have
\[
\HH^d(M_{\lda,h}(\Ga,T))
=
\lda\,\HH^d(\Ga\times(0,T)).
\]
By the basic measure-theoretic property of the limsup of measurable sets,
\be
\HH^d\(\limsup_{N\to+\ift}M_{\lda,h}(\Ga,T)\)
\geq
\limsup_{N\to+\ift}\HH^d(M_{\lda,h}(\Ga,T))=\lda\,\HH^d(\Ga\times(0,T)),
\label{EstimateHausdorff}
\ee
where $ h=\f{T}{N} $. We now show that, up to a $ \HH^d $-null set,
\[
\limsup_{N\to+\ift}M_{\lda,h}(\Ga,T)\subset M_{\A}(\Ga,T).
\]
Define
\[
C_{\A}(\Ga,T)
:=
\{(x,t)\in\Ga\times(0,T):\A_{\lda,h}(x,t)\to\A(x,t)\text{ as }h\to 0^+\}.
\]
By \eqref{AEC}, the complement of $ C_{\A}(\Ga,T) $ has zero
$ \HH^d $-measure. We claim that
\be
C_{\A}(\Ga,T)\cap\(\limsup_{N\to+\ift}M_{\lda,h}(\Ga,T)\)
\subset M_{\A}(\Ga,T).
\label{inclusion}
\ee
Indeed, let
\[
(x,t)\in C_{\A}(\Ga,T)\cap\(\limsup_{N\to+\ift}M_{\lda,h}(\Ga,T)\).
\]
Then there exists a sequence $ \{h_k\}_{k\in\Z_+} $ with $ h_k\to 0^+ $ such
that $ (\A_{\lda,h_k,+},\A_{\lda,h_k,-})(x,t) $ is a minimal pair for every
$ k $. Hence, there are vectors $ \n^{(k)}\in\R^n $, $ |\n^{(k)}|=1 $ such that
\[
\A_{\lda,h_k,+}^{\T}(x,t)\A_{\lda,h_k,-}(x,t)
=
\I-2\n^{(k)}\otimes\n^{(k)}.
\]
Since $ (x,t)\in C_{\A}(\Ga,T) $, we have
\[
\lim_{k\to+\ift}
\A_{\lda,h_k,+}^{\T}(x,t)\A_{\lda,h_k,-}(x,t)
=
\A_+^{\T}(x,t)\A_-(x,t).
\]
Passing to a subsequence if necessary, we may assume that
$ \n^{(k)}\to\n\in\mathbb{S}^{n-1} $. Therefore,
\[
\A_+^{\T}(x,t)\A_-(x,t)=\I-2\n\otimes\n,
\]
which shows that $ (\A_+,\A_-) $ satisfies the minimal pair condition at
$ (x,t) $. This proves \eqref{inclusion}.

Combining \eqref{EstimateHausdorff} and \eqref{inclusion}, and using that
$ \HH^d((\Ga\times(0,T))\setminus C_{\A}(\Ga,T))=0 $, we obtain
\[
\HH^d(M_{\A}(\Ga,T))
\geq
\HH^d\(C_{\A}(\Ga,T)\cap\(\limsup_{N\to+\ift}M_{\lda,h}(\Ga,T)\)\)
\geq
\lda\,\HH^d(\Ga\times(0,T)).
\]
Letting $ \lda\to 1^- $, we conclude that $ (\A_+,\A_-) $ satisfies the
minimal pair condition for $ \HH^d $-a.e. point on $ \Ga\times(0,T) $.

\subsubsection{Showing that the limit \texorpdfstring{$ \A_{\pm} $}{} is a weak solution}

By Remark \ref{RelationWeak}, it suffices to verify that for every
$ \Psi=\Psi_{\pm}\in C_0^{\ift}((\om_{\pm}\cup\Ga)\times(0,T),\mathbb{A}_n) $
satisfying \eqref{BoundaryTest}, the limit $ \A_{\pm} $ satisfies
\eqref{WeakNeumann}. Applying \eqref{WeakEulerLagrange}, we obtain
\[
\sum_{\zeta=\pm}\int_{\om_{\zeta}}
\(\f{1}{h}\((\A_{h,\zeta}^{m+1})^{\T}(\A_{h,\zeta}^{m+1}-\A_{h,\zeta}^m)\):\Psi_{\zeta}
+\((\A_{h,\zeta}^{m+1})^{\T}\na\A_{h,\zeta}^{m+1}\):\na\Psi_{\zeta}\)\ud x=0.
\]
Integrating over $ t\in(mh,(m+1)h] $, we get
\[
\sum_{\zeta=\pm}\int_{mh}^{(m+1)h}\int_{\om_{\zeta}}
\(\f{1}{h}\((\A_{h,\zeta}^{m+1})^{\T}(\A_{h,\zeta}^{m+1}-\A_{h,\zeta}^m)\):\Psi_{\zeta}
+\((\A_{h,\zeta}^{m+1})^{\T}\na\A_{h,\zeta}^{m+1}\):\na\Psi_{\zeta}\)\ud x\ud t=0.
\]
Summing over $ m=0,\dots,N-1 $ and using the definitions of $ \A_h $ and
$ \ol{\A}_h $, we arrive at
\be
\sum_{\zeta=\pm}\int_0^T\int_{\om_{\zeta}}
\((\ol{\A}_{h,\zeta}^{\T}\pa_t\A_{h,\zeta}):\Psi_{\zeta}
+(\ol{\A}_{h,\zeta}^{\T}\na\ol{\A}_{h,\zeta}):\na\Psi_{\zeta}\)\ud x\ud t=0.
\label{eq2}
\ee

We now pass to the limit as $ h\to 0^+ $. We claim that
\be
\begin{aligned}
&\sum_{\zeta=\pm}\int_0^T\int_{\om_{\zeta}}
\((\ol{\A}_{h,\zeta}^{\T}\pa_t\A_{h,\zeta}):\Psi_{\zeta}
+(\ol{\A}_{h,\zeta}^{\T}\na\ol{\A}_{h,\zeta}):\na\Psi_{\zeta}\)\ud x\ud t\\
&\quad\to
\sum_{\zeta=\pm}\int_0^T\int_{\om_{\zeta}}
\((\A_{\zeta}^{\T}\pa_t\A_{\zeta}):\Psi_{\zeta}
+(\A_{\zeta}^{\T}\na\A_{\zeta}):\na\Psi_{\zeta}\)\ud x\ud t.
\end{aligned}
\label{Convergence2}
\ee
Indeed,
\begin{align*}
&\sum_{\zeta=\pm}\int_0^T\int_{\om_{\zeta}}
(\ol{\A}_{h,\zeta}^{\T}\pa_t\A_{h,\zeta}):\Psi_{\zeta}\ud x\ud t
-
\sum_{\zeta=\pm}\int_0^T\int_{\om_{\zeta}}
(\A_{\zeta}^{\T}\pa_t\A_{\zeta}):\Psi_{\zeta}\ud x\ud t\\
&\quad=
\sum_{\zeta=\pm}\int_0^T\int_{\om_{\zeta}}
\((\ol{\A}_{h,\zeta}-\A_{\zeta})^{\T}\pa_t\A_{h,\zeta}\):\Psi_{\zeta}\ud x\ud t\\
&\quad\quad\quad+
\sum_{\zeta=\pm}\int_0^T\int_{\om_{\zeta}}
\(\A_{\zeta}^{\T}(\pa_t\A_{h,\zeta}-\pa_t\A_{\zeta})\):\Psi_{\zeta}\ud x\ud t.
\end{align*}
Since $ \pa_t\A_{h,\pm} $ is uniformly bounded in
$ L^2(\om_{\pm}\times(0,T),\mathbb{M}_n) $ and
$ \ol{\A}_{h,\pm}\to\A_{\pm} $ strongly in
$ L^2(\om_{\pm}\times(0,T),\mathbb{M}_n) $, the first term tends to $ 0 $.
Moreover, since
$ \pa_t\A_{h,\pm}\wc\pa_t\A_{\pm} $ weakly in
$ L^2(\om_{\pm}\times(0,T),\mathbb{M}_n) $ and
$ \A_{\pm}\Psi_{\pm}\in L^2(\om_{\pm}\times(0,T),\mathbb{M}_n) $, the second
term also tends to $ 0 $.

The second term in \eqref{Convergence2} is handled in the same way. Indeed,
$ \na\ol{\A}_{h,\pm}\wc\na\A_{\pm} $ weakly in
$ L^2(\om_{\pm}\times(0,T),\mathbb{M}_n) $ and
$ \ol{\A}_{h,\pm}\to\A_{\pm} $ strongly in
$ L^2(\om_{\pm}\times(0,T),\mathbb{M}_n) $, so
\[
\lim_{h\to 0^+}\int_0^T\int_{\om_{\pm}}
(\ol{\A}_{h,\pm}^{\T}\na\ol{\A}_{h,\pm}):\na\Psi_{\pm}\ud x\ud t
=
\int_0^T\int_{\om_{\pm}}
(\A_{\pm}^{\T}\na\A_{\pm}):\na\Psi_{\pm}\ud x\ud t.
\]
This proves \eqref{Convergence2}. Combining \eqref{eq2} and
\eqref{Convergence2}, we obtain \eqref{WeakNeumann}.

Next, we apply \eqref{WeakEulerLagrange1} to $ \A_{h,\pm}^{m+1} $. Arguing as above, we obtain
\[
\sum_{\zeta=\pm}\int_0^T\int_{\om_{\zeta}}
\((\pa_t\A_{h,\zeta}\ol{\A}_{h,\zeta}^{\T}):\Psi_{\zeta}
+(\na\ol{\A}_{h,\zeta}\ol{\A}_{h,\zeta}^{\T}):\na\Psi_{\zeta}\)\ud x\ud t=0.
\]
Passing to the limit exactly as before yields \eqref{WeakNeumann1}. Hence
$ \A_{\pm} $ is a weak solution of \eqref{HeatSystems}.

It remains to verify the initial condition. Since
$ \A_h\wc\A $ weakly in $ H^1(\om_{\pm}\times(0,T),\mathbb{M}_n) $, we have $ \A(0)=\A_0 $ in the sense of trace. We also want the stronger convergence stated in \eqref{initial1}. By \eqref{Comparison}, for any
$ m\geq 0 $,
\[
\sum_{\zeta=\pm}\int_{\om_{\zeta}}\|\A_{h,\zeta}^{m+1}-\A_{h,\zeta}^m\|^2\ud x\leq
h\(\sum_{\zeta=\pm}\int_{\om_{\zeta}}\|\na\A_{h,\zeta}^m\|^2\ud x\)\leq
h\(\sum_{\zeta=\pm}\int_{\om_{\zeta}}\|\na\A_{0,\zeta}\|^2\ud x\).
\]
Therefore, if $ t\in(mh,(m+1)h] $, then
\[
\sum_{\zeta=\pm}\int_{\om_{\zeta}}\|\ol{\A}_{h,\zeta}(t)-\A_{0,\zeta}\|^2\ud x
\leq
C(t+h)\(\sum_{\zeta=\pm}\int_{\om_{\zeta}}\|\na\A_{0,\zeta}\|^2\ud x\).
\]
Letting $ h\to 0^+ $, and extracting a subsequence if necessary, we obtain
\be
\sum_{\zeta=\pm}\int_{\om_{\zeta}}\|\A_{\zeta}(t)-\A_{0,\zeta}\|^2\ud x
\leq Ct\(\sum_{\zeta=\pm}\int_{\om_{\zeta}}\|\na\A_{0,\zeta}\|^2\ud x\)
\label{ProofInitial}
\ee
for a.e. $ t\in(0,T) $. Since
$ \pa_t\A\in L^2(0,T;L^2(\om_{\pm},\mathbb{M}_n)) $, the map
$ t\mapsto \A(\cdot,t) $ is absolutely continuous as an
$ L^2(\om_{\pm},\mathbb{M}_n) $-valued function. Hence \eqref{ProofInitial}
holds for every $ t\in(0,T) $, and \eqref{initial1} follows.

\subsection{The non-fixed-interface case: proof of Theorem \ref{ExistenceNonFix}}

We now turn to the case where $ \Ga(t) $ moves with time and prove
Theorem \ref{ExistenceNonFix}. By the local existence theory for mean curvature
flow, there exists $ T_0>0 $ such that, for every $ 0<T<T_0 $, the hypersurface
$ \Ga(t) $ remains smooth for all $ t\in[0,T] $. Fix such a $ T\in(0,T_0) $,
and write $ h=\f{T}{N} $ with $ N\in\Z_+ $. If $ N $ is sufficiently large,
equivalently if $ h\leq h_0 $ for some small constant $ h_0>0 $ depending only
on $ \om $, $ T_0 $, and $ \Ga(0) $, then there exist diffeomorphisms
\[
\Phi_h^m(\cdot,t):=\Phi_{h,\pm}^m(\cdot,t)
=(\Phi_{h,\pm}^{m,\al}(\cdot,t))_{\al=1}^d:\om_{\pm}(t)\to\om_{\pm}(mh),
\quad t\in(mh,(m+1)h].
\]
Here $ \al\in\Z\cap[1,d] $ denotes the $ \al^{\op{th}} $ component of
$ \Phi_h^m $. We assume that the family
$ \{\Phi_h^m\}_{m=0}^{N-1} $ satisfies the uniform estimates
\begin{align}
\sup_{m,h}\sup_{t\in(mh,(m+1)h]}
\|D\Phi_{h,\pm}^m(\cdot,t)-\I_d\|_{L^{\ift}(\om_{\pm}(t),\mathbb{M}_d)}
&\leq C_0h,\label{EstimateDPhi}\\
\sup_{m,h}\sup_{\al,\beta,\ell}\sup_{t\in(mh,(m+1)h]}
\|\pa_t^{\ell}\pa_{\beta}\Phi_{h,\pm}^{m,\al}(\cdot,t)\|_{L^{\ift}(\om_{\pm}(t))}
&\leq C_1,\label{EstimatepatPhi}
\end{align}
where $ C_0,C_1>0 $ depend only on $ \om $, $ T_0 $, and $ \Ga(0) $, while
$ \ell=1,2 $ and $ \al,\beta\in\Z\cap[1,d] $.

Since $ \om_{\pm}(t) $ is smooth for $ t\in[0,T] $, we may also define
piecewise maps $ \Phi_h(\cdot,\cdot):\om_{T,\pm}\to\R^d $ by setting
$ \Phi_h(\cdot,t)=\Phi_{h,\pm}^m(\cdot,t) $ for $ t\in(mh,(m+1)h] $. For every
compact set $ K_{T,\pm}\subset\subset\om_{T,\pm} $, we assume that
\be
\sup_{\al,\beta,\ga\in\Z\cap[1,d]}
\|\pa_{\al}\pa_{\beta}\Phi_{h,\pm}^{\ga}\|_{L^{\ift}(K_{T,\pm})}
\leq C_2,
\label{EstimateD2Phi}
\ee
where $ C_2 $ depends only on $ \om $, $ T_0 $, $ \Ga(0) $, and $ K_{T,\pm} $.
For clarity, note that \eqref{EstimateDPhi} means
\[
\sup_{m,h}\sup_{\al,\beta}\sup_{t\in(mh,(m+1)h]}
\|\pa_{\al}\Phi_{h,\pm}^{m,\beta}(\cdot,t)-\delta_{\al\beta}\|_{L^{\ift}(\om_{\pm}(t))}
\leq C_0h,
\]
where $ \delta_{\al\beta}=1 $ if $ \al=\beta $ and $ 0 $ otherwise. For later
use, we also define
\be
\ol{\Phi}_h^m(\cdot,t):=\ol{\Phi}_{h,\pm}^m(\cdot,t)
:=
(\Phi_{h,\pm}^m(\cdot,(m+1)h))^{-1}(\Phi_{h,\pm}^m(\cdot,t))
:\om_{\pm}(t)\to\om_{\pm}((m+1)h).
\label{PhiOl}
\ee

\subsubsection{Construction of approximating solutions}

We now construct $ \wt{\A}_h $ and $ \ol{\A}_h $, following the scheme used in
the fixed-interface case.

\begin{enumerate}[label=$\text{Step \theenumi}.$]
\item Let $ \A_0=\A_{0,\pm}\in\mathcal{A}_{\Ga(0)} $ and set
$ \A_h^0:=\A_0 $.

\item We define $ \A_h^m:=\A_{h,\pm}^m $ inductively for
$ m\in\Z\cap[0,N] $. Assume that $ \A_h^m $ has already been defined. Set
\[
\wt{\A}_h^m(\cdot):=\wt{\A}_{h,\pm}^m(\cdot)
:=
\A_{h,\pm}^m(\Phi_{h,\pm}^m(\cdot,(m+1)h))
\]
and
\[
\mathbf{V}_h^m(\cdot):=\mathbf{V}_{h,\pm}^m(\cdot)
:=
\pa_t\Phi_{h,\pm}^m(\cdot,(m+1)h^-)
:\om_{\pm}((m+1)h)\to\R^d.
\]
We then define $ \A_h^{m+1} $ by
\[
\A_h^{m+1}
:=
\A(\mathbf{V}_h^m,\wt{\A}_h^m;h)
:=
\mathop{\argmin}_{\A\in\mathcal{A}_{\Ga((m+1)h)}}
E_h(\mathbf{V}_h^m,\wt{\A}_h^m;\A).
\]

\item For $ t\in(mh,(m+1)h] $ and $ x\in\om_{\pm}(mh) $, we define
$ \A_h(x,t) $ by linear interpolation between $ \A_h^m $ and the pullback of
$ \A_h^{m+1} $:
\[
\A_h(x,t)=\(1-\f{t-mh}{h}\)\A_h^m(x)
+\f{t-mh}{h}\A_h^{m+1}((\Phi_h^m)^{-1}(x,(m+1)h)).
\]

\item Finally, for $ t\in(mh,(m+1)h] $ and $ x\in\om_{\pm}(t) $, we set
\begin{align*}
\wt{\A}_h(x,t)&:=\A_h(\Phi_h^m(x,t),t),\\
\ol{\A}_h(x,t)&:=\A_h^{m+1}(\ol{\Phi}_h^m(x,t)).
\end{align*}
\end{enumerate}

\subsubsection{Uniform estimates and convergence of \texorpdfstring{$ \wt{\A}_h $}{} and \texorpdfstring{$ \ol{\A}_h $}{}}

For simplicity, set $ t_k:=kh $ for $ k\in\Z\cap[0,N] $. Since $ \A_h^{m+1} $ is defined by a minimizing problem at time $ t_{m+1} $, we may use the transported field $ \A_h^m(\Phi_h^m(\cdot,t_{m+1})) $ as a competitor and obtain
\begin{align*}
&\sum_{\zeta=\pm}\int_{\om_{\zeta}(t_{m+1})}\left(\|\na\A_{h,\zeta}^{m+1}\|^2+h^{-1}\|\A_{h,\zeta}^{m+1}(\cdot)-\A_{h,\zeta}^m(\Phi_{h,\zeta}^m(\cdot,t_{m+1}))\|^2\right)\ud x\\
&\leq\sum_{\zeta=\pm}\int_{\om_{\zeta}(t_{m+1})}\|\na\{\A_{h,\zeta}^m(\Phi_{h,\zeta}^m(\cdot,t_{m+1}))\}\|^2\ud x\\
&\quad-\sum_{\zeta=\pm}\int_{\om_{\zeta}(t_{m+1})}\left\{2\pa_t\Phi_{h,\zeta}^m(\cdot,t_{m+1}^-)\cdot\na\left[\A_{h,\zeta}^m(\Phi_{h,\zeta}^m(\cdot,t_{m+1}))\right]\right\}:\{\A_{h,\zeta}^{m+1}(\cdot)-\A_{h,\zeta}^m(\Phi_{h,\zeta}^m(\cdot,t_{m+1}))\}\ud x,
\end{align*}
where
\begin{align*}
&\pa_t\Phi_{h,\pm}^m(x,t_{m+1}^-)\cdot\na\{\A_{h,\pm}^m(\Phi_{h,\pm}^m(x,t_{m+1}))\}\\
&\quad=\sum_{\al=1}^d\pa_t\Phi_{h,\pm}^{m,\al}(x,t_{m+1}^-)\pa_{\al}\{\A_{h,\pm}^m(\Phi_{h,\pm}^m(x,t_{m+1}))\}.
\end{align*}
Using \eqref{EstimatepatPhi} and Young's inequality,
\[
ab\leq\frac{\delta}{2}a^2+\frac{1}{2\delta}b^2 \quad \text{for } a,b,\delta>0,
\]
we obtain
\be
\begin{aligned}
&\sum_{\zeta=\pm}\int_{\om_{\zeta}(t_{m+1})}\left(\|\na\A_{h,\zeta}^{m+1}\|^2+h^{-1}\|\A_{h,\zeta}^{m+1}(\cdot)-\A_{h,\zeta}^m(\Phi_{h,\zeta}^m(\cdot,t_{m+1}))\|^2\right)\ud x\\
&\quad\leq\sum_{\zeta=\pm}\int_{\om_{\zeta}(t_{m+1})}(1+Ch)\|\na\{\A_{h,\zeta}^m(\Phi_{h,\zeta}^m(\cdot,t_{m+1}))\}\|^2\ud x\\
&\quad\quad\quad+\sum_{\zeta=\pm}\int_{\om_{\zeta}(t_{m+1})}(2h)^{-1}\|\A_{h,\zeta}^{m+1}(\cdot)-\A_{h,\zeta}^m(\Phi_{h,\zeta}^m(\cdot,t_{m+1}))\|^2\ud x.
\end{aligned}\label{CauchyFirst}
\ee
By the chain rule,
\begin{align*}
\na\{\A_{h,\pm}^m(\Phi_{h,\pm}^m(x,t_{m+1}))\}
=\{(\na\A_{h,\pm}^{m})(\Phi_{h,\pm}^m(x,t_{m+1}))\}D\Phi_{h,\pm}^m(x,t_{m+1}).
\end{align*}
Combining this identity with \eqref{EstimateDPhi} and \eqref{CauchyFirst}, we infer that
\be
\begin{aligned}
&\sum_{\zeta=\pm}\int_{\om_{\zeta}(t_{m+1})}\left(\|\na\A_{h,\zeta}^{m+1}\|^2+(2h)^{-1}\|\A_{h,\zeta}^{m+1}(\cdot)-\A_{h,\zeta}^m(\Phi_{h,\zeta}^m(\cdot,t_{m+1}))\|^2\right)\ud x\\
&\quad\leq\sum_{\zeta=\pm}\int_{\om_{\zeta}(t_m)}(1+Ch)\|\na\A_{h,\zeta}^m(\cdot)D\Phi_{h,\zeta}^m((\Phi_{h,\zeta}^m)^{-1}(\cdot,t_{m+1}),t_{m+1})\|^2\op{J}((\Phi_{h,\zeta}^m)^{-1}(\cdot,t_{m+1}))\ud x\\
&\quad\leq(1+\wt{C}h)\left(\sum_{\zeta=\pm}\int_{\om_{\zeta}(t_m)}\|\na\A_{h,\zeta}^m\|^2\ud x\right),
\end{aligned}\label{Comparison2}
\ee
where $ \op{J}(\cdot) $ denotes the Jacobi determinant. Here we used the change-of-variables formula together with \eqref{EstimateDPhi} and \eqref{EstimatepatPhi}. The constant $ \wt{C} $ depends only on $ \om $, $ T_0 $, and $ \Ga(0) $. From \eqref{Comparison2}, we deduce that
\be
\begin{aligned}
&e^{-\wt{C}t_{m+1}}\left\{\sum_{\zeta=\pm}\int_{\om_{\zeta}(t_{m+1})}\left(\|\na\A_{h,\zeta}^{m+1}\|^2+\frac{h}{2}\left\|\frac{\A_{h,\zeta}^{m+1}(\cdot)-\A_{h,\zeta}^m(\Phi_{h,\zeta}^m(\cdot,t_{m+1}))}{h}\right\|^2\right)\ud x\right\}\\
&\quad\leq e^{-\wt{C}t_{m+1}}(1+\wt{C}h)\left(\sum_{\zeta=\pm}\int_{\om_{\zeta}(t_m)}\|\na\A_{h,\zeta}^m\|^2\ud x\right)\\
&\quad\leq e^{-\wt{C}t_m}\left(\sum_{\zeta=\pm}\int_{\om_{\zeta}(t_m)}\|\na\A_{h,\zeta}^m\|^2\ud x\right),
\end{aligned}\label{Comparison3}
\ee
where the last inequality follows from $ e^{\wt{C}h}\geq 1+\wt{C}h $. Therefore,
\be
\begin{aligned}
&\sum_{\zeta=\pm}\int_{\om_{\zeta}(t_{m+1})}\frac{h}{2}\left\|\frac{\A_{h,\zeta}^{m+1}(\cdot)-\A_{h,\zeta}^m(\Phi_{h,\zeta}^m(\cdot,t_{m+1}))}{h}\right\|^2\ud x\\
&\leq\sum_{\zeta=\pm}e^{\wt{C}T}\left(e^{-\wt{C}t_m}\int_{\om_{\zeta}(t_m)}\|\na\A_{h,\zeta}^m\|^2\ud x-e^{-\wt{C}t_{m+1}}\int_{\om_{\zeta}(t_{m+1})}\|\na\A_{h,\zeta}^{m+1}\|^2\ud x\right).
\end{aligned}\label{Apriori1}
\ee
Iterating \eqref{Comparison3}, we obtain
\be
\begin{aligned}
&e^{-\wt{C}t_{m+1}}\left(\sum_{\zeta=\pm}\int_{\om_{\zeta}(t_{m+1})}\|\na\A_{h,\zeta}^{m+1}\|^2\ud x\right)\leq e^{-\wt{C}t_m}\left(\sum_{\zeta=\pm}\int_{\om_{\zeta}(t_m)}\|\na\A_{h,\zeta}^m\|^2\ud x\right)\\
&\quad\Ra e^{-\wt{C}t_m}\left(\sum_{\zeta=\pm}\int_{\om_{\zeta}(t_m)}\|\na\A_{h,\zeta}^m\|^2\ud x\right)\leq\ldots\leq\sum_{\zeta=\pm}\int_{\om_{\zeta}(0)}\|\na\A_{0,\zeta}\|^2\ud x\\
&\quad\Ra \sum_{\zeta=\pm}\int_{\om_{\zeta}(t_m)}\|\na\A_{h,\zeta}^m\|^2\ud x\leq e^{\wt{C}T}\left(\sum_{\zeta=\pm}\int_{\om_{\zeta}(0)}\|\na\A_{0,\zeta}\|^2\ud x\right).
\end{aligned}\label{Apriori2}
\ee
Combining \eqref{Apriori1} and \eqref{Apriori2}, we arrive at
\be
\sum_{\zeta=\pm}\int_{\om_{\zeta}(t_{m+1})}h\left\|\frac{\A_{h,\zeta}^{m+1}(\cdot)-\A_{h,\zeta}^m(\Phi_{h,\zeta}^m(\cdot,t_{m+1}))}{h}\right\|^2\ud x\leq Ce^{\wt{C}T}\left(\sum_{\zeta=\pm}\int_{\om_{\zeta}(0)}\|\na\A_{0,\zeta}\|^2\ud x\right).\label{Apriori21}
\ee
The constants $ C $ in \eqref{Apriori2} and \eqref{Apriori21} depend only on $ \om $, $ \Ga(0) $, and $ T_0 $. Applying the chain rule once more, we compute for $ t\in(t_m,t_{m+1}] $ that
\be
\begin{aligned}
\pa_t\wt{\A}_h(x,t)&=\frac{1}{h}\left\{\A_h^{m+1}(\ol{\Phi}_h^m(x,t))-\A_h^m(\Phi_h^m(x,t))\right\}\\
&\quad+\left(\frac{t_{m+1}-t}{h}\right)\pa_t\Phi_h^m(x,t)\cdot\{(\na\A_h^m)(\Phi_h^m(x,t))\}\\
&\quad+\left(\frac{t-t_m}{h}\right)\pa_t\Phi_h^m(x,t)\cdot\left\{\left\{\na\left[\A_h^{m+1}((\Phi_h^m)^{-1}(\cdot,t_{m+1}))\right]\right\}(\Phi_h^m(x,t))\right\},
\end{aligned}\label{Formula2}
\ee
where $ \ol{\Phi}_h^m $ is defined in \eqref{PhiOl}. For later use, we write
\[
\pa_t\wt{\A}_h(x,t):=\frac{1}{h}\left\{\A_h^{m+1}((\Phi_h^m)^{-1}(\Phi_h^m(x,t),t_{m+1}))-\A_h^m(\Phi_h^m(x,t))\right\}+\mathbf{C}_h^m(x,t)+\mathbf{D}_h^m(x,t)
\]
for $ t\in(t_m,t_{m+1}] $, where
\be
\begin{aligned}
\mathbf{C}_h^m(x,t)&:=\left(\frac{t_m-t}{h}\right)\pa_t\Phi_h^m(x,t)\cdot\left\{\left\{(\na\A_h^m)-\left\{\na\left[\A_h^{m+1}((\Phi_h^m)^{-1}(\cdot,t_{m+1}))\right]\right\}\right\}|_{(\Phi_h^m(x,t))}\right\},\\
\mathbf{D}_h^m(x,t)&:=\pa_t\Phi_h^m(x,t)\cdot\{(\na\A_h^m)(\Phi_h^m(x,t))\},
\end{aligned}\label{CDFunction}
\ee
and we define
\[
\mathbf{C}_h(x,t):=\mathbf{C}_h^m(x,t),\quad \mathbf{D}_h(x,t):=\mathbf{D}_h^m(x,t)\quad\text{for } t\in(t_m,t_{m+1}].
\]
Using \eqref{Formula2}, the change-of-variable formula, and the bounds in \eqref{EstimateDPhi} and \eqref{EstimatepatPhi}, we obtain for every $ t\in(t_m,t_{m+1}] $ that
\be
\begin{aligned}
&\sum_{\zeta=\pm}\int_{\om_{\zeta}(t)}\|\pa_t\wt{\A}_{h,\zeta}(\cdot,t)\|^2\ud x\\
&\quad\leq\sum_{\zeta=\pm}\int_{\om_{\zeta}(t)}\left\|\frac{\A_{h,\zeta}^{m+1}(\ol{\Phi}_{h,\zeta}^m(\cdot,t))-\A_{h,\zeta}^m(\Phi_{h,\zeta}^m(\cdot,t))}{h}\right\|^2\ud x\\
&\quad\quad\quad+\sum_{\zeta=\pm}\int_{\om_{\zeta}(t)}\|\pa_t\Phi_{h,\zeta}^m(\cdot,t)\cdot\{(\na\A_{h,\zeta}^m)(\Phi_{h,\zeta}^m(\cdot,t))\}\|^2\ud x\\
&\quad\quad\quad+\sum_{\zeta=\pm}\int_{\om_{\zeta}(t)}\left\|\pa_t\Phi_{h,\zeta}^m(\cdot,t)\cdot\left\{\left\{\na\left[\A_{h,\zeta}^{m+1}((\Phi_{h,\zeta}^m)^{-1}(\cdot,t_{m+1}))\right]\right\}(\Phi_{h,\zeta}^m(\cdot,t))\right\}\right\|^2\ud x\\
&\quad\leq C\left(\sum_{\zeta=\pm}\int_{\om_{\zeta}(t_m)}\left\|\frac{\A_{h,\zeta}^{m+1}((\Phi_{h,\zeta}^m)^{-1}(\cdot,t_{m+1}))-\A_{h,\zeta}^m(\cdot)}{h}\right\|^2\ud x\right)\\
&\quad\quad\quad+C\left(\sum_{\zeta=\pm}\int_{\om_{\zeta}(t_m)}\|\na\A_{h,\zeta}^m\|^2\ud x\right)+C\left(\sum_{\zeta=\pm}\int_{\om_{\zeta}(t_{m+1})}\|\na\A_{h,\zeta}^{m+1}\|^2\ud x\right).
\end{aligned}\label{WeakUse}
\ee
Together with \eqref{Apriori2}, this yields
\begin{align*}
\sum_{\zeta=\pm}\int_{\om_{\zeta}(t)}\|\pa_t\wt{\A}_{h,\zeta}(\cdot,t)\|^2\ud x
&\leq C\left(\sum_{\zeta=\pm}\int_{\om_{\zeta}(t_{m+1})}\left\|\frac{\A_{h,\zeta}^{m+1}(\cdot)-\A_{h,\zeta}^m(\Phi_{h,\zeta}^m(\cdot,t_{m+1}))}{h}\right\|^2\ud x\right)\\
&\quad\quad+Ce^{\wt{C}T}\left(\sum_{\zeta=\pm}\int_{\om_{\zeta}(0)}\|\na\A_{0,\zeta}\|^2\ud x\right).
\end{align*}
Integrating over time and using \eqref{Apriori21}, we conclude that
\be
\begin{aligned}
&\sum_{\zeta=\pm}\int_{0}^{T}\int_{\om_{\zeta}(t)}\|\pa_t\wt{\A}_{h,\zeta}\|^2\ud x\ud t
=\sum_{m=0}^{N-1}\sum_{\zeta=\pm}\int_{t_m}^{t_{m+1}}\int_{\om_{\zeta}(t)}\|\pa_t\wt{\A}_{h,\zeta}\|^2\ud x\ud t\\
&\quad\leq\sum_{m=0}^{N-1}\sum_{\zeta=\pm}e^{\wt{C}T}\left(e^{-\wt{C}t_m}\int_{\om_{\zeta}(t_m)}\|\na\A_{h,\zeta}^m\|^2\ud x-e^{-\wt{C}t_{m+1}}\int_{\om_{\zeta}(t_{m+1})}\|\na\A_{h,\zeta}^{m+1}\|^2\ud x\right)\\
&\quad\quad\quad+\sum_{m=0}^{N-1}Che^{\wt{C}T}\left(\sum_{\zeta=\pm}\int_{\om_{\zeta}(0)}\|\na\A_{0,\zeta}\|^2\ud x\right)\\
&\quad\leq C(1+T)e^{\wt{C}T}\left(\sum_{\zeta=\pm}\int_{\om_{\zeta}(0)}\|\na\A_{0,\zeta}\|^2\ud x\right).
\end{aligned}\label{Boundedness1}
\ee

Again, by the chain rule, for any $ t\in(t_m,t_{m+1}] $, we have
\begin{align*}
\na\ol{\A}_h(x,t)
&=(\na\A_h^{m+1})(\ol{\Phi}_h^m(x,t))\left[D(\Phi_h^m)^{-1}\right](\Phi_h^m(x,t),t_{m+1})(D\Phi_h^m)(x,t).
\end{align*}
Using \eqref{EstimateDPhi}, \eqref{Apriori2}, and the change-of-variables formula, we obtain
\begin{align*}
\sum_{\zeta=\pm}\int_{\om_{\zeta}(t)}\|\na\ol{\A}_{h,\zeta}(\cdot,t)\|^2\ud x
&\leq C\left(\sum_{\zeta=\pm}\int_{\om_{\zeta}(t_{m+1})}\|\na\A_{h,\zeta}^{m+1}\|^2\ud x\right)\\
&\leq Ce^{\wt{C}T}\left(\sum_{\zeta=\pm}\int_{\om_{\zeta}(0)}\|\na\A_{0,\zeta}\|^2\ud x\right),
\end{align*}
and
\begin{align*}
\sum_{\zeta=\pm}\int_{\om_{\zeta}(t)}\|\na\wt{\A}_{h,\zeta}(\cdot,t)\|^2\ud x
&\leq C\sum_{\zeta=\pm}\left(\int_{\om_{\zeta}(t_m)}\|\na\A_{h,\zeta}^m\|^2\ud x+\int_{\om_{\zeta}(t_{m+1})}\|\na\A_{h,\zeta}^{m+1}\|^2\ud x\right)\\
&\leq Ce^{\wt{C}T}\left(\sum_{\zeta=\pm}\int_{\om_{\zeta}(0)}\|\na\A_{0,\zeta}\|^2\ud x\right)
\end{align*}
for any $ t\in(t_m,t_{m+1}] $. Combining these estimates with \eqref{Boundedness1}, we find that
\begin{gather*}
\{\wt{\A}_h\}\text{ is bounded in }L^{\ift}(0,T;H^1(\om_{\pm}(\cdot),\mathbb{M}_n)),\\
\{\pa_t\wt{\A}_h\}\text{ is bounded in }L^2(0,T;L^2(\om_{\pm}(\cdot),\mathbb{M}_n)),\\
\{\ol{\A}_h\}\text{ is bounded in }L^{\ift}(0,T;H^1(\om_{\pm}(\cdot),\mathbb{M}_n)).
\end{gather*}
For Banach spaces $ X_1 $ and $ X_2 $, we define
\[
X_1(0,T;X_2(\om_{\pm}(\cdot),\mathbb{M}_n))
:=\left\{\mathbf{Y}=\mathbf{Y}_{\pm}:\om_{T,\pm}\to\mathbb{M}_n:\left\|\|\mathbf{Y}(\cdot,t)\|_{X_2(\om_{\pm}(t))}\right\|_{X_1(0,T)}<\ift\right\}.
\]
Therefore, there exist $ \A=\A_{\pm} $ and $ \mathbf{B}=\mathbf{B}_{\pm} $ on $ \om_{T,\pm} $ such that
\begin{gather*}
\A\in L^2(0,T;L^2(\om_{\pm}(\cdot),\mathbb{M}_n)),\quad
\na\A\in L^{\ift}(0,T;L^2(\om_{\pm}(\cdot),\mathbb{M}_n)),\\
\pa_t\A\in L^2(0,T;L^2(\om_{\pm}(\cdot),\mathbb{M}_n)),\quad
\mathbf{B}\in L^{\ift}(0,T;H^1(\om_{\pm}(\cdot),\mathbb{M}_n)),
\end{gather*}
and up to a subsequence as $ h\to 0^+ $,
\be
\begin{gathered}
\wt{\A}_h\to\A\text{ strongly in }L^2(0,T;L^2(\om_{\pm}(\cdot),\mathbb{M}_n)),\\
\na\wt{\A}_h\wc^*\na\A\text{ weak* in }L^{\ift}(0,T;L^2(\om_{\pm}(\cdot),\mathbb{M}_n)),\\
\pa_t\wt{\A}_h\wc\pa_t\A\text{ weakly in }L^2(0,T;L^2(\om_{\pm}(\cdot),\mathbb{M}_n)),\\
\ol{\A}_h\wc^*\mathbf{B}\text{ weak* in }L^{\ift}(0,T;H^1(\om_{\pm}(\cdot),\mathbb{M}_n)).
\end{gathered}\label{ConvergenceNonFixed}
\ee
We next identify the two limits.

\begin{claim}
We have $ \A=\mathbf{B} $ a.e. in $ \om_{T,\pm} $.
\end{claim}

\begin{proof}
For any $ x\in\om_{\pm}(t) $ and $ t\in(t_m,t_{m+1}] $, direct computation gives
\begin{align*}
&\wt{\A}_h(x,t)-\ol{\A}_h(x,t)\\
&\quad=\left(1-\frac{t-t_m}{h}\right)\A_h^m(\Phi_h^m(x,t))
+\frac{t-t_m}{h}\A_h^{m+1}(\ol{\Phi}_h^m(x,t))
-\A_h^{m+1}(\ol{\Phi}_h^m(x,t))\\
&\quad=\left(\frac{t-t_{m+1}}{h}\right)\left(\A_h^{m+1}(\ol{\Phi}_h^m(x,t))-\A_h^m(\Phi_h^m(x,t))\right).
\end{align*}
Consequently,
\begin{align*}
&\sum_{\zeta=\pm}\int_{0}^{T}\int_{\om_{\zeta}(t)}\|\wt{\A}_{h,\zeta}-\ol{\A}_{h,\zeta}\|^2\ud x\ud t\\
&\quad=\sum_{m=0}^{N-1}\sum_{\zeta=\pm}\int_{t_m}^{t_{m+1}}\int_{\om_{\zeta}(t)}\|\wt{\A}_{h,\zeta}-\ol{\A}_{h,\zeta}\|^2\ud x\ud t\\
&\quad\leq\sum_{m=0}^{N-1}\sum_{\zeta=\pm}\int_{t_m}^{t_{m+1}}\int_{\om_{\zeta}(t)}
\|\A_{h,\zeta}^{m+1}(\ol{\Phi}_{h,\zeta}^m(\cdot,t))-\A_{h,\zeta}^m(\Phi_{h,\zeta}^m(\cdot,t))\|^2\ud x\ud t\\
&\quad\leq\sum_{m=0}^{N-1}\sum_{\zeta=\pm}\int_{t_m}^{t_{m+1}}
\left(Ch^2\int_{\om_{\zeta}(t_{m+1})}\left\|\frac{\A_{h,\zeta}^{m+1}(\cdot)-\A_{h,\zeta}^m(\Phi_{h,\zeta}^m(\cdot,t_{m+1}))}{h}\right\|^2\ud x\right)\ud t\\
&\quad\leq ChTe^{\wt{C}T}\left(\sum_{\zeta=\pm}\int_{\om_{\zeta}(0)}\|\na\A_{0,\zeta}\|^2\ud x\right).
\end{align*}
Here we used the change-of-variables formula in the fourth line and \eqref{Apriori21} in the last line. Letting $ h\to 0^+ $, we obtain
\[
\wt{\A}_h-\ol{\A}_h\to 0
\quad\text{strongly in }L^2(0,T;L^2(\om_{\pm}(\cdot),\mathbb{M}_n)).
\]
Since $ \wt{\A}_h\to\A $ strongly in the same space, it follows that
\be
\ol{\A}_h\to\A\text{ strongly in }L^2(0,T;L^2(\om_{\pm}(\cdot),\mathbb{M}_n)).\label{OLAL2}
\ee
Combining \eqref{OLAL2} with the fourth convergence in \eqref{ConvergenceNonFixed}, we conclude that $ \A=\mathbf{B} $ a.e. in $ \om_{T,\pm} $.
\end{proof}

Moreover, by construction, $ \ol{\A}_{h,\pm}\in\OO_{\pm}(n) $. Passing to a further subsequence if necessary, we may assume that $ \ol{\A}_h\to\A $ a.e. in $ \om_{T,\pm} $. Hence
\[
\A_{\pm}\in\OO_{\pm}(n)\quad\text{a.e. in } \om_{T,\pm}.
\]
In particular,
\[
\A_{\pm}=\mathbf{B}_{\pm}:\om_{T,\pm}\to\OO_{\pm}(n)
\quad\text{a.e.}.
\]

\subsubsection{Proof that \texorpdfstring{$ (\A_+,\A_-) $}{} forms a minimal pair}

We now show that the limit pair inherits the minimal pair property on the moving interface. The argument is close to the fixed-interface case, but we modify the interpolation near each discrete time level.

Fix $ 0<\lda<1 $ and define $ \A_{\lda,h} $ by
\[
\A_{\lda,h}(x,t)=
\left\{
\begin{aligned}
&\frac{t_{m+1}-\lda h-t}{(1-\lda)h}\A_h^m(x)
+\frac{t-t_m}{(1-\lda)h}\A_h^{m+1}((\Phi_h^m)^{-1}(x,t_{m+1}))
&&\text{if } t\in(t_m,t_{m+1}-\lda h],\\
&\A_h^{m+1}((\Phi_h^m)^{-1}(x,t_{m+1}))
&&\text{if } t\in(t_{m+1}-\lda h,t_{m+1}].
\end{aligned}
\right.
\]
Also, let
\[
\wt{\A}_{\lda,h}(x,t):=\A_{\lda,h}(\Phi_h^m(x,t),t).
\]
By the definition of $ \wt{\A}_{\lda,h} $ and the chain rule, we have
\begin{align*}
\pa_t\wt{\A}_{\lda,h}(x,t)
&=\frac{1}{(1-\lda)h}\{\A_h^{m+1}(\ol{\Phi}_h^m(x,t))-\A_h^m(\Phi_h^m(x,t))\}\\
&\quad+\left(\frac{t_{m+1}-\lda h-t}{(1-\lda)h}\right)\pa_t\Phi_h^m(x,t)\cdot\{(\na\A_h^m)(\Phi_h^m(x,t))\}\\
&\quad+\left(\frac{t-t_m}{(1-\lda)h}\right)\pa_t\Phi_h^m(x,t)\cdot
\left\{\left\{\na\left[\A_h^{m+1}((\Phi_h^m)^{-1}(\cdot,t_{m+1}))\right]\right\}(\Phi_h^m(x,t))\right\}
\end{align*}
for $ t\in(t_m,t_{m+1}-\lda h] $ and
\[
\pa_t\wt{\A}_{\lda,h}(x,t)
=\pa_t\Phi_h^m(x,t)\cdot
\left\{\left\{\na\left[\A_h^{m+1}((\Phi_h^m)^{-1}(\cdot,t_{m+1}))\right]\right\}(\Phi_h^m(x,t))\right\}
\]
for $ t\in(t_{m+1}-\lda h,t_{m+1}] $. Applying the change-of-variables formula together with \eqref{EstimateDPhi}, \eqref{EstimatepatPhi}, \eqref{Apriori2}, and \eqref{Apriori21}, we obtain
\begin{align*}
&\sum_{\zeta=\pm}\int_0^T\int_{\om_{\zeta}(t)}\|\pa_t\wt{\A}_{\lda,h,\zeta}(\cdot,t)\|^2\ud x\ud t\\
&\leq\frac{C}{(1-\lda)^2}\sum_{\zeta=\pm}\sum_{m=0}^{N-1}\int_{t_m}^{t_{m+1}}
\int_{\om_{\zeta}(t)}
\left\|\frac{\A_{h,\zeta}^{m+1}(\ol{\Phi}_{h,\zeta}^m(\cdot,t))-\A_{h,\zeta}^m(\Phi_{h,\zeta}^m(\cdot,t))}{h}\right\|^2\ud x\ud t\\
&\quad+C\sum_{m=0}^{N-1}\sum_{\zeta=\pm}\int_{t_m}^{t_{m+1}}
\int_{\om_{\zeta}(t)}
\|\pa_t\Phi_{h,\zeta}^m(\cdot,t)\cdot\{(\na\A_{h,\zeta}^m)(\Phi_{h,\zeta}^m(\cdot,t))\}\|^2\ud x\ud t\\
&\quad+C\sum_{m=0}^{N-1}\sum_{\zeta=\pm}\int_{t_m}^{t_{m+1}}
\int_{\om_{\zeta}(t)}
\left\|\pa_t\Phi_{h,\zeta}^m(\cdot,t)\cdot
\left\{\left\{\na\left[\A_{h,\zeta}^{m+1}((\Phi_{h,\zeta}^m)^{-1}(\cdot,t_{m+1}))\right]\right\}(\Phi_{h,\zeta}^m(\cdot,t))\right\}\right\|^2\ud x\ud t.
\end{align*}
Hence
\be
\sum_{\zeta=\pm}\int_0^T\int_{\om_{\zeta}(t)}\|\pa_t\wt{\A}_{\lda,h,\zeta}(\cdot,t)\|^2\ud x\ud t
\leq C(1+T)e^{\wt{C}T}\left(\sum_{\zeta=\pm}\int_{\om_{\zeta}(0)}\|\na\A_{0,\zeta}\|^2\ud x\right),\label{patA}
\ee
where $ C $ depends only on $ \om $, $ \Ga(0) $, $ T_0 $, and $ \lda $. Similarly,
\be
\sup_{0\leq t\leq T}\sum_{\zeta=\pm}\int_{\om_{\zeta}(t)}\|\na\wt{\A}_{\lda,h,\zeta}(\cdot,t)\|^2\ud x
\leq Ce^{\wt{C}T}\left(\sum_{\zeta=\pm}\int_{\om_{\zeta}(0)}\|\na\A_{0,\zeta}\|^2\ud x\right).\label{nablaA}
\ee
We also compare $ \wt{\A}_{\lda,h} $ with $ \ol{\A}_h $. If $ t\in(t_m,t_{m+1}-\lda h] $, then
\[
\wt{\A}_{\lda,h}(x,t)-\ol{\A}_h(x,t)
=\left(\frac{t-t_{m+1}+\lda h}{(1-\lda)h}\right)
\{\A_h^{m+1}(\ol{\Phi}_h^m(x,t))-\A_h^m(\Phi_h^m(x,t))\},
\]
whereas
\[
\wt{\A}_{\lda,h}(x,t)-\ol{\A}_h(x,t)=0
\quad\text{for } t\in(t_{m+1}-\lda h,t_{m+1}].
\]
Therefore, by \eqref{Apriori21}, as $ h\to 0^+ $,
\be
\sum_{\zeta=\pm}\int_0^T\int_{\om_{\zeta}(t)}\|\wt{\A}_{\lda,h,\zeta}-\ol{\A}_{h,\zeta}\|^2\ud x\ud t
\leq ChTe^{\wt{C}T}\left(\sum_{\zeta=\pm}\int_{\om_{\zeta}(0)}\|\na\A_{0,\zeta}\|^2\ud x\right)\to 0.\label{Aldas}
\ee
Combining \eqref{patA} and \eqref{nablaA}, we deduce that
\be
\begin{gathered}
\{\wt{\A}_{\lda,h}\}\text{ is bounded in }L^{\ift}(0,T;H^1(\om_{\pm}(\cdot),\mathbb{M}_n)),\\
\{\pa_t\wt{\A}_{\lda,h}\}\text{ is bounded in }L^2(0,T;L^2(\om_{\pm}(\cdot),\mathbb{M}_n)).
\end{gathered}\label{ldaBound}
\ee
Using \eqref{ConvergenceNonFixed}, \eqref{Aldas}, and \eqref{ldaBound}, we obtain, up to a subsequence,
\begin{gather*}
\wt{\A}_{\lda,h}\to\A\text{ strongly in }L^2(0,T;L^2(\om_{\pm}(\cdot),\mathbb{M}_n)),\\
\na\wt{\A}_{\lda,h}\wc^*\na\A\text{ weak* in }L^{\ift}(0,T;L^2(\om_{\pm}(\cdot),\mathbb{M}_n)),\\
\pa_t\wt{\A}_{\lda,h}\wc\pa_t\A\text{ weakly in }L^2(0,T;L^2(\om_{\pm}(\cdot),\mathbb{M}_n)).
\end{gather*}
By the trace theorem, it follows, in particular, that
\[
\wt{\A}_{\lda,h}\to\A
\quad\text{strongly in }L^2(0,T;L^2(\Ga(t),\mathbb{M}_n)).
\]

For fixed $ \lda $ and $ h $, define
\begin{align*}
M_{\lda,h}(\Ga_T)&:=\{(x,t)\in\Ga_T:(\wt{\A}_{\lda,h,+},\wt{\A}_{\lda,h,-})\text{ satisfies the minimal pair condition}\},\\
M_{\A}(\Ga_T)&:=\{(x,t)\in\Ga_T:(\A_+,\A_-)\text{ satisfies the minimal pair condition}\}.
\end{align*}
By the construction of $ \wt{\A}_{\lda,h} $ and $ \ol{\A}_h $, and since $ (\ol{\A}_{h,+},\ol{\A}_{h,-}) $ is a minimal pair, we have
\[
\HH^{d}\left(\Ga_T\backslash M_{\lda,h}(\Ga_T)\right)\leq C(1-\lda),
\]
where $ C $ depends only on $ \om $, $ T $, and $ \Ga(0) $. Recalling that $ h=T/N $, we therefore obtain
\[
\HH^{d}\left(\Ga_T\backslash\left(\limsup_{N\to+\ift}M_{\lda,h}(\Ga_T)\right)\right)\leq C(1-\lda).
\]
Moreover, exactly as in the fixed-interface case, strong trace convergence implies that
\[
\limsup_{N\to+\ift}M_{\lda,h}(\Ga_T)\subset M_{\A}(\Ga_T)
\]
up to an $ \HH^d $-null subset of $ \Ga_T $. Consequently, there exists a constant $ c_1>0 $, depending only on $ \om $, $ \Ga(0) $, and $ T_0 $, such that
\[
\HH^{d}\left(\Ga_T\backslash M_{\A}(\Ga_T)\right)\leq c_1(1-\lda).
\]
Letting $ \lda\to 1^- $, we conclude that $ \A $ satisfies the minimal pair condition on $ \Ga_T $.

\subsubsection{Showing that the limit \texorpdfstring{$ \A_{\pm} $}{} is a weak solution}

To prove that $ \A $ is a weak solution of \eqref{HeatSystems}, it is enough to verify \eqref{WeakNeumann} and \eqref{WeakNeumann1} for every
$ \Psi=\Psi_{\pm}\in C_0^{\ift}(\om_{T,\pm}\cup\Ga_T,\mathbb{A}_n) $
satisfying the boundary condition \eqref{BoundaryTest}. As in the proof of Theorem \ref{ExistenceFix}, the argument for \eqref{WeakNeumann1} is the same as that for \eqref{WeakNeumann}, with \eqref{WeakEulerLagrange1} in place of \eqref{WeakEulerLagrange}. We therefore prove only \eqref{WeakNeumann}.

For any $ t\in(t_m,t_{m+1}] $, using identities in \eqref{Frobidentity}, we have
\be
\begin{aligned}
&\sum_{\zeta=\pm}\int_{\om_{\zeta}(t)}
\left((\ol{\A}_{h,\zeta}^{\T}\pa_t\wt{\A}_{h,\zeta}):\Psi_{\zeta}
+(\ol{\A}_{h,\zeta}^{\T}\na\ol{\A}_{h,\zeta}):\na\Psi_{\zeta}\right)\ud x\\
&\quad=\sum_{\zeta=\pm}\int_{\om_{\zeta}(t)}
\frac{\A_{h,\zeta}^{m+1}(\ol{\Phi}_{h,\zeta}^m(\cdot,t))
-\A_{h,\zeta}^m(\Phi_{h,\zeta}^m(\cdot,t))}{h}
:\left\{\A_{h,\zeta}^{m+1}(\ol{\Phi}_{h,\zeta}^m(\cdot,t))\Psi_{\zeta}(\cdot,t)\right\}\ud x\\
&\quad\quad\quad+\sum_{\zeta=\pm}\int_{\om_{\zeta}(t)}
\mathbf{C}_h^m(\cdot,t):
\left\{\A_{h,\zeta}^{m+1}(\ol{\Phi}_{h,\zeta}^m(\cdot,t))\Psi_{\zeta}(\cdot,t)\right\}\ud x\\
&\quad\quad\quad+\sum_{\zeta=\pm}\int_{\om_{\zeta}(t)}
\mathbf{D}_h^m(\cdot,t):
\left\{\A_{h,\zeta}^{m+1}(\ol{\Phi}_{h,\zeta}^m(\cdot,t))\Psi_{\zeta}(\cdot,t)\right\}\ud x\\
&\quad\quad\quad+\sum_{\zeta=\pm}\int_{\om_{\zeta}(t)}
\na\left\{\A_{h,\zeta}^{m+1}(\ol{\Phi}_{h,\zeta}^m(\cdot,t))\right\}
:\left\{\A_{h,\zeta}^{m+1}(\ol{\Phi}_{h,\zeta}^m(\cdot,t))\na\Psi_{\zeta}(\cdot,t)\right\}\ud x\\
&\quad=:I_h^m(1,t)+I_h^m(2,t)+I_h^m(3,t)+I_h^m(4,t),
\end{aligned}\label{Long}
\ee
where $ \mathbf{C}_h^m $ and $ \mathbf{D}_h^m $ are defined in \eqref{CDFunction}. We first reduce the proof to the following statement:
\be
\lim_{h\to 0^+}\sum_{m=0}^{N-1}\int_{t_m}^{t_{m+1}}
\left(I_h^m(1,t)+I_h^m(2,t)+I_h^m(3,t)+I_h^m(4,t)\right)\ud t=0.\label{Claim}
\ee
If \eqref{Claim} holds, then \eqref{Long} gives
\be
\begin{aligned}
&\lim_{h\to 0^+}\sum_{\zeta=\pm}\int_0^T\int_{\om_{\zeta}(t)}
\left((\ol{\A}_{h,\zeta}^{\T}\pa_t\wt{\A}_{h,\zeta}):\Psi_{\zeta}
+(\ol{\A}_{h,\zeta}^{\T}\na\ol{\A}_{h,\zeta}):\na\Psi_{\zeta}\right)\ud x\ud t\\
&\quad=\lim_{h\to 0^+}\sum_{m=0}^{N-1}\int_{t_m}^{t_{m+1}}
\left(I_h^m(1,t)+I_h^m(2,t)+I_h^m(3,t)+I_h^m(4,t)\right)\ud t=0.
\end{aligned}\label{LimitUse}
\ee
Using \eqref{ConvergenceNonFixed} and the same argument as in the proof of \eqref{Convergence2}, we obtain
\[
\begin{aligned}
&\sum_{\zeta=\pm}\int_0^T\int_{\om_{\zeta}(t)}
\left((\ol{\A}_{h,\zeta}^{\T}\pa_t\wt{\A}_{h,\zeta}):\Psi_{\zeta}
+(\ol{\A}_{h,\zeta}^{\T}\na\ol{\A}_{h,\zeta}):\na\Psi_{\zeta}\right)\ud x\ud t\\
&\quad\to
\sum_{\zeta=\pm}\int_0^T\int_{\om_{\zeta}(t)}
\left((\A_{\zeta}^{\T}\pa_t\A_{\zeta}):\Psi_{\zeta}
+(\A_{\zeta}^{\T}\na\A_{\zeta}):\na\Psi_{\zeta}\right)\ud x\ud t
\end{aligned}
\]
as $ h\to 0^+ $. Combining this with \eqref{LimitUse}, we conclude that
\[
\sum_{\zeta=\pm}\int_0^T\int_{\om_{\zeta}(t)}
\left((\A_{\zeta}^{\T}\pa_t\A_{\zeta}):\Psi_{\zeta}
+(\A_{\zeta}^{\T}\na\A_{\zeta}):\na\Psi_{\zeta}\right)\ud x\ud t=0.
\]
This is exactly \eqref{WeakNeumann}. Hence $ \A $ is a weak solution.

It remains to prove \eqref{Claim}. We divide the argument into
\be
\lim_{h\to 0^+}\sum_{m=0}^{N-1}\int_{t_m}^{t_{m+1}}I_h^m(2,t)\ud t=0,\label{Claim2}
\ee
and
\be
\lim_{h\to 0^+}\sum_{m=0}^{N-1}\int_{t_m}^{t_{m+1}}
\left(I_h^m(1,t)+I_h^m(3,t)+I_h^m(4,t)\right)\ud t=0.\label{Claim4}
\ee

\begin{proof}[Proof of \eqref{Claim2}]
We have
\[
\sum_{m=0}^{N-1}\int_{t_m}^{t_{m+1}}I_h^m(2,t)\ud t
=
\sum_{\zeta=\pm}\int_0^T\int_{\om_{\zeta}(t)}
\mathbf{C}_{h,\zeta}(\cdot,t):\left(\ol{\A}_{h,\zeta}(\cdot,t)\Psi_{\zeta}(\cdot,t)\right)\ud x\ud t.
\]
Using \eqref{EstimateDPhi}, \eqref{EstimatepatPhi}, \eqref{Apriori2}, and the change-of-variables formula, we see that $ \mathbf{C}_h $ is bounded in
$ L^2(0,T;L^2(\om_{\pm}(\cdot),\mathbb{M}_n)) $. Indeed,
\begin{align*}
&\sum_{\zeta=\pm}\int_0^T\int_{\om_{\zeta}(t)}\|\mathbf{C}_{h,\zeta}(\cdot,t)\|^2\ud x\ud t\\
&=\sum_{m=0}^{N-1}\sum_{\zeta=\pm}\int_{t_m}^{t_{m+1}}
\int_{\om_{\zeta}(t)}\|\mathbf{C}_{h,\zeta}^m(\cdot,t)\|^2\ud x\ud t\\
&\leq
C\sum_{m=0}^{N-1}\sum_{\zeta=\pm}\int_{t_m}^{t_{m+1}}
\int_{\om_{\zeta}(t)}
\|\pa_t\Phi_{h,\zeta}^m(\cdot,t)\cdot\{(\na\A_{h,\zeta}^m)(\Phi_{h,\zeta}^m(\cdot,t))\}\|^2\ud x\ud t\\
&\quad+
C\sum_{m=0}^{N-1}\sum_{\zeta=\pm}\int_{t_m}^{t_{m+1}}
\int_{\om_{\zeta}(t)}
\left\|\pa_t\Phi_{h,\zeta}^m(\cdot,t)\cdot
\left\{\left\{\na\left[\A_{h,\zeta}^{m+1}\((\Phi_{h,\zeta}^m)^{-1}(\cdot,t_{m+1})\)\right]\right\}
(\Phi_{h,\zeta}^m(\cdot,t))\right\}\right\|^2\ud x\ud t\\
&\leq
C\sum_{m=0}^{N-1}h
\sum_{\zeta=\pm}\left(
\int_{\om_{\zeta}(t_m)}\|\na\A_{h,\zeta}^m\|^2\ud x
+
\int_{\om_{\zeta}(t_{m+1})}\|\na\A_{h,\zeta}^{m+1}\|^2\ud x
\right)\\
&\leq
C\sum_{\zeta=\pm}\int_{\om_{\zeta}(0)}\|\na\A_{0,\zeta}\|^2\ud x,
\end{align*}
where $ C $ depends only on $ \om $, $ \Ga(0) $, and $ T_0 $. Hence, up to a subsequence, there exists
$ \mathbf{C}\in L^2(0,T;L^2(\om_{\pm}(\cdot),\mathbb{M}_n)) $
such that
\[
\mathbf{C}_h\wc \mathbf{C}
\quad\text{weakly in }L^2(0,T;L^2(\om_{\pm}(\cdot),\mathbb{M}_n)).
\]

It is enough to prove that $ \mathbf{C}=\mathbf{O} $. Indeed, since
$ \ol{\A}_h\to\A $ strongly in $ L^2(0,T;L^2(\om_{\pm}(\cdot),\mathbb{M}_n)) $, we have
\begin{align*}
&\sum_{\zeta=\pm}\(
\int_0^T\int_{\om_{\zeta}(t)}
\mathbf{C}_{h,\zeta}:(\ol{\A}_{h,\zeta}\Psi_{\zeta})\ud x\ud t
-
\int_0^T\int_{\om_{\zeta}(t)}
\mathbf{C}_{\zeta}:(\A_{\zeta}\Psi_{\zeta})\ud x\ud t
\)\\
&\quad=
\sum_{\zeta=\pm}\int_0^T\int_{\om_{\zeta}(t)}
\mathbf{C}_{h,\zeta}:(\ol{\A}_{h,\zeta}\Psi_{\zeta}-\A_{\zeta}\Psi_{\zeta})\ud x\ud t\\
&\quad\quad+
\sum_{\zeta=\pm}\int_0^T\int_{\om_{\zeta}(t)}
(\mathbf{C}_{h,\zeta}-\mathbf{C}_{\zeta}):(\A_{\zeta}\Psi_{\zeta})\ud x\ud t\to 0
\end{align*}
as $ h\to 0^+ $. Therefore, if $ \mathbf{C}=\mathbf{O} $, then \eqref{Claim2} follows.

We now prove $ \mathbf{C}=\mathbf{O} $. Let
$ \Xi=\Xi_{\pm}\in C_0^{\ift}(\om_{T,\pm},\mathbb{M}_n) $.
For $ t\in(t_m,t_{m+1}] $, define
\[
\widetilde{\Xi}(x,t)
=
\frac{t_m-t}{h}\,
\Xi((\Phi_h^m)^{-1}(x,t),t)\,
\pa_t\Phi_h^m((\Phi_h^m)^{-1}(x,t),t)\,
\op{J}((\Phi_h^m)^{-1}(x,t)),
\]
that is,
\[
\widetilde{\Xi}(x,t)
=
\left\{
\frac{t_m-t}{h}\,
\Xi((\Phi_h^m)^{-1}(x,t),t)\,
\pa_t\Phi_h^{m,\al}((\Phi_h^m)^{-1}(x,t),t)\,
\op{J}((\Phi_h^m)^{-1}(x,t))
\right\}_{\al=1}^{d}.
\]
Then integration by parts yields
\be
\begin{aligned}
&\left|
\sum_{\zeta=\pm}\int_{\om_{\zeta}(t)}
\mathbf{C}_{h,\zeta}^m(\cdot,t):\Xi_{\zeta}(\cdot,t)\ud x
\right|\\
&\quad\leq\left|\sum_{\zeta=\pm}\int_{\om_{\zeta}(t_m)}
\left\{\A_{h,\zeta}^m(\cdot)-\A_{h,\zeta}^{m+1}\((\Phi_{h,\zeta}^m)^{-1}(\cdot,t_{m+1})\)
\right\}
:\op{div}\{\widetilde{\Xi}\}_{\zeta}(\cdot,t)\ud x
\right|.
\end{aligned}\label{C0C}
\ee
Using \eqref{EstimateDPhi}, \eqref{EstimatepatPhi}, \eqref{EstimateD2Phi}, and the compact support of $ \Xi $, we obtain
\be
\sum_{\zeta=\pm}\int_{\om_{\zeta}(t)}
\|\op{div}\{\widetilde{\Xi}\}_{\zeta}(\cdot,t)\|^2\ud x
\leq \ol{C}\label{EstimatePsi}
\ee
for any $ t\in(t_m,t_{m+1}] $, where $ \ol{C} $ depends only on $ \Xi $, $ \Ga(0) $, $ \om $, and $ T_0 $. Hence, by \eqref{C0C}, \eqref{EstimatePsi}, and Cauchy's inequality,
\begin{align*}
&\left|\sum_{\zeta=\pm}\int_{\om_{\zeta}(t)}
\mathbf{C}_{h,\zeta}^m(\cdot,t):\Xi_{\zeta}(\cdot,t)\ud x
\right|\\
&\quad\leq C\left(\sum_{\zeta=\pm}\int_{\om_{\zeta}(t_m)}
\left\|\A_{h,\zeta}^m(\cdot)-\A_{h,\zeta}^{m+1}\((\Phi_{h,\zeta}^m)^{-1}(\cdot,t_{m+1})\)\right\|^2\ud x\right)^{\f{1}{2}}.
\end{align*}
Therefore,
\begin{align*}
&\left|\sum_{\zeta=\pm}\int_0^T\int_{\om_{\zeta}(t)}
\mathbf{C}_{h,\zeta}(\cdot,t):\Xi_{\zeta}(\cdot,t)\ud x\ud t
\right|\\
&\quad\leq\sum_{m=0}^{N-1}\int_{t_m}^{t_{m+1}}
\left|\sum_{\zeta=\pm}\int_{\om_{\zeta}(t)}
\mathbf{C}_{h,\zeta}^m(\cdot,t):\Xi_{\zeta}(\cdot,t)\ud x
\right|\ud t\\
&\quad\leq C\sum_{m=0}^{N-1}\int_{t_m}^{t_{m+1}}
\left(\sum_{\zeta=\pm}\int_{\om_{\zeta}(t_m)}
\left\|\A_{h,\zeta}^m(\cdot)
-\A_{h,\zeta}^{m+1}\((\Phi_{h,\zeta}^m)^{-1}(\cdot,t_{m+1})\)
\right\|^2\ud x
\right)^{\f{1}{2}}\ud t\\
&\quad\leq CTh^{\f{1}{2}}
\left(\sum_{\zeta=\pm}\int_{\om_{\zeta}(0)}\|\na\A_{0,\zeta}\|^2\ud x
\right)^{\f{1}{2}}\to 0
\end{align*}
as $ h\to 0^+ $, where we used \eqref{Apriori21} in the last step. Since
$ C_0^{\ift}(\om_{T,\pm},\mathbb{M}_n) $ is dense in $ L^2(0,T;L^2(\om_{\pm}(\cdot),\mathbb{M}_n)) $, we conclude that $ \mathbf{C}=\mathbf{O} $. This proves \eqref{Claim2}.
\end{proof}

\begin{proof}[Proof of \eqref{Claim4}]
We now consider the remaining three terms. By the change-of-variables formula,
\be
\begin{aligned}
I_h^m(1,t)
&=
\sum_{\zeta=\pm}\int_{\om_{\zeta}(t_{m+1})}
\frac{\A_{h,\zeta}^{m+1}(\cdot)-\A_{h,\zeta}^m(\Phi_{h,\zeta}^m(\cdot,t_{m+1}))}{h}\\
&\quad\quad:
\left\{
\A_{h,\zeta}^{m+1}(\cdot)
\Psi_{\zeta}\((\ol{\Phi}_{h,\zeta}^m)^{-1}(\cdot,t),t\)
\right\}
\op{J}\((\ol{\Phi}_{h,\zeta}^m)^{-1}(\cdot,t)\)\ud x,
\end{aligned}\label{I1}
\ee
\be
\begin{aligned}
I_h^m(3,t)
&=
\sum_{\zeta=\pm}\int_{\om_{\zeta}(t_{m+1})}
\left\{
(\pa_t\Phi_{h,\zeta}^m)\((\ol{\Phi}_{h,\zeta}^m)^{-1}(\cdot,t),t\)
\cdot
(\na\A_{h,\zeta}^m)\(\Phi_{h,\zeta}^m(\cdot,t_{m+1})\)
\right\}\\
&\quad\quad:
\left\{
\A_{h,\zeta}^{m+1}(\cdot)
\Psi_{\zeta}\((\ol{\Phi}_{h,\zeta}^m)^{-1}(\cdot,t),t\)
\right\}
\op{J}\((\ol{\Phi}_{h,\zeta}^m)^{-1}(\cdot,t)\)\ud x,
\end{aligned}\label{I3}
\ee
and
\be
\begin{aligned}
I_h^m(4,t)
&=
\sum_{\zeta=\pm}\int_{\om_{\zeta}(t_{m+1})}
\na\A_{h,\zeta}^{m+1}(\cdot)
D\ol{\Phi}_{h,\zeta}^m\((\ol{\Phi}_{h,\zeta}^m)^{-1}(\cdot,t),t\)\\
&\quad\quad:
\left\{
\A_{h,\zeta}^{m+1}(\cdot)
\na\Psi_{\zeta}\((\ol{\Phi}_{h,\zeta}^m)^{-1}(\cdot,t),t\)
\right\}
\op{J}\((\ol{\Phi}_{h,\zeta}^m)^{-1}(\cdot,t)\)\ud x.
\end{aligned}\label{I4}
\ee

To connect these terms with the discrete Euler--Lagrange equation, we use the following test function. For each $ \zeta=\pm $, set
\[
\W_{\zeta}(x):=\Psi_{\zeta}\((\ol{\Phi}_{h,\zeta}^m)^{-1}(x,t),t\).
\]
Applying \eqref{WeakEulerLagrange} with
$ \mathbf{V}=\pa_t\Phi_h^m(\cdot,t_{m+1}^-) $ and using identities in \eqref{Frobidentity}, we obtain
\be
\begin{aligned}
0
&=
\sum_{\zeta=\pm}\int_{\om_{\zeta}(t_{m+1})}
\na\A_{h,\zeta}^{m+1}:(\A_{h,\zeta}^{m+1}\na\W_{\zeta})\ud x\\
&\quad+
\sum_{\zeta=\pm}\int_{\om_{\zeta}(t_{m+1})}
\frac{\A_{h,\zeta}^{m+1}
-\A_{h,\zeta}^m(\Phi_{h,\zeta}^m(\cdot,t_{m+1}))}{h}
:(\A_{h,\zeta}^{m+1}\W_{\zeta})\ud x\\
&\quad+
\sum_{\zeta=\pm}\int_{\om_{\zeta}(t_{m+1})}
\left\{
\pa_t\Phi_{h,\zeta}^m(\cdot,t_{m+1}^-)\cdot
\na\left[\A_{h,\zeta}^m(\Phi_{h,\zeta}^m(\cdot,t_{m+1}))\right]
\right\}
:(\A_{h,\zeta}^{m+1}\W_{\zeta})\ud x.
\end{aligned}\label{ELWE}
\ee
Accordingly, define
\begin{align*}
\wt{I}_h^m(1,t)
&:=\sum_{\zeta=\pm}\int_{\om_{\zeta}(t_{m+1})}
\frac{\A_{h,\zeta}^{m+1}(\cdot)-\A_{h,\zeta}^m(\Phi_{h,\zeta}^m(\cdot,t_{m+1}))}{h}:
\left\{\A_{h,\zeta}^{m+1}(\cdot)
\Psi_{\zeta}\((\ol{\Phi}_{h,\zeta}^m)^{-1}(\cdot,t),t\)
\right\}\ud x,
\end{align*}
\begin{align*}
\wt{I}_h^m(3,t)&:=\sum_{\zeta=\pm}\int_{\om_{\zeta}(t_{m+1})}
\left\{(\pa_t\Phi_{h,\zeta}^m)(\cdot,t_{m+1}^-)\cdot
\left\{(\na\A_{h,\zeta}^m)\(\Phi_{h,\zeta}^m(\cdot,t_{m+1})\)
D\Phi_{h,\zeta}^m(\cdot,t_{m+1})
\right\}\right\}\\
&\quad\quad:\left\{\A_{h,\zeta}^{m+1}(\cdot)
\Psi_{\zeta}\((\ol{\Phi}_{h,\zeta}^m)^{-1}(\cdot,t),t\)
\right\}\ud x,
\end{align*}
and
\begin{align*}
\wt{I}_h^m(4,t)
&:=
\sum_{\zeta=\pm}\int_{\om_{\zeta}(t_{m+1})}
\na\A_{h,\zeta}^{m+1}(\cdot):
\left\{
\A_{h,\zeta}^{m+1}(\cdot)
\na\Psi_{\zeta}\((\ol{\Phi}_{h,\zeta}^m)^{-1}(\cdot,t),t\)
D(\ol{\Phi}_{h,\zeta}^m)^{-1}(\cdot,t)
\right\}\ud x.
\end{align*}
Then \eqref{ELWE} yields
\[
\wt{I}_h^m(1,t)+\wt{I}_h^m(3,t)+\wt{I}_h^m(4,t)=0.
\]
Therefore, to prove \eqref{Claim4}, it suffices to show that for each $ j=1,3,4 $,
\be
\lim_{h\to 0^+}\sum_{m=0}^{N-1}\int_{t_m}^{t_{m+1}}
\left(I_h^m(j,t)-\wt{I}_h^m(j,t)\right)\ud t=0.\label{134}
\ee

We begin with the case $ j=1 $. By \eqref{I1},
\begin{align*}
I_h^m(1,t)-\wt{I}_h^m(1,t)
&=
\sum_{\zeta=\pm}\int_{\om_{\zeta}(t_{m+1})}
\frac{\A_{h,\zeta}^{m+1}(\cdot)-\A_{h,\zeta}^m(\Phi_{h,\zeta}^m(\cdot,t_{m+1}))}{h}\\
&\quad\quad:
\left\{
\A_{h,\zeta}^{m+1}(\cdot)
\Psi_{\zeta}\((\ol{\Phi}_{h,\zeta}^m)^{-1}(\cdot,t),t\)
\right\}
\left\{\op{J}\((\ol{\Phi}_{h,\zeta}^m)^{-1}(\cdot,t)\)-1\right\}\ud x.
\end{align*}
Using \eqref{EstimateDPhi}, we have
\be
\left|\op{J}\((\ol{\Phi}_{h,\zeta}^m)^{-1}(\cdot,t)\)-1\right|\leq Ch,\label{Ch1}
\ee
where $ C $ depends only on $ \om $, $ T_0 $, and $ \Ga(0) $. Since
$ \A_{h,\pm}^{m+1}\in\OO_{\pm}(n) $
and
$ \Psi\in C_0^{\ift}(\om_{T,\pm}\cup\Ga_T,\mathbb{M}_n) $,
Cauchy's inequality and \eqref{Apriori21} imply that
\begin{align*}
|I_h^m(1,t)-\wt{I}_h^m(1,t)|
&\leq
C\left(
\sum_{\zeta=\pm}\int_{\om_{\zeta}(t_{m+1})}
\left\|\A_{h,\zeta}^{m+1}(\cdot)-A_{h,\zeta}^m(\Phi_{h,\zeta}^m(\cdot,t_{m+1}))
\right\|^2\ud x
\right)^{\f{1}{2}}\\
&\leq Ch^{\f{1}{2}}
\left(\sum_{\zeta=\pm}\int_{\om_{\zeta}(0)}\|\na\A_{0,\zeta}\|^2\ud x
\right)^{\f{1}{2}}.
\end{align*}
Hence,
\begin{align*}
\sum_{m=0}^{N-1}\int_{t_m}^{t_{m+1}}|I_h^m(1,t)-\wt{I}_h^m(1,t)|\ud t
&\leq
CTh^{\f{1}{2}}
\left(\sum_{\zeta=\pm}\int_{\om_{\zeta}(0)}\|\na\A_{0,\zeta}\|^2\ud x
\right)^{\f{1}{2}}\\
&\leq Ch^{\f{1}{2}}\to 0
\end{align*}
as $ h\to 0^+ $. Thus, \eqref{134} holds for $ j=1 $.

Next, we consider the case $ j=3 $. By \eqref{EstimateDPhi}, \eqref{EstimatepatPhi}, \eqref{EstimateD2Phi}, the mean value theorem, and \eqref{Ch1}, we have
\be
\begin{aligned}
\|(\pa_t\Phi_{h,\pm}^m)((\ol{\Phi}_{h,\pm}^m)^{-1}(\cdot,t),t)-(\pa_t\Phi_{h,\pm}^m)(\cdot,t_{m+1}^-)\|
&\leq Ch,\\
\|\I_d-D\Phi_{h,\pm}^m(\cdot,t_{m+1})\|
&\leq Ch,
\end{aligned}\label{Ch2}
\ee
on $ \om_{\pm}(t_{m+1}) $. Therefore,
\begin{align*}
|I_h^m(3,t)-\wt{I}_h^m(3,t)|
&\leq
C h
\left(
\sum_{\zeta=\pm}\int_{\om_{\zeta}(t_{m+1})}
\|(\na\A_{h,\zeta}^m)(\Phi_{h,\zeta}^m(\cdot,t_{m+1}))\|^2\ud x
\right)^{\f{1}{2}}\\
&\leq
C h
\left(
\sum_{\zeta=\pm}\int_{\om_{\zeta}(t_m)}\|\na\A_{h,\zeta}^m\|^2\ud x
\right)^{\f{1}{2}}\\
&\leq
C h
\left(
\sum_{\zeta=\pm}\int_{\om_{\zeta}(0)}\|\na\A_{0,\zeta}\|^2\ud x
\right)^{\f{1}{2}},
\end{align*}
where we used the change-of-variables formula in the second line and \eqref{Apriori2} in the third line. Consequently,
\begin{align*}
\sum_{m=0}^{N-1}\int_{t_m}^{t_{m+1}}|I_h^m(3,t)-\wt{I}_h^m(3,t)|\ud t
&\leq
CTh
\left(
\sum_{\zeta=\pm}\int_{\om_{\zeta}(0)}\|\na\A_{0,\zeta}\|^2\ud x
\right)^{\f{1}{2}}\\
&\leq Ch\to 0,
\end{align*}
and \eqref{134} holds for $ j=3 $.

Finally, we verify \eqref{134} for $ j=4 $. We rewrite the difference
$ I_h^m(4,t)-\wt{I}_h^m(4,t) $
in a form that isolates the geometric errors coming from the transformation $ \ol{\Phi}_{h,\zeta}^m $. For $ t\in(t_m,t_{m+1}] $, we have
\be
\begin{aligned}
&I_h^m(4,t)-\wt{I}_h^m(4,t)\\
&=\sum_{\zeta=\pm}\int_{\om_{\zeta}(t_{m+1})}
\left\{\na\A_{h,\zeta}^{m+1}(\cdot)\(D\ol{\Phi}_{h,\zeta}^m((\ol{\Phi}_{h,\zeta}^m)^{-1}(\cdot,t),t)-\I_d\)\right\}\\
&\quad\quad:\left\{\A_{h,\zeta}^{m+1}(\cdot)\na\Psi_{\zeta}((\ol{\Phi}_{h,\zeta}^m)^{-1}(\cdot,t),t)\right\}
\op{J}((\ol{\Phi}_{h,\zeta}^m)^{-1}(\cdot,t))\ud x\\
&\quad+\sum_{\zeta=\pm}\int_{\om_{\zeta}(t_{m+1})}
\na\A_{h,\zeta}^{m+1}(\cdot):
\left\{\A_{h,\zeta}^{m+1}(\cdot)\na\Psi_{\zeta}((\ol{\Phi}_{h,\zeta}^m)^{-1}(\cdot,t),t)\right\}
\left\{\op{J}((\ol{\Phi}_{h,\zeta}^m)^{-1}(\cdot,t))-1\right\}\ud x\\
&\quad+\sum_{\zeta=\pm}\int_{\om_{\zeta}(t_{m+1})}
\na\A_{h,\zeta}^{m+1}(\cdot):
\left\{\A_{h,\zeta}^{m+1}(\cdot)\na\Psi_{\zeta}((\ol{\Phi}_{h,\zeta}^m)^{-1}(\cdot,t),t)\(\I_d-D(\ol{\Phi}_{h,\zeta}^m)^{-1}(\cdot,t)\)\right\}\ud x\\
&=:I_h^m(4,1,t)+I_h^m(4,2,t)+I_h^m(4,3,t).
\end{aligned}\label{I4-I4}
\ee
Using \eqref{EstimateDPhi} and \eqref{EstimatepatPhi}, we obtain on $ \om_{\pm}(t_{m+1}) $, for any $ t\in(t_m,t_{m+1}] $,
\be
\begin{aligned}
\|\I_d-D(\ol{\Phi}_{h,\zeta}^m)^{-1}(\cdot,t)\|&\leq Ch,\\
\|D\ol{\Phi}_{h,\zeta}^m((\ol{\Phi}_{h,\zeta}^m)^{-1}(\cdot,t),t)-\I_d\|&\leq Ch.
\end{aligned}\label{Ch3}
\ee
Combining \eqref{Ch1} and \eqref{Ch3}, and using that $ \A_{h,\pm}^{m+1}\in\OO_{\pm}(n) $, we find
\begin{align*}
\sum_{j=1}^3|I_h^m(4,j,t)|
&\leq Ch\sum_{\zeta=\pm}\int_{\om_{\zeta}(t_{m+1})}\|\na\A_{h,\zeta}^{m+1}\|\ud x\\
&\leq Ch\left(\sum_{\zeta=\pm}\int_{\om_{\zeta}(t_{m+1})}\|\na\A_{h,\zeta}^{m+1}\|^2\ud x\right)^{\f12}\\
&\leq Ch\left(\sum_{\zeta=\pm}\int_{\om_{\zeta}(0)}\|\na\A_{0,\zeta}\|^2\ud x\right)^{\f12}.
\end{align*}
Here we used Cauchy's inequality in the second step and \eqref{Apriori2} in the third. Therefore, by \eqref{I4-I4},
\begin{align*}
\sum_{m=0}^{N-1}\int_{t_m}^{t_{m+1}}|I_h^m(4,t)-\wt{I}_h^m(4,t)|\ud t
&\leq \sum_{j=1}^3\sum_{m=0}^{N-1}\int_{t_m}^{t_{m+1}}|I_h^m(4,j,t)|\ud t\\
&\leq CTh\left(\sum_{\zeta=\pm}\int_{\om_{\zeta}(0)}\|\na\A_{0,\zeta}\|^2\ud x\right)^{\f12}\to 0
\end{align*}
as $ h\to 0^+ $. This proves \eqref{134} for $ j=4 $. Hence, the proof of \eqref{134} is complete, and therefore, \eqref{Claim4} follows.
\end{proof}

By combining the proofs of \eqref{Claim2} and \eqref{Claim4}, we obtain \eqref{Claim}. Hence, $ \A_{\pm} $ satisfies \eqref{WeakNeumann} for all admissible test functions. It remains to verify the initial condition \eqref{initial2}. As in the statement of \eqref{initial2}, we work on the fixed reference domains $ \om_{\zeta}(0) $. For $ t\in(0,T) $, let $ \Theta_{h,\zeta}(\cdot,t):\om_{\zeta}(t)\to\om_{\zeta}(0) $ be the discrete backward flow defined by
\[
\Theta_{h,\zeta}(\cdot,t)
:=
\Phi_{h,\zeta}^{0}(\cdot,t_1)\circ
\Phi_{h,\zeta}^{1}(\cdot,t_2)\circ\cdots\circ
\Phi_{h,\zeta}^{m-1}(\cdot,t_m)\circ
\Phi_{h,\zeta}^{m}(\cdot,t),
\qquad t\in(t_m,t_{m+1}],
\]
and set
\[
\wh{\A}_{h,\zeta}(x,t):=\wt{\A}_{h,\zeta}\(\Theta_{h,\zeta}^{-1}(x,t),t\),
\qquad x\in\om_{\zeta}(0).
\]
Then $ \wh{\A}_{h,\zeta}(\cdot,0)=\A_{0,\zeta} $. Fix $ t\in(0,T) $ and $ h>0 $, and let
$ m\in\Z\cap[0,N-1] $ be such that $ t\in(t_m,t_{m+1}] $. By the definition of $ \wt{\A}_h $, the change of variables, and \eqref{Apriori21}, we have
\begin{equation}\label{tm-new}
\begin{aligned}
&\sum_{\zeta=\pm}\int_{\om_{\zeta}(0)}
\|\wh{\A}_{h,\zeta}(\cdot,t)-\wh{\A}_{h,\zeta}(\cdot,t_m)\|^2\ud x\\
&\quad\leq C\(\sum_{\zeta=\pm}\int_{\om_{\zeta}(t_{m+1})}
\|\A_{h,\zeta}^{m+1}(\cdot)-\A_{h,\zeta}^m(\Phi_{h,\zeta}^m(\cdot,t_{m+1}))\|^2\ud x\)\\
&\quad\leq Ch\(\sum_{\zeta=\pm}\int_{\om_{\zeta}(0)}\|\na\A_{0,\zeta}\|^2\ud x\).
\end{aligned}
\end{equation}

Similarly, for each $ j\in\Z\cap[1,m] $,
\begin{equation}\label{jj1-new}
\sum_{\zeta=\pm}\int_{\om_{\zeta}(0)}
\|\wh{\A}_{h,\zeta}(\cdot,t_j)-\wh{\A}_{h,\zeta}(\cdot,t_{j-1})\|^2\ud x
\leq Ch\sum_{\zeta=\pm}\int_{\om_{\zeta}(0)}\|\na\A_{0,\zeta}\|^2\ud x.
\end{equation}

Therefore,
\[
\wh{\A}_{h,\zeta}(\cdot,t)-\A_{0,\zeta}
=
\wh{\A}_{h,\zeta}(\cdot,t)-\wh{\A}_{h,\zeta}(\cdot,t_m)
+\sum_{j=1}^{m}\(\wh{\A}_{h,\zeta}(\cdot,t_j)-\wh{\A}_{h,\zeta}(\cdot,t_{j-1})\).
\]
By Cauchy's inequality, \eqref{tm-new}, and \eqref{jj1-new}, it follows that
\begin{align*}
&\sum_{\zeta=\pm}\int_{\om_{\zeta}(0)}
\|\wh{\A}_{h,\zeta}(\cdot,t)-\A_{0,\zeta}\|^2\ud x\\
&\quad\leq C(m+1)\(\sum_{\zeta=\pm}\int_{\om_{\zeta}(0)}
\|\wh{\A}_{h,\zeta}(\cdot,t)-\wh{\A}_{h,\zeta}(\cdot,t_m)\|^2\ud x\)\\
&\qquad\quad
+C(m+1)\(\sum_{j=1}^{m}\sum_{\zeta=\pm}\int_{\om_{\zeta}(0)}
\|\wh{\A}_{h,\zeta}(\cdot,t_j)-\wh{\A}_{h,\zeta}(\cdot,t_{j-1})\|^2\ud x\)\\
&\quad\leq C(m+1)h\(\sum_{\zeta=\pm}\int_{\om_{\zeta}(0)}\|\na\A_{0,\zeta}\|^2\ud x\)\\
&\quad\leq C(t+h)\(\sum_{\zeta=\pm}\int_{\om_{\zeta}(0)}\|\na\A_{0,\zeta}\|^2\ud x\).
\end{align*}

Now let $ X_{\zeta}(\cdot,t):\om_{\zeta}(0)\to\om_{\zeta}(t) $ be the flow map used in the formulation of \eqref{initial2}, and define
\[
\wh{\A}_{\zeta}(x,t):=\A_{\zeta}(X_{\zeta}(x,t),t),
\qquad x\in\om_{\zeta}(0).
\]
Since $ \wt{\A}_h\to\A $ strongly in
$ L^2(0,T;L^2(\om_{\pm}(\cdot),\mathbb{M}_n)) $,
after pulling back to the fixed domains we also have
\[
\wh{\A}_h\to\wh{\A}
\quad\text{strongly in}\quad
L^2(0,T;L^2(\om_{\pm}(0),\mathbb{M}_n)).
\]
We may therefore let $ h\to0^+ $ and argue exactly as in the fixed-interface case to conclude that
\[
\lim_{t\to0^+}\sum_{\zeta=\pm}\int_{\om_{\zeta}(0)}
\|\wh{\A}_{\zeta}(\cdot,t)-\A_{0,\zeta}\|^2\ud x=0.
\]
This is precisely \eqref{initial2}.
\appendix

\section{Auxiliary geometric facts}

\subsection{Tangent and normal spaces of the manifold of orthogonal matrices}

We first recall the structure of the tangent and normal spaces of $ \OO(n) $. Let $ \A\in\OO(n) $. Then the tangent space of $ \OO(n) $ at $ \A $, denoted by $ T_{\A}\OO(n) $, is given by
\be
T_{\A}\OO(n)=\{\mathbf{X}\in\mathbb{M}_n:\mathbf{X}^{\T}\A+\A^{\T}\mathbf{X}=0\}.\label{TangentSpace}
\ee
Indeed, if $ \mathbf{X}\in T_{\A}\OO(n) $, then there exists a $ C^1 $ curve $ \al:[-\va,\va]\to\OO(n) $ such that $ \al(0)=\A $ and $ \al'(0)=\mathbf{X} $. Since $ \al^{\T}(t)\al(t)=\I $ for all $ t $, differentiation at $ t=0 $ gives
\[
(\al')^{\T}(0)\al(0)+\al^{\T}(0)\al'(0)=0,
\]
that is,
\be
\mathbf{X}^{\T}\A+\A^{\T}\mathbf{X}=0.\label{AXXA}
\ee
Conversely, let $ \mathbf{X}\in\mathbb{M}_n $ satisfy \eqref{AXXA}. Then $ \A^{\T}\mathbf{X}\in\mathbb{A}_n $. Define
\[
\al(t):=\A\exp(t\A^{\T}\mathbf{X}).
\]
Since $ \A^{\T}\mathbf{X} $ is skew-symmetric, $ \exp(t\A^{\T}\mathbf{X})\in\OO(n) $ holds for all $ t $, and hence $ \al(t)\in\OO(n) $. Moreover,
\[
\al'(0)=\A(\A^{\T}\mathbf{X})=\mathbf{X}.
\]
Therefore \eqref{TangentSpace} holds.

We next identify the normal space. Recall that
\[
\mathbb{S}_n:=\{\mathbf{B}\in\mathbb{M}_n:\mathbf{B}^{\T}=\mathbf{B}\}.
\]
Then
\be
N_{\A}\OO(n)=\{\mathbf{X}\in\mathbb{M}_n:\mathbf{X}=\mathbf{B}\A\text{ for some }\mathbf{B}\in\mathbb{S}_n\}.\label{NormalSpace}
\ee
To prove this, we first note that
\[
\dim T_{\A}\OO(n)=\frac{n(n-1)}{2},\quad \dim N_{\A}\OO(n)=\frac{n(n+1)}{2}.
\]
On the other hand,
\[
\dim\{\mathbf{B}\A:\mathbf{B}\in\mathbb{S}_n\}=\dim\mathbb{S}_n=\frac{n(n+1)}{2},
\]
since the map $ \mathbf{B}\mapsto \mathbf{B}\A $ is linear and injective.

It therefore remains to verify orthogonality. Let $ \mathbf{X}\in T_{\A}\OO(n) $ and let $ \mathbf{B}\in\mathbb{S}_n $. By \eqref{TangentSpace}, the matrix $ \A^{\T}\mathbf{X} $ is skew-symmetric. Hence
\[
\mathbf{X}:(\mathbf{B}\A)=\tr(\mathbf{X}^{\T}\mathbf{B}\A)
=\tr\((\A^{\T}\mathbf{X})^{\T}\mathbf{B}\)=0,
\]
because the Frobenius inner product between a skew-symmetric matrix and a symmetric matrix is zero. Since the two spaces have the correct dimensions, \eqref{NormalSpace} follows.

\subsection{Characterization of \eqref{NeumannJump}}

The next lemma shows that the weak transmission conditions imply the pointwise jump relation when the solution is smooth.

\begin{lem}\label{lemNeumann1}
Assume that $ \A $ is smooth and satisfies \eqref{HeatFlow}, \eqref{MinimalPair}, \eqref{WeakNeumann}, and \eqref{WeakNeumann1}. Then \eqref{NeumannJump} holds.
\end{lem}

\begin{proof}
We first derive an identity for $ \A_{\pm}^{\T}\pa_{\nu}\A_{\pm} $. Multiply \eqref{HeatFlow} on the left by $ \A_{\pm}^{\T} $ and test the resulting equation against an arbitrary function $ \Psi\in C_0^{\ift}(\om_{T,\pm}\cup\Ga_T,\mathbb{A}_n) $ satisfying \eqref{BoundaryTest}. Integration by parts yields
\begin{align*}
&\sum_{\zeta=\pm}\int_0^T\int_{\om_{\zeta}(t)}
\((\A_{\zeta}^{\T}\pa_t\A_{\zeta}):\Psi+(\A_{\zeta}^{\T}\na\A_{\zeta}):\na\Psi\)\ud x\ud t\\
&\quad-\int_0^T\int_{\Ga(t)}
(\A_+^{\T}\pa_{\nu}\A_+-\A_-^{\T}\pa_{\nu}\A_-):\Psi\ud\HH^{d-1}\ud t=0.
\end{align*}
By \eqref{WeakNeumann}, the bulk term vanishes. Therefore
\[
\int_0^T\int_{\Ga(t)}
(\A_+^{\T}\pa_{\nu}\A_+-\A_-^{\T}\pa_{\nu}\A_-):\Psi\ud\HH^{d-1}\ud t=0
\]
for all such $ \Psi $. Since differentiating $ \A_{\pm}^{\T}\A_{\pm}=\I $ in the normal direction gives
\[
(\pa_{\nu}\A_{\pm})^{\T}\A_{\pm}+\A_{\pm}^{\T}\pa_{\nu}\A_{\pm}=0,
\]
the matrices $ \A_{\pm}^{\T}\pa_{\nu}\A_{\pm} $ are symmetric skew-symmetric. Hence, the arbitrariness of $ \Psi $ implies
\be
\A_+^{\T}\pa_{\nu}\A_+=\A_-^{\T}\pa_{\nu}\A_-.\label{Neumproperty0}
\ee

Next, multiply \eqref{HeatFlow} on the right by $ \A_{\pm}^{\T} $ and test against $ \Psi\in C_0^{\ift}(\om_{T,\pm}\cup\Ga_T,\mathbb{A}_n) $ satisfying \eqref{BoundaryTest}. Integration by parts gives
\begin{align*}
&\sum_{\zeta=\pm}\int_0^T\int_{\om_{\zeta}(t)}
\((\pa_t\A_{\zeta}\A_{\zeta}^{\T}):\Psi+(\na\A_{\zeta}\A_{\zeta}^{\T}):\na\Psi\)\ud x\ud t\\
&\quad-\int_0^T\int_{\Ga(t)}
(\pa_{\nu}\A_+\A_+^{\T}-\pa_{\nu}\A_-\A_-^{\T}):\Psi\ud\HH^{d-1}\ud t=0.
\end{align*}
Using \eqref{WeakNeumann1}, we obtain
\be
\pa_{\nu}\A_+\A_+^{\T}=\pa_{\nu}\A_-\A_-^{\T}.\label{Neumproperty1}
\ee

We now prove that $ \pa_{\nu}\A_+=\pa_{\nu}\A_- $. Set
\[
\Lda:=\A_+^{\T}\pa_{\nu}\A_+=\A_-^{\T}\pa_{\nu}\A_-.
\]
By \eqref{MinimalPair}, there exists $ \n\in\Ss^{n-1} $ such that
\[
\A_+^{\T}\A_-=\I-2\n\otimes\n=:\mathbf{R}.
\]
Since \eqref{Neumproperty1} can be written as
\[
\A_+\Lda\A_+^{\T}=\A_-\Lda\A_-^{\T},
\]
multiplying by $ \A_+^{\T} $ from the left and by $ \A_+ $ from the right yields $ \Lda=\mathbf{R}\Lda\mathbf{R} $. Because $ \mathbf{R}^2=\I $, this is equivalent to $ \Lda \mathbf{R}=\mathbf{R}\Lda $. Thus, $ \Lda $ commutes with the reflection $ R $. Since the eigenspaces of $ R $ are $ \op{span}\{\n\} $ and $ \n^{\perp} $, the commutation relation implies that both subspaces are invariant under $ \Lda $. In particular, $ \Lda\n\in \op{Span}\{\n\} $. On the other hand, $ \Lda $ is skew-symmetric, so
\[
\n\cdot\Lda\n=0.
\]
Hence $ \Lda\n=0 $.

Finally, using $ \pa_{\nu}\A_-=\A_-\Lda $, we obtain
\[
\A_+^{\T}\pa_{\nu}\A_-=\A_+^{\T}\A_-\Lda=\mathbf{R}\Lda.
\]
Therefore
\[
\A_+^{\T}(\pa_{\nu}\A_+-\pa_{\nu}\A_-)=\Lda-\mathbf{R}\Lda=(\I-\mathbf{R})\Lda=2(\n\otimes\n)\Lda=0,
\]
because $ \Lda\n=0 $. Since $ \A_+ $ is invertible, it follows that $ \pa_{\nu}\A_+=\pa_{\nu}\A_- $. This is exactly \eqref{NeumannJump}.
\end{proof}

\section*{Acknowledgments}
The authors would like to thank Prof. Fanghua Lin for enlightening advice and helpful discussions.
The authors are also grateful to the referees for their valuable comments and suggestions.
This work is partially supported by the National Key R\&D Program of China under Grant 2023YFA1008801.
W. Wang (Zhejiang University) is partially supported by NSF of China under Grant No. 12271476.
Z. Zhang is partially supported NSF of China under Grant No. 12288101.

\bibliographystyle{plain}

\end{document}